\documentclass[11pt]{amsart}
\pdfoutput=1

\usepackage[T1]{fontenc}
\usepackage[utf8]{inputenc}
\usepackage{lmodern}
\usepackage{libertine}
\usepackage{microtype}
\usepackage{xcolor}
\usepackage[backend=biber, maxcitenames=2, maxbibnames=99]{biblatex}
\AtEveryBibitem{%
  \clearfield{urlday}\clearfield{urlmonth}\clearfield{urlyear}%
  \clearfield{pubstate}\clearfield{eventtitle}%
  \iffieldundef{doi}{}{\clearfield{url}\clearfield{eprint}%
    \clearfield{eprinttype}\clearfield{eprintclass}}%
}
\usepackage[a4paper,width=16cm,left=2.5cm,top=3cm,bottom=3cm]{geometry}
\usepackage{enumitem}
\usepackage{amsmath, amsthm, amssymb, amsfonts, amsopn}
\usepackage{mathtools}
\usepackage{mathrsfs}
\usepackage{graphicx}
\usepackage{booktabs}
\usepackage{siunitx}
\usepackage{pifont}
\usepackage{tikz}
\usetikzlibrary{arrows.meta,positioning,calc,decorations.pathreplacing}
\newcommand{\cmark}{\ding{51}}
\newcommand{\xmark}{\ding{55}}

\theoremstyle{plain}
\newtheorem{thm}{Theorem}[section]

\newtheorem{lem}[thm]{Lemma}
\newtheorem{prop}[thm]{Proposition}

\theoremstyle{remark}
\newtheorem{rmk}[thm]{Remark}

\theoremstyle{definition}
\newtheorem{defn}[thm]{Definition}
\newtheorem{ass}{Assumption}

\newtheorem{ex}[thm]{Example}

\DeclareMathOperator{\im}{im}
\DeclareMathOperator{\col}{col}
\DeclareMathOperator{\spann}{span}
\newcommand{\cspan}{\mathop{\overline{\spann}}\nolimits}
\usepackage[foot]{amsaddr}
\usepackage[pdftex, colorlinks=true, bookmarksopen=true]{hyperref}
\usepackage{orcidlink}
\hypersetup{
  pdftitle={Data-Driven Control in Infinite-Dimensional Spaces: Fundamental Lemma and Applications},
  pdfauthor={Daniel L\'opez-Montero}
}

\def\RR {{\mathfrak R}}
\def\CC {{\mathbb C}}
\def\N {{\mathbb N}}
\def\R {{\mathbb R}}

\def\d{{\rm d}}
\def\e{{\rm e}}

\newcommand{\qand}{{\quad\text{and}\quad}}
\newcommand{\angl}[2]{\langle #1,#2\rangle}
\newcommand{\DD}{\mathscr D}

\def\eps{\varepsilon}

\title{Data-Driven Control in Infinite-Dimensional Spaces: Fundamental Lemma and Applications}
\author[D. L\'opez-Montero]{Daniel L\'opez-Montero\textsuperscript{1}~\orcidlink{0009-0004-3565-3615}}
\address{\textsuperscript{1}Chair for Dynamics, Control, Machine Learning, and Numerics (Alexander von Humboldt Professorship), Department of Mathematics, Friedrich-Alexander-Universit\"at Erlangen-N\"urnberg, 91058 Erlangen, Germany}

\date{\today}

\begin{document}

\begin{abstract}
Data-driven control of dynamical systems has attracted significant research interest. This paper establishes a theoretical foundation for data-driven control in infinite-dimensional spaces by extending the notions of \emph{persistency of excitation}, \emph{data informativity}, and Willems' fundamental lemma. These extensions enable their direct application to systems governed by partial differential equations and delay differential equations.
\end{abstract}

\keywords{Willems' fundamental lemma, persistency of excitation, infinite-dimensional systems, continuous-time systems, data-driven control}
\subjclass[2020]{93B05, 93B07, 93C05}

\maketitle

\section{Introduction}

In system identification and control, the design of the probing input is critical for ensuring that the underlying dynamics and properties are identifiable. This motivated the classical notion of \emph{persistently exciting} inputs: signals that excite all modes of the system~\parencite{astromNumericalIdentificationLinear1966, ljungSystemIdentification1998}. Willems' fundamental lemma made this precise for discrete-time linear systems, showing that a persistently exciting trajectory parametrizes the entire system behavior~\parencite{willemsNotePersistencyExcitation2005}.
Continuous-time counterparts have appeared only recently for
finite-dimensional linear systems~\parencite{rapisardaPersistencyExcitationCondition2023, schmitzContinuoustimeFundamentalLemma2024, wakaikiDataDrivenControlContinuousTime2026, schmitzDatadrivenContinuoustimeOptimal2026}.
The extension to infinite-dimensional systems, such as \emph{linear
partial differential equations (PDEs)} and \emph{delay differential equations
(DDEs)}, has by contrast received little attention; see~\parencite{kergusDataDrivenControlInfinite2021,
goseaLoewnerDatadrivenControl2021,wakaikiDataInformativityStabilization2026, pillaiBehavioralApproachControl1999}. 
In that setting, a standard state-space realization takes
the form
\begin{equation}\label{eq:hilbert-control-formal}
  \begin{aligned}
    \dot x(t)&=Ax(t)+Bu(t), & x(0)&=x_0,\\
    y(t)&=Cx(t),
  \end{aligned}
\end{equation}
where $X$, $U$, $Y$ are Hilbert spaces, $x(t)\in X$, $u(t)\in U$,
$y(t)\in Y$, the operator $A$ is the infinitesimal generator of a strongly continuous
semigroup $S=(S(t))_{t\ge0}$, and $B$ and $C$
may be unbounded on $X$. Following \textcite{pritchardLinearQuadraticControl1987}, we accommodate
them through a triple of Hilbert spaces
\begin{equation}\label{eq:intro-WXV}
  W \hookrightarrow X \hookrightarrow V,
\end{equation}
with continuous, dense injections, so that $B$ becomes bounded into the coarser space,
$B\in\mathcal L(U,V)$, and $C$ bounded on the finer one,
$C\in\mathcal L(W,Y)$. We also assume that $S(t)$ restricts to a strongly
continuous semigroup on $W$ and extends to one on $V$. Throughout, the operators
$A$, $B$, and $C$ are assumed to be \emph{unknown}. 

The following three
examples, introduced by \textcite{pritchardLinearQuadraticControl1987}, motivate the theory.

\begin{ex}[Heat equation, Neumann control]\label{ex:intro-pde}
  On the interval $(0,1)$, consider
  \[
    \begin{cases}
      \partial_t x(t,\xi)=\nu\,\partial_{\xi\xi}x(t,\xi),
        & \partial_\xi x(t,0)=u(t),\quad \partial_\xi x(t,1)=0,\\
      y(t)=x(t,\xi_0),
    \end{cases}
  \]
  with diffusivity $\nu>0$ and state space $X=L^2(0,1)$. Control enters through a Neumann
  boundary condition and the output samples the state at a point
  $\xi_0\in(0,1)$, so both $B$ and $C$ are unbounded on $X$.
\end{ex}

\begin{ex}[Wave equation, Dirichlet control]\label{ex:intro-wave}
  On the interval $(0,1)$, consider
  \[
    \begin{cases}
      \partial_{tt}w(t,\xi)=c^2\,\partial_{\xi\xi}w(t,\xi),
        & w(t,0)=u(t),\quad w(t,1)=0,\\
      y(t)=\displaystyle\int_0^1 q(\xi)w(t,\xi)\,\d\xi,
    \end{cases}
  \]
  with wave speed $c>0$. It can be written as a
  first-order system \eqref{eq:hilbert-control-formal} for
  $x=(w,\partial_t w)$ on
  $X=L^2(0,1)\times H^{-1}(0,1)$. Dirichlet boundary control makes $B$
  unbounded on $X$, while the weighted integral observation defined by $q\in L^2(0,1)$ is bounded on $X$.
\end{ex}

\begin{ex}[Delay system with delayed output]\label{ex:intro-delay}
  For a delay $h>0$, let $x_t(\tau):=x(t+\tau)$,
  $\tau\in[-h,0]$, and consider the delay system
  \[
    \begin{cases}
      \displaystyle\frac{\d}{\d t}\bigl(x(t)-Mx_t\bigr)=Lx_t+B_0u(t),\\
      y(t)=Cx_t,
    \end{cases}
  \]
  where $x(t)\in\R^n$ and $B_0\in\R^{n\times m}$, while $L$, $M$, and $C$ are
  bounded linear operators mapping $C([-h,0];\R^n)$ into $\R^n$, $\R^n$, and
  $\R^p$ respectively. We assume that the measure representing $M$ has no
  mass at $0$. Its state
  $(x(t)-Mx_t,x_t)$ belongs to
  $X=\R^n\times L^2(-h,0;\R^n)$.
  The control operator is bounded, whereas a discrete output delay generally
  makes $C$ unbounded on $X$.
\end{ex}

\paragraph{Contributions.} In this paper, we lay the foundations for data-driven control of infinite-dimensional linear systems:
\begin{enumerate}[label=\textup{(\roman*)},leftmargin=*]
    \item \emph{Informativity and persistency of excitation (PE).} We extend
    persistency of excitation and data informativity to infinite dimensions,
    and prove that PE of infinite order is necessary, but---unlike in finite
    dimensions---not sufficient, for an input to be universally informative. To
    close the gap we introduce \emph{harmonic persistency of excitation}, a
    frequency-domain condition that does imply informativity for analytic
    semigroups.

    \item \emph{Continuous-time and infinite-dimensional fundamental lemma.} We extend Willems' fundamental lemma to semigroup dynamics, with applications to linear PDEs and DDEs.
    \item \emph{Data-driven controllability test.} We give tests for approximate controllability of an infinite-dimensional system from a single trajectory.
   
    \item \emph{Data-driven linear--quadratic regulator (LQR).}
    Building on the linear--quadratic theory for infinite-dimensional
    systems~\parencite{pritchardLinearQuadraticControl1987} and on data-driven
    LQR~\parencite{schmitzContinuoustimeFundamentalLemma2024}, we derive an
    exact data-based representation and discretization.
\end{enumerate}

Figure~\ref{fig:roadmap} summarizes the logical structure of the paper.
Section~\ref{sec:setting} fixes the standing assumptions. The excitation conditions of
Section~\ref{sec:informativity} then supply the informativity hypotheses of the
fundamental lemma of Section~\ref{sec:fundamental-lemma}, on which the two
applications of Sections~\ref{sec:controllability} and~\ref{sec:lqr} rest.
Section~\ref{sec:numerics} reports the numerical experiments.

\begin{figure}[!ht]
  \centering
%
\begin{tikzpicture}[
  font=\scriptsize,
  bx/.style   ={draw,rounded corners=2pt,align=center,
                inner xsep=3pt,inner ysep=2pt,
                text width=30mm,minimum height=8.5mm,line width=0.5pt},
  main/.style ={bx,fill=black!5},
  pe/.style   ={bx,dashed,fill=none},
  gain/.style ={align=center,inner xsep=3pt,inner ysep=2pt,text width=40mm,
                anchor=north,font=\scriptsize},
  im/.style   ={-{Latex[length=1.7mm,width=1.5mm]},line width=0.6pt},
  eq/.style   ={{Latex[length=1.7mm,width=1.5mm]}-{Latex[length=1.7mm,width=1.5mm]},
                line width=0.6pt},
  lbl/.style  ={font=\tiny,inner sep=1.6pt},
]
\hyphenpenalty=10000\exhyphenpenalty=10000\relax

\node[pe]   (hpe)  at (0  , 1.6) {$\bar u$ harmonically PE};
\node[pe]   (pein) at (4.6, 1.6) {$\bar u$ PE of\\infinite order};

\node[main] (ac)   at (0  , 0  ) {$(A,B)$ approx.\\controllable};
\node[main] (inf)  at (4.6, 0  ) {$(\bar u,\bar x)$ informative
};
\node[main] (swi)  at (9.2, 0  ) {$(\bar u,\bar x)$ window\\informative};

\node[main,text width=76mm] (wfl) at (6.9,-1.55)
      {Willems' fundamental lemma};

\node[gain] (fl)   at (4.6,-2.475)
      {Data-driven LQR\\(Thms~\ref{thm:lqr-data} and~\ref{thm:lqr-data-io})};
\node[gain] (ioc)  at (9.2,-2.475)
      {Data-driven controllability\\(Thms~\ref{thm:data-fattorini-hautus}
       and~\ref{thm:io-window-controllability})};

\draw[im] (hpe) -- node[lbl,above]{Lem~\ref{lem:harmonic-pe-implies-pe}} (pein);
\draw[im,dashed] (pein) -- (inf)
      node[pos=0.5,fill=white,inner sep=1pt,font=\normalsize]{\xmark}
      node[lbl,pos=0.5,right=4pt]{Prop~\ref{prop:infinite-pe-insufficient}};
\draw[im,dashed] ($(hpe.south east)-(0.15,0)$) -- ($(inf.north west)+(0.15,0)$)
      node[pos=0.5,fill=white,inner sep=1pt,font=\normalsize]{\cmark};

\draw[eq] (ac)  -- node[lbl,above]{Thm~\ref{thm:analytic-universal-sufficiency}} (inf);
\draw[eq] (inf.east)
       -- node[lbl,above]{Thm~\ref{thm:analytic-universal-sufficiency}}
          (swi.west);

\draw[im] (inf.south) -- node[lbl,left]{Thm~\ref{thm:willems-gramian}}
          (inf.south |- wfl.north);
\draw[im] (swi.south) -- node[lbl,left]{Thm~\ref{thm:windowed-io-fundamental-lemma}}
          (swi.south |- wfl.north);

\draw[im] (fl.north  |- wfl.south) -- (fl.north);
\draw[im] (ioc.north |- wfl.south) -- (ioc.north);

\end{tikzpicture}
  \caption{Logical structure of the paper.}
  \label{fig:roadmap}
\end{figure}

\paragraph{Notation.}
Fix $T>0$ and set $I:=[0,T]$.
All Hilbert spaces are real unless explicitly complexified.
For Hilbert spaces
$H$ and $K$, $\mathcal L(H,K)$ denotes the space of bounded
linear operators from $H$ to $K$. We
write $\angl{\cdot}{\cdot}_H$ and $\|\cdot\|_H$ for the inner product and norm
of $H$, and $\im L$ and $\ker L$ for the range and kernel of an operator $L$.
For a real Hilbert space $H$, its complexification is denoted by $H_{\CC}$ and
$\angl{\cdot}{\cdot}_H$ denotes the complex-bilinear extension of the real
inner product.
We identify $X$ with its dual and denote by $W'$ and $V'$ the duals of $W$
and $V$ with respect to the pivot space $X$, so that
$V'\hookrightarrow X\hookrightarrow W'$ continuously and densely. The
duality pairing between a space $E$ and its pivot dual $E'$ is denoted by
$\angl{\cdot}{\cdot}_{E,E'}$.
Accordingly, $B^\star\in\mathcal L(V',U)$ and
$C^\star\in\mathcal L(Y,W')$ denote the adjoints of $B$ and $C$.
The space of smooth test functions compactly supported in the interior of $I$
is denoted by $\DD(I)$, its distributional dual by $\DD'(I)$, and, for a Hilbert
space $E$, the space of $E$-valued distributions by $\DD'(I;E)$. We write $D$
for the distributional time derivative.
For a signal $z\in L^2(I;H)$, we write $\cspan z$ for the smallest closed
subspace of $H$ containing $z(t)$ for a.e.\ $t\in I$.

\section{Setting and standing assumptions}
\label{sec:setting}

The following assumptions, adopted from
\textcite{pritchardLinearQuadraticControl1987}, specify the admissibility of
$B$ and $C$ and the compatibility of the state spaces.

\begin{ass}
  \label{hyp:H1}
  There exists $b>0$ such that, for every $u\in L^2(0,T;U)$,
  \[
    \int_0^T S(T-r)Bu(r)\,\d r\in W
    \qand
    \Big\|\int_0^T S(T-r)Bu(r)\,\d r\Big\|_{W}\le b\,\|u\|_{L^2(0,T;U)}\,.
  \]
\end{ass}

\begin{ass}
  \label{hyp:H2}
  There exists $c>0$ such that, for every $x\in W$,
  \[
    \|CS(\cdot)x\|_{L^2(0,T;Y)}\le c\,\|x\|_{V}.
  \]
\end{ass}

\begin{ass}\label{hyp:compat}
  The domain $Z:=\mathcal D_V(A)$ of the generator on $V$ embeds continuously
  and densely into $W$, where $Z$ carries the graph norm.
\end{ass}

Assumption~\ref{hyp:H1} ensures that the control convolution takes values in
$W$. Thus, for $x_0\in X$ and $u\in L^2(I;U)$, the system
\eqref{eq:hilbert-control-formal} has a \emph{mild solution}
$x\in C(I;X)$ given by
\begin{subequations}\label{eq:hilbert-mild}
\begin{align}
x(t;x_0,u)
&= S(t)x_0 + L_tu,
\qquad
L_tu := \int_0^t S(t-r)Bu(r)\,\d r,
\label{eq:hilbert-mild-state}\\
\intertext{If, in addition, $x_0\in W$, then $x\in C(I;W)$ and the
associated output is}
y(t)
&= CS(t)x_0 + CL_tu.
\label{eq:hilbert-mild-output}
\end{align}
\end{subequations}
In this case, $y\in C(I;Y)$.

Assumption~\ref{hyp:H2} also extends the free output $CS(\cdot)x_0$ by density
from $W$ to $V$, and hence in particular to $X$, as an element of $L^2(I;Y)$.
Together with the forced response, this yields the two solution maps used
throughout---the input-to-output map $\mathcal F_\tau$ and the
\emph{observability operator} $\mathcal O_\tau$---and the shift identity
relating a record to its windows.

\begin{lem}\label{lem:finite-horizon-output}
  Under Assumptions~\ref{hyp:H1} and~\ref{hyp:H2}, for every $\tau>0$, the operators
  \[
    \begin{aligned}
      \mathcal F_\tau: L^2(0,\tau;U)&\to L^2(0,\tau;Y),
      &\qquad
      \mathcal O_\tau: X&\to L^2(0,\tau;Y),\\
      u&\mapsto CL_{(\cdot)}u,
      &
      x_0&\mapsto CS(\cdot)x_0,
    \end{aligned}
  \]
  are well-defined, linear and bounded.
  The output generated by $(x_0,u)$ is
  $y=\mathcal O_\tau x_0+\mathcal F_\tau u$.
  Moreover, for $0\le s\le s+\rho\le\tau$, one has
  $y(s+\cdot)=\mathcal O_\rho x(s)+\mathcal F_\rho u(s+\cdot)$ in
  $L^2(0,\rho;Y)$.
\end{lem}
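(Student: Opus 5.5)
I would treat the two operators separately, then combine them via linearity and the semigroup property.

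\emph{The operator $\mathcal O_\tau$.} Assumption~\ref{hyp:H2} is stated on $[0,T]$, so I first move it to an arbitrary horizon $\tau$.
For $\tau\le T$ it restricts directly. For $\tau>T$, I split $[0,\tau]$ into intervals of length at most $T$. On the interval starting at $kT$ I write $CS(kT+r)x=CS(r)S(kT)x$, so that
\[
\|CS(kT+\cdot)x\|_{L^2(0,T;Y)}\le c\,\|S(kT)x\|_V\le cM_V\e^{\omega_V kT}\|x\|_V .
\]
Here $M_V,\omega_V$ are growth constants of the extended semigroup on $V$, and $S(kT)$ maps $W$ into $W$ because the semigroup restricts to $W$. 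Summing over the finitely many pieces gives $\|\mathcal O_\tau x\|\le c_\tau\|x\|_V$ on $W$. Since $W$ is dense in $V$ and $\|x\|_V\lesssim\|x\|_X$, $\mathcal O_\tau$ extends uniquely to a bounded operator from $X$ (indeed from $V$) into $L^2(0,\tau;Y)$.

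\emph{The operator $\mathcal F_\tau$.} By Assumption~\ref{hyp:H1}, rescaled to horizon $t$, $L_t\in\mathcal L(L^2(0,t;U),W)$ for each $t$. Rescaling needs care for $t<T$: I pad $u$ with zeros at the beginning, so that $L_tu=L_T\tilde u$ with $\tilde u(r)=0$ for $r<T-t$ and $\tilde u(r)=u(r-T+t)$ otherwise. For $t>T$, I use $L_{t}u=S(t-T)L_Tu_1+L_{t-T}u_2$ and induct. In both cases $\|L_tu\|_W\le b_\tau\|u\|_{L^2(0,\tau;U)}$, uniformly for $t\le\tau$.

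The main obstacle is measurability and boundedness of $t\mapsto CL_tu$. I would handle it by density. For smooth $u$ (or $u$ in $H^1$ with $u(0)=0$), $L_\cdot u\in C([0,\tau];W)$ by standard semigroup regularity on $V$ together with Assumption~\ref{hyp:compat} ($L_tu\in Z\hookrightarrow W$). Then $CL_\cdot u$ is continuous with $\sup_t\|CL_tu\|_Y\le\|C\|b_\tau\|u\|$. Hence $\|CL_\cdot u\|_{L^2(0,\tau;Y)}\le\sqrt\tau\,\|C\|\,b_\tau\|u\|$, and the estimate extends by density to all of $L^2(0,\tau;U)$. Alternatively, the paper's stated claim that $x\in C(I;W)$ already gives continuity directly.

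\emph{Output formula and shift identity.} For $x_0\in W$ the output formula $y=\mathcal O_\tau x_0+\mathcal F_\tau u$ is exactly \eqref{eq:hilbert-mild-output}. For $x_0\in X$ it is taken as the definition, consistent by continuity. For the shift identity, the semigroup property gives
\[
x(s+r)=S(r)x(s)+\int_0^rS(r-\sigma)Bu(s+\sigma)\,\d\sigma=S(r)x(s)+L_r\bigl(u(s+\cdot)\bigr).
\]
Applying $C$ yields the identity when $x_0\in W$, since then $x(s)\in W$. For general $x_0\in X$, I approximate $x_0$ by $x_0^n\in W$ and use the boundedness of $\mathcal O_\tau$, $\mathcal O_\rho$, and $S(s)$ on $X$ to pass to the limit in $L^2(0,\rho;Y)$.
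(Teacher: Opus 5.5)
Your overall structure is the paper's argument. You pad the input to move Assumption~\ref{hyp:H1} to horizons $t<T$, split $[0,\tau]$ into pieces of length at most $T$ using the semigroup on $W$ and $V$, extend $\mathcal O_\tau$ from $W$ by density, and pass the output formula and shift identity from $W$ to $X$ by approximation. The gap is in the step you yourself flag as the main obstacle: continuity, and hence measurability, of $t\mapsto CL_tu$.

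Your primary argument gets $L_\cdot u\in C([0,\tau];W)$ for smooth $u$ from classical-solution regularity in $\mathcal D_V(A)$ combined with Assumption~\ref{hyp:compat}. The lemma, however, assumes only Assumptions~\ref{hyp:H1} and~\ref{hyp:H2}. It is also used later where Assumption~\ref{hyp:compat} is not available, for instance in Lemma~\ref{lem:window-implies-informative} (under Assumption~\ref{hyp:H1} alone) and Theorem~\ref{thm:windowed-io-fundamental-lemma}. Your fallback, the claim that $x\in C(I;W)$, is circular: the text asserts it before the lemma, but this lemma's proof is exactly where it is established.

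The missing idea is that Assumption~\ref{hyp:H1} alone already gives continuity, by reading your padding identity as a translation. Extend $u$ by zero to the left and write $L_tu=L_\tau u_t$ with $u_t(r):=u(t-\tau+r)$ for $0<r<\tau$. The map $t\mapsto u_t$ is continuous into $L^2(0,\tau;U)$ by continuity of translations. Your splitting bound makes $L_\tau\in\mathcal L(L^2(0,\tau;U),W)$. Together these give $L_\cdot u\in C([0,\tau];W)$ for every $u\in L^2(0,\tau;U)$, with no smooth approximation and no extra hypothesis. Then $CL_\cdot u\in C([0,\tau];Y)$ and $\|\mathcal F_\tau u\|_{L^2(0,\tau;Y)}\le\sqrt\tau\,\|C\|_{\mathcal L(W,Y)}\,b_\tau\|u\|_{L^2(0,\tau;U)}$. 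The rest of your proof then goes through unchanged. Your density scheme itself is sound, since uniform-in-$t$ limits of $L_\cdot u_n$ in $W$ preserve continuity; only its smooth base case needed the unavailable assumption.
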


The proof is given in Appendix~\ref{app:finite-horizon-output}. The
observability operator maps an initial state to the unforced part of the output.

In expressions involving $\rho(A)$ or $\mathcal D(A^\star)$, $A$ denotes
the generator on $X$. Assumption~\ref{hyp:H1} induces a compatible control
operator $B_X\in\mathcal L(U,X_{-1})$, where $X_{-1}$ is the extrapolation
space of $X$, with the same control convolution. We also write
$B^\star:\mathcal D(A^\star)\to U$ for its adjoint; it agrees with the
previously defined adjoint on their common domain
$\mathcal D(A^\star)\cap V'$. The construction is given in
Appendix~\ref{app:control-realization}.
Writing $A_V$ and $A_{X,-1}$ for the generators on $V$ and $X_{-1}$,
respectively, the compatible realizations satisfy, for large real $r$,
\[
  Q_r:=(r\operatorname{id}-A_V)^{-1}B
      =(r\operatorname{id}-A_{X,-1})^{-1}B_X\in\mathcal L(U,W),
  \qquad \|Q_r\|_{\mathcal L(U,W)}=O(r^{-1/2}).
\]
This decay follows from Assumption~\ref{hyp:H1}; the estimate
\eqref{eq:control-resolvent-decay} in the appendix supplies the details.

\subsection{Controllability and observability}

In the infinite-dimensional setting, controllability and observability each admit two distinct formulations, \emph{exact} and \emph{approximate}~\parencite{tucsnakObservationControlOperator2009,boyerControllabilityLinearParabolic2020,trelatControlFiniteInfinite2024}.

\begin{defn}[Controllability]\label{def:controllability}
  For $x_0\in X$, let $\RR_T(x_0):=\{x(T;x_0,u)\;:\;u\in L^2(I;U)\}$ denote the
  set of states reachable at time $T$ from $x_0$, so that $\RR_T(0)=\im L_T$. The pair $(A,B)$ is
  \emph{exactly controllable in time $T$} if $\RR_T(0)=X$, and
  \emph{approximately controllable in time $T$} if
  $\overline{\RR_T(0)}^{\,X}=X$.
\end{defn}

\begin{defn}[Observability]\label{def:observability}
  The pair $(A,C)$ is \emph{approximately observable in time $T$} if
  $\ker\mathcal O_T=\{0\}$, and \emph{exactly observable in time $T$} if,
  for some $c_T>0$,
  \begin{equation*}
    \|\mathcal O_Tx\|_{L^2(0,T;Y)}\ge c_T\|x\|_X,
    \qquad x\in X.
  \end{equation*}
\end{defn}

\section{Informativity and persistency of excitation}
\label{sec:informativity}

Everything that follows rests on a single measured record being rich enough to
stand in for the model. This section makes that requirement precise and then
addresses the design question it raises: \emph{how should the probing input be
chosen so that the resulting record is informative?}

\subsection{Informative data}
\label{subsec:informativity}

The notion of \emph{data informativity} has generalized Willems'
fundamental lemma into a framework for quantifying the information content of a
dataset---covering identifiability, controllability, stabilizability, and
dissipativity, among others~\parencite{vanwaardeDataInformativityNew2020,
eisingDataInformativityObservability2020, steentjesDataDrivenControlInformativity2022,
vanwaardeInformativityApproachDataDriven2023}. We formulate informativity as the
requirement that the measured signal explore every direction of the signal space.

\begin{defn}[Informative data]\label{def:hilbert-informative}
  Let $H$ be a Hilbert space embedded continuously and densely in $X$. A pair
  $(\bar u,\bar x)\in L^2(I;U\times H)$ is
  \emph{informative in $U\times H$} if
  \begin{equation}\label{eq:informative-span}
    \cspan(\bar u,\bar x)=U\times H.
  \end{equation}
  Equivalently, if $(\alpha,\beta)\in U\times H$ satisfies
  $\angl{\alpha}{\bar u(t)}_U+\angl{\beta}{\bar x(t)}_H=0$ for a.e.\ $t\in I$,
  then $(\alpha,\beta)=0$.
\end{defn}

Condition \eqref{eq:informative-span} has an equivalent operator-theoretic
form, which is the one we work with throughout. For $z\in L^2(I;H)$, define
\begin{equation}\label{eq:hilbert-gramian}
  \mathcal G(z)
  :=
  \int_0^T z(t)\otimes z(t)\,\d t
  \ \in\mathcal L(H),
\end{equation}
where $a\otimes b$ denotes the rank-one operator
$\zeta\mapsto\angl{b}{\zeta}_H\,a$ on $H$. Then $\mathcal G(z)$ is
self-adjoint, nonnegative, and trace-class, and its kernel collects the vectors
orthogonal to $z(t)$ for a.e.\ $t\in I$,
\begin{equation}\label{eq:gramian-kernel}
  \ker\mathcal G(z)=(\cspan z)^{\perp}.
\end{equation}
Applied on the product space $U\times H$ to $z=(\bar u,\bar x)$, this
characterizes informativity.
\begin{lem}[Gramian characterization of informative data]\label{lem:informative-gramian}
  A pair $(\bar u,\bar x)\in L^2(I;U\times H)$ is informative in $U\times H$ if
  and only if $\mathcal G(\bar u,\bar x)$ is injective.
\end{lem}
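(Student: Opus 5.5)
The plan is to reduce the statement to the kernel identity \eqref{eq:gramian-kernel}, applied on the product Hilbert space $H_0:=U\times H$ with inner product $\angl{(\alpha,\beta)}{(a,b)}_{H_0}=\angl{\alpha}{a}_U+\angl{\beta}{b}_H$ and to $z:=(\bar u,\bar x)\in L^2(I;H_0)$. Since \eqref{eq:gramian-kernel} is only asserted in the text, I will also verify it directly.

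First, $\mathcal G(z)$ is well defined as a Bochner integral in $\mathcal L(H_0)$: the map $t\mapsto z(t)\otimes z(t)$ is strongly measurable and $\|z(t)\otimes z(t)\|=\|z(t)\|_{H_0}^2$, which is integrable because $z\in L^2$. For $\zeta\in H_0$, pulling the bounded functional $\angl{\zeta}{\cdot}_{H_0}$ inside the integral gives
\[
  \angl{\zeta}{\mathcal G(z)\zeta}_{H_0}=\int_0^T\angl{z(t)}{\zeta}_{H_0}^2\,\d t .
\]
If $\mathcal G(z)\zeta=0$, the left side vanishes, so $\angl{z(t)}{\zeta}_{H_0}=0$ for a.e.\ $t$. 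Conversely, if this orthogonality holds a.e., then $\mathcal G(z)\zeta=\int_0^T\angl{z(t)}{\zeta}_{H_0}\,z(t)\,\d t=0$. Thus $\ker\mathcal G(z)=\{\zeta:\angl{z(t)}{\zeta}_{H_0}=0\text{ for a.e. }t\}$.

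Next, I identify this set with $(\cspan z)^\perp$, which is the only mildly delicate point. Choose a representative of $z$ and a null set $N$ such that $z(t)\in\cspan z$ for $t\notin N$, using the definition of $\cspan z$. If $\zeta\perp\cspan z$, then $\zeta\perp z(t)$ for $t\notin N$. Conversely, if $\zeta\perp z(t)$ for a.e.\ $t$, then the closed subspace $\{\zeta\}^\perp$ contains $z(t)$ for a.e.\ $t$, so by minimality $\cspan z\subseteq\{\zeta\}^\perp$. Hence $\ker\mathcal G(z)=(\cspan z)^\perp$. This also settles the ``equivalently'' clause of Definition~\ref{def:hilbert-informative}, because $\angl{z(t)}{(\alpha,\beta)}_{H_0}$ is exactly $\angl{\alpha}{\bar u(t)}_U+\angl{\beta}{\bar x(t)}_H$.

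Finally, for a closed subspace $M$ of a Hilbert space, $M=H_0$ if and only if $M^\perp=\{0\}$. Therefore $\cspan(\bar u,\bar x)=U\times H$ if and only if $\ker\mathcal G(\bar u,\bar x)=\{0\}$, that is, if and only if $\mathcal G(\bar u,\bar x)$ is injective. I expect no real obstacle here. The only care needed is the measure-theoretic handling of ``a.e.\ $t$'' (fixing a representative and a single null set) and justifying the interchange of the inner product with the Bochner integral, which holds because bounded functionals commute with Bochner integrals.
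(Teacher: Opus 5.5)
Your proposal is correct and follows essentially the same route as the paper: the quadratic-form identity $\angl{\mathcal G(z)\zeta}{\zeta}=\int_0^T|\angl{z(t)}{\zeta}|^2\,\d t$ gives the kernel identity $\ker\mathcal G(z)=(\cspan z)^\perp$, applied on $U\times H$ with $z=(\bar u,\bar x)$, and informativity then holds exactly when this closed subspace has trivial orthogonal complement. You additionally make explicit two points the paper leaves implicit, namely the converse inclusion via $\mathcal G(z)\zeta=\int_0^T\angl{z(t)}{\zeta}\,z(t)\,\d t$ and the identification of the a.e.-annihilator set with $(\cspan z)^\perp$ by minimality; the paper instead also records trace-class convergence, which injectivity does not require.
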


The proof is given in Appendix~\ref{app:informative-gramian}.

Unless a space is specified, \emph{informative} means informative in
$U\times X$. This is the relevant notion when no
output is recorded, or when $C$ is bounded. Otherwise, we
use $U\times W$, the reason being that, by \eqref{eq:hilbert-mild-output}, the
output is defined pointwise only for states in $W$.
Since $W\hookrightarrow X$ continuously and densely, informativity in $U\times W$ implies informativity in $U\times X$; the converse fails in general.

\begin{rmk}\label{rmk:gramian-not-coercive}
When $U\times X$ is finite-dimensional, injectivity is equivalent to coercivity, $\mathcal G(\bar u,\bar x)\succ0$, recovering the Gramian condition of the finite-dimensional theory for discrete-time \parencite{vanwaardeDataInformativityNew2020} and continuous-time systems \parencite{schmitzContinuoustimeFundamentalLemma2024}.
\end{rmk}

\subsection{Window informativity}
\label{subsec:window-informativity}

Data informativity imposes a pointwise constraint: the values
$(\bar u,\bar x)(t)$ must span a dense subspace of $U\times X$. When only
outputs are measured this is not enough, because infinite-dimensional states
generally cannot be recovered pointwise; the state information is instead
carried by an entire input--output window. The trajectories that such a record
makes available are the superpositions of its shifted windows, and these must
now be total in $L^2(0,T;U)\times X$.

\begin{defn}[Window synthesis and informativity]\label{def:window-informative}
  Let $(\bar u,\bar x)$ be an input--state record on $[0,T+T']$. For
  $\phi\in L^2(0,T')$, define the \emph{window synthesis operator}
  \[
    \mathcal Z_{T,T'}^{\bar u,\bar x}\phi
      :=\int_0^{T'}\phi(s)(\bar u(s+\cdot),\bar x(s))\,\d s
      \ \in L^2(0,T;U)\times X.
  \]
  The record $(\bar u,\bar x)$ is \emph{window informative} if 
  $\overline{\im\mathcal Z_{T,T'}^{\bar u,\bar x}}=L^2(0,T;U)\times X$.
\end{defn}

Because it requires the shifted windows to be total rather than just constraining the pointwise values, window informativity is a stronger notion than data informativity.

\begin{lem}
  \label{lem:window-implies-informative}
  Suppose that Assumption~\ref{hyp:H1} holds, and let $(\bar u,\bar x)$ be the
  record on $[0,T+T']$ generated by $(\bar x_0,\bar u)\in X\times
  L^2(0,T+T';U)$. If $(\bar u,\bar x)$ is window informative, then it is
  informative on $(0,T+T')$.
\end{lem}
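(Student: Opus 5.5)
The plan is to argue by duality. Suppose $(\alpha,\beta)\in U\times X$ annihilates the record, i.e.
\[
  \angl{\alpha}{\bar u(t)}_U+\angl{\beta}{\bar x(t)}_X=0\qquad\text{for a.e. } t\in(0,T+T').
\]
From this I will build a bounded linear functional $\ell$ on $L^2(0,T;U)\times X$ that vanishes on $\im\mathcal Z_{T,T'}^{\bar u,\bar x}$. Window informativity then forces $\ell\equiv0$, and evaluating $\ell$ on well-chosen test elements will give $\alpha=0$ and $\beta=0$. This is exactly the equivalent form of informativity in Definition~\ref{def:hilbert-informative}, with $H=X$.

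\textbf{Construction of $\ell$.} Fix $\varepsilon\in(0,T]$ and let $g:=\mathbf 1_{[0,\varepsilon]}$. For $(v,\xi)\in L^2(0,T;U)\times X$ set
\[
  \ell_\varepsilon(v,\xi):=\int_0^T g(r)\Bigl(\angl{\alpha}{v(r)}_U+\angl{\beta}{S(r)\xi+L_rv}_X\Bigr)\,\d r ,
\]
that is, I pair $(\alpha,\beta)$ with the input and the mild state $x(r;\xi,v)$ of \eqref{eq:hilbert-mild-state}.

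\textbf{Boundedness.} $\ell_\varepsilon$ is bounded because $\sup_{r\le T}\|S(r)\|<\infty$, and Assumption~\ref{hyp:H1} (applied on $[0,r]$, with $W\hookrightarrow X$ continuously) gives $\|L_r v\|_X\lesssim\|v\|_{L^2(0,T;U)}$ uniformly in $r\in[0,T]$. This uniformity is the one point needing care. I would justify it via the standard fact that admissibility on $[0,T]$ implies admissibility on every shorter interval with a constant independent of the length, obtained by zero-padding the input at the start. It is also presumably among the estimates used in Lemma~\ref{lem:finite-horizon-output}.

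\textbf{Vanishing on the image.} Take $\phi\in L^2(0,T')$. Since $\ell_\varepsilon$ is bounded, it commutes with the Bochner integral defining $\mathcal Z_{T,T'}^{\bar u,\bar x}\phi$. By the semigroup/cocycle identity $x(r;\bar x(s),\bar u(s+\cdot))=\bar x(s+r)$, which is the state-level shift property behind Lemma~\ref{lem:finite-horizon-output}, we obtain
\[
  \ell_\varepsilon(\mathcal Z_{T,T'}^{\bar u,\bar x}\phi)=\int_0^{T'}\!\phi(s)\int_0^T g(r)\bigl(\angl{\alpha}{\bar u(s+r)}_U+\angl{\beta}{\bar x(s+r)}_X\bigr)\,\d r\,\d s .
\]
The inner integrand vanishes for a.e.\ $(s,r)$: the set $\{(s,r): s+r\in N\}$, with $N$ null, is null in the plane by Fubini. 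Hence $\ell_\varepsilon=0$ on $\im\mathcal Z$, and by window informativity and continuity, $\ell_\varepsilon\equiv0$ on $L^2(0,T;U)\times X$.

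\textbf{Extracting $\alpha$ and $\beta$.} First choose $v=0$, so that $\varepsilon^{-1}\int_0^\varepsilon\angl{\beta}{S(r)\xi}_X\,\d r=0$ for every $\xi\in X$. Letting $\varepsilon\to0$ and using strong continuity of $S$ gives $\angl{\beta}{\xi}_X=0$ for all $\xi$, hence $\beta=0$. With $\beta=0$ and $\xi=0$, we get $\int_0^\varepsilon\angl{\alpha}{v(r)}_U\,\d r=0$ for all $v$. Taking $v\equiv\alpha$ on $[0,\varepsilon]$ gives $\varepsilon\|\alpha\|_U^2=0$, so $\alpha=0$.

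The only step I expect to need genuine checking is the uniform-in-$r$ bound on $L_r$ together with the interchange of $\ell_\varepsilon$ with the synthesis integral; everything else is bookkeeping.
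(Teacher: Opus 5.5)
Your proposal is correct and is essentially the paper's proof. Like the paper, you pair the annihilator $(\alpha,\beta)$ with the input and with the mild state $S(r)\xi+L_rv$, weighted by a time profile. The shift identity shows that this functional kills every window, window informativity forces it to vanish, and you then read off $\beta=0$ from the free response near $r=0$ and $\alpha=0$ from the input term.

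The differences are cosmetic. The paper uses an arbitrary weight $\phi\in L^2(0,T)$ and writes the functional through its Riesz representative $(\phi\alpha+g_\phi,\xi_\phi)\in\ker(\mathcal Z_{T,T'}^{\bar u,\bar x})^\star$, whereas you use $\phi=\mathbf 1_{[0,\varepsilon]}$ and let $\varepsilon\to0$. Your zero-padding justification of the uniform bound on $L_r$ is the same argument the paper uses in the proof of Lemma~\ref{lem:finite-horizon-output}.
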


The proof is given in Appendix~\ref{app:window-implies-informative}.

The converse fails, because window informativity
requires the shifted inputs to be total in the
infinite-dimensional space $L^2(0,T;U)$, which no condition posed pointwise in
$U\times X$ can deliver.

\begin{ex}\label{ex:window-strictly-stronger}
  Let $U=X=\ell^2(\N)$ with orthonormal basis $(e_n)_{n\ge1}$, and let $A=0$ and
  $B=\operatorname{id}$, so that $(A,B)$ is exactly controllable in every time.
  Take $\bar x_0=0$ and
  \begin{equation}\label{eq:window-strictly-stronger-input}
    \bar u(t):=\sum_{n\ge1}2^{-n}\sin(nt)\,e_n .
  \end{equation}
  Then $(\bar u,\bar x)$ is informative on every interval, but
  it is window informative for no $T,T'>0$.
\end{ex}

The verification is given in
Appendix~\ref{app:window-strictly-stronger}. Theorem~\ref{thm:analytic-universal-sufficiency}
below shows that the two notions coincide for analytic semigroups driven by a
harmonic input.

\subsection{Persistently exciting inputs}
\label{subsec:classical-pe}

Having made informativity precise, we return to the design question
posed at the start of this section.

In the finite-dimensional setting, the classical answer is that the input must be
\emph{persistently exciting} of sufficiently high order, that is, rich enough to
excite all modes of the system. In discrete time, this amounts to a full-rank condition~\parencite{willemsNotePersistencyExcitation2005}, while in continuous time it amounts to an independence condition on the derivatives of the input~\parencite{schmitzContinuoustimeFundamentalLemma2024}.
We first show that this classical notion, when suitably extended, remains necessary for an input to be universally informative in infinite dimensions. However, while classical excitation is sufficient in finite dimensions, it fails to guarantee informativity in infinite-dimensional systems. This limitation motivates a new stronger condition which we call \emph{harmonic persistency}.

To formalize the continuous-time classical condition, we first define the derivative moments. For $\bar u\in L^2(I;U)$ and $L\ge1$, define
\[
  \langle D^k\bar u,\phi\rangle_{\DD',\DD}
  :=(-1)^k\int_0^T\bar u(t)\,\phi^{(k)}(t)\,\d t\in U,
  \qquad k=0,\dots,L-1,\quad \phi\in\DD(I).
\]
Placing derivatives on the test function requires no regularity of $\bar u$. 

\begin{defn}
  \label{def:PE}
  Let $\bar u\in L^2(I;U)$. We say that $\bar u$ is
  \emph{persistently exciting of order $L$} if whenever
  $\zeta_0,\dots,\zeta_{L-1}\in U$ satisfy
  \[
    \sum_{k=0}^{L-1}\angl{\zeta_k}{D^k\bar u}_U=0
    \quad\text{in }\DD'(I),
    \qquad\text{then}\qquad \zeta_0=\cdots=\zeta_{L-1}=0.
  \]
  We say that $\bar u$ is \emph{persistently exciting of infinite order} if it
  is persistently exciting of every finite order $L\ge1$.
\end{defn}

\begin{rmk}
  When the input is sufficiently regular, $\bar u\in H^{L-1}(I;U)$, one has
  $D^k\bar u=\bar u^{(k)}$ for $k\le L-1$, and persistency of excitation of order
  $L$ is equivalent to the injectivity of the Gramian
  $\mathcal G(\bar u,\dot{\bar u},\dots,\bar u^{(L-1)})$ on $U^L$. In finite dimensions
  this amounts to the classical coercivity condition
  $\mathcal G(\bar u,\dot{\bar u},\dots,\bar u^{(L-1)})\succ0$ of
  \textcite[Def.~16]{schmitzContinuoustimeFundamentalLemma2024}.
\end{rmk}

Before addressing sufficiency, we establish that persistency of excitation is
necessary for an input to be \emph{universally informative}, that is, to produce
an informative trajectory for every system in the class; following
\textcite{shakouriNewPerspectiveWillems2025,shakouriExperimentDesignUsing2025},
such an input is called \emph{universal}.

\begin{prop}\label{prop:hilbert-pe-necessary}
  Let $X$ be a separable Hilbert space, let $N$ denote the dimension of $X$,
  and let $\bar u\in L^2(I;U)$.
  Suppose that, for every approximately controllable pair $(A,B)$ on $X$
  satisfying Assumption~\ref{hyp:H1} and every initial state $x_0\in X$,
  the trajectory $\bar x=x(\cdot;x_0,\bar u)$ makes $(\bar u,\bar x)$
  informative. Then, if $N<\infty$, $\bar u$ is
  persistently exciting of order $N+1$, whereas if $N=\infty$,
  $\bar u$ is persistently exciting of infinite order.
\end{prop}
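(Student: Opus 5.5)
We argue by contraposition. Suppose $\bar u$ is \emph{not} persistently exciting of some order $L$, where $L\le N+1$ if $N<\infty$ and $L$ is arbitrary finite if $N=\infty$. Then there are $\zeta_0,\dots,\zeta_{L-1}\in U$, not all zero, with $\sum_k\angl{\zeta_k}{D^k\bar u}_U=0$ in $\DD'(I)$. Let $P(s):=\sum_{k}\zeta_k s^k$ be the associated $U$-valued polynomial, so the relation reads $\angl{P(D)}{\bar u}=0$ (scalar). We may take $L$ minimal; if $L=1$, then $\angl{\zeta_0}{\bar u(t)}_U=0$ a.e., so $(\zeta_0,0)$ already violates informativity for \emph{every} system and we are done. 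Hence assume $L\ge2$. The plan is to build an approximately controllable pair $(A,B)$ on $X$ satisfying Assumption~\ref{hyp:H1}, and an initial state $x_0$, such that some nonzero $(\alpha,\beta)\in U\times X$ annihilates $(\bar u,\bar x)(t)$ for a.e.\ $t$. This contradicts Definition~\ref{def:hilbert-informative}.

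\textbf{Finite block realizing $P$.} Set $n:=L-1\le N$ and split $X=\R^n\oplus X'$ (with $X'=\{0\}$ if $N=n$). Pick $\lambda\in\R$ with $P(\lambda)\neq0$; this is possible since $P\not\equiv0$ and some scalar component has finitely many roots. Expand $P(s)=\sum_{j=0}^{n}\eta_j(s-\lambda)^j$ with $\eta_j\in U$ and $\eta_0=P(\lambda)\ne0$. On $\R^n$ take the Jordan chain
\[
\dot z_1=\lambda z_1+\angl{\eta_0}{u}_U,\qquad
\dot z_j=\lambda z_j+z_{j-1}+\angl{\eta_{j-1}}{u}_U,\quad j=2,\dots,n.
\]
Formally $z_n=(D-\lambda)^{-n}\bigl(P(D)-\eta_n(D-\lambda)^n\bigr)u$. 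Making this rigorous by integrating distributionally, the relation $\angl{P(D)}{\bar u}=0$ yields
\[
\angl{\eta_n}{\bar u(t)}_U+z_n(t)=p(t)\,\e^{\lambda t},
\]
where $p$ is a polynomial of degree $\le n-1$ that depends affinely on $z(0)$. The map $z(0)\mapsto p$ is onto the $n$-dimensional polynomial space: it is triangular in the Jordan structure. So we can choose $z(0)$ such that $p\equiv0$. Then $(\alpha,\beta)=(\eta_n,e_n\oplus0)$ is nonzero, since $e_n\neq0$, and annihilates the data. The Jordan block with input row $\eta_0^\star\neq0$ entering the top of the chain is controllable by the Kalman/Hautus test.

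\textbf{Completing to an approximately controllable system.} On $X'$ (if nontrivial) take a diagonal, bounded $A'=\mathrm{diag}(\mu_m)$ in an orthonormal basis $(f_m)$. The $\mu_m$ are distinct, different from $\lambda$, and bounded. Take $B'u:=\angl{\omega}{u}_U\,b'$ for a fixed unit vector $\omega\in U$ and $b'=\sum_m c_m f_m$ with all $c_m\ne0$ and $(c_m)\in\ell^2$. Here $B'$ is bounded, so Assumption~\ref{hyp:H1} holds with $W=X$ (or with the given triple, since everything is bounded). Approximate controllability of the block-diagonal pair follows from the Hautus/Fattorini eigenvector criterion: the spectra are disjoint, every eigenvector of $A^\star$ lies in one block, and it is not annihilated by $B^\star$ because $c_m\ne0$, respectively $\eta_0\ne0$. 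The $X'$ initial state is arbitrary, say $0$. It does not enter $(\alpha,\beta)$.

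\textbf{Main obstacle.} The delicate step is the distributional integration: showing that $\angl{\eta_n}{\bar u}+z_n$ equals exactly $p(t)\e^{\lambda t}$ using only $\bar u\in L^2$, and that $z(0)$ can cancel $p$. I would handle it by conjugating with $\e^{-\lambda t}$, which reduces to $\lambda=0$ and the chain of pure integrators. There I would use that a distribution with $D^{n}T=0$ on an interval is a polynomial of degree $<n$, and compute the polynomial contributed by each $z_j(0)$ explicitly to see that the map onto polynomials is upper-triangular with nonzero diagonal.
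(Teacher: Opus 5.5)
Your finite obstruction block is correct, and it is a genuine variant of the paper's. The paper picks $r$ distinct reals $\lambda_n\notin Z$, builds a diagonal block with $B^\star e_n=\Psi(\lambda_n)/p'(\lambda_n)$, and uses the partial-fraction identity $p\,\Xi=\Psi$. You pick a single $\lambda$ with $P(\lambda)\neq0$ and a Jordan chain driven by the Taylor coefficients $\eta_j=P^{(j)}(\lambda)/j!$, which is the confluent realization of the same rational function $P(s)/(s-\lambda)^n$. The computation checks out. First, $(D-\lambda)^n\bigl(\angl{\eta_n}{\bar u}_U+z_n\bigr)=\sum_{j=0}^{n}\angl{\eta_j}{(D-\lambda)^j\bar u}_U=0$. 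Second, the free part $e^{\lambda t}\sum_i z_i(0)\,t^{n-i}/(n-i)!$ sweeps out all of $\{e^{\lambda t}q(t):\deg q\le n-1\}$. Third, the only eigenvector $e_1$ of $A_J^\star$ has $B_J^\star e_1=\eta_0\neq0$. Your version needs only one non-root of $P$; the paper's diagonal choice is what lets its block sit inside a self-adjoint generator.

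The gap is in the completion when $N=\infty$. You take $A'=\operatorname{diag}(\mu_m)$ \emph{bounded} on the infinite-dimensional $X'$ and then invoke the Fattorini--Hautus eigenvector criterion. Proposition~\ref{prop:fattorini-hautus}, however, assumes that $A^\star$ has compact resolvent, and a bounded operator on an infinite-dimensional space never does: otherwise $\operatorname{id}=(\mu-A)(\mu-A)^{-1}$ would be compact. Without such a hypothesis, the eigenvector test alone does not imply approximate controllability in infinite dimensions. Bounded diagonal operators whose nonreal eigenvalues accumulate at a finite point admit absolutely convergent sums $\sum_m a_m e^{\mu_m t}\equiv0$ with every $a_m\neq0$. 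Such pairs pass the test but are not approximately controllable. Approximate controllability of the constructed pair is exactly what the contrapositive must deliver, so this step is unproved as written. The same applies to your case $L=1$, which tacitly needs at least one admissible approximately controllable pair on $X$.

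The repair is easy, and there are two routes.
\begin{enumerate}[label=\textup{(\alph*)},leftmargin=*]
\item Do what the paper does and let the added eigenvalues run to $-\infty$, say $\mu_m:=\lambda-m$. Then $A=A_J\oplus A'$ is bounded above and generates an analytic semigroup, and $A^\star$ has compact resolvent and a complete system of root vectors. $B$ is still bounded, so Assumption~\ref{hyp:H1} holds trivially with $W=V=X$ and $B^\star$ is graph-bounded. Your eigenvector check then applies verbatim.
\item Keep the $\mu_m$ real and bounded and argue directly. The function $t\mapsto\angl{\eta}{S(t)Bv}_X=\angl{\omega}{v}_U\sum_m\eta_mc_me^{\mu_mt}+e^{\lambda t}q_v(t)$ is entire, so vanishing on $[0,T]$ forces vanishing on $i\R$. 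There it is the Fourier transform of the compactly supported distribution $\angl{\omega}{v}_U\sum_m\eta_mc_m\delta_{\mu_m}+\sum_k\gamma_k\delta_\lambda^{(k)}$, which therefore vanishes. Its restriction to $\R\setminus\{\lambda\}$ is a measure that must be zero, so $\eta_mc_m=0$ for every $m$. Controllability of the Jordan block then gives $\eta_J=0$.
\end{enumerate}

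Finally, drop the parenthetical about a ``given triple.'' Bounded $A$ and $B$ on $X$ do not place $L_Tu$ in a prescribed $W\subsetneq X$. You simply choose $W=V=X$, which the statement permits; the paper likewise chooses its own triple, $W=\mathcal D(A)$ and $V=X$.
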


The proof is given in Appendix~\ref{app:pe-necessity}.


\begin{lem}
  \label{lem:window-pe-necessary}
  If $(\bar u,\bar x)$ is window informative on $(0,T+T')$, then $\bar u$ is
  persistently exciting of infinite order on $(0,T+T')$.
\end{lem}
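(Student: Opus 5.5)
The plan is to argue by contraposition: if $\bar u$ fails to be persistently exciting of some finite order $L$ on $(0,T+T')$, I will construct a nonzero element of $L^2(0,T;U)\times X$ orthogonal to $\im\mathcal Z_{T,T'}^{\bar u,\bar x}$. This contradicts window informativity, since a subspace with dense closure has trivial orthogonal complement. The state component plays no role, so the annihilator will have the form $(\psi,0)$.

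\textbf{Construction of the annihilator.} Suppose $\zeta_0,\dots,\zeta_{L-1}\in U$, not all zero, satisfy $\sum_{k=0}^{L-1}\angl{\zeta_k}{D^k\bar u}_U=0$ in $\DD'(0,T+T')$. Fix a nonzero real test function $\chi\in\DD(0,T)$ and set
\[
  \psi(t):=\sum_{k=0}^{L-1}(-1)^k\chi^{(k)}(t)\,\zeta_k\in L^2(0,T;U).
\]
For $\phi\in L^2(0,T')$, Fubini gives
\[
  \angl{\mathcal Z_{T,T'}^{\bar u,\bar x}\phi}{(\psi,0)}
  =\int_0^{T'}\phi(s)\int_0^T\angl{\psi(t)}{\bar u(s+t)}_U\,\d t\,\d s .
\]
Substituting $r=s+t$, the inner integral equals $\sum_k(-1)^k\int\angl{\zeta_k}{\bar u(r)}_U\,\chi_s^{(k)}(r)\,\d r$ with $\chi_s:=\chi(\cdot-s)$. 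By the definition of $D^k$ this is exactly $\bigl(\sum_k\angl{\zeta_k}{D^k\bar u}_U\bigr)(\chi_s)$.

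For $s\in[0,T']$, the support of $\chi_s$ is compact in $(s,s+T)\subset(0,T+T')$. Hence $\chi_s\in\DD(0,T+T')$, and the pairing vanishes for every $s$. Therefore $(\psi,0)\perp\im\mathcal Z_{T,T'}^{\bar u,\bar x}$.

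\textbf{Nontriviality of $\psi$ (the main step).} It remains to show that $\psi\neq0$. Let $K$ be the largest index with $\zeta_K\neq0$. If $\psi=0$, pairing with $\zeta_K$ gives the scalar identity
\[
  \sum_{j=0}^{K}(-1)^j\angl{\zeta_j}{\zeta_K}_U\,\chi^{(j)}=0 .
\]
This is a linear constant-coefficient ODE of order $K$ with leading coefficient $\|\zeta_K\|_U^2\neq0$. The function $\chi$ and all its derivatives vanish near $0$, so uniqueness for the initial value problem forces $\chi\equiv0$. This contradicts the choice of $\chi$; when $K=0$ the identity reduces directly to $\|\zeta_0\|^2\chi=0$.

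Consequently $(\psi,0)$ is a nonzero vector orthogonal to the image, so the closure of $\im\mathcal Z_{T,T'}^{\bar u,\bar x}$ is a proper subspace. This contradicts window informativity. Since $L$ was arbitrary, $\bar u$ is persistently exciting of every finite order, hence of infinite order. No assumption on $(A,B)$ is needed.
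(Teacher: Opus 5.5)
Your proof is correct and follows essentially the same route as the paper, which also tests the failing distributional relation against translates of a fixed test function to produce the nonzero annihilator $\sum_k(-1)^k\psi^{(k)}\zeta_k$ of the input windows. The paper phrases the contradiction through totality of the input windows (the projection of window informativity onto $L^2(0,T;U)$) rather than through $(\psi,0)\in(\im\mathcal Z_{T,T'}^{\bar u,\bar x})^\perp$. Your nontriviality step, pairing with $\zeta_K$ to get a scalar ODE with leading coefficient $(-1)^K\|\zeta_K\|_U^2$, makes explicit the reduction to a scalar ODE that the paper's analyticity argument leaves implicit.
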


The proof is given in Appendix~\ref{app:window-pe-necessity}.

\subsection{The infinite-dimensional obstacle}
\label{subsec:pe-obstacle}

In finite dimensions, the necessary condition of
Proposition~\ref{prop:hilbert-pe-necessary} is also sufficient.

\begin{prop}
  \label{prop:finite-pe-sufficient}
  Suppose that $\dim X=N<\infty$ and that $(A,B)$ is controllable.
  If $\bar u$ is persistently exciting of order $N+1$, then
  $(\bar u,\bar x)$ is informative.
\end{prop}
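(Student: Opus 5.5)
We argue by contraposition, in the style of the continuous-time fundamental lemma of \textcite{schmitzContinuoustimeFundamentalLemma2024}. Since $U\times X$ is finite-dimensional, we may take $U=\R^m$ and $X=\R^N$. Assumption~\ref{hyp:H1} is then automatic, and $\bar x(t)=\e^{tA}x_0+\int_0^t \e^{(t-r)A}B\bar u(r)\,\d r$ is absolutely continuous with $\dot{\bar x}=A\bar x+B\bar u$ a.e. Suppose $(\alpha,\beta)\in\R^m\times\R^N$ satisfies
\[
\alpha^\top\bar u(t)+\beta^\top\bar x(t)=0 \qquad\text{for a.e.\ } t\in I .
\]
The goal is to show $(\alpha,\beta)=0$, using that $\bar u$ is persistently exciting of order $N+1$ (Definition~\ref{def:PE}).

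The key step is to differentiate the annihilation identity in $\DD'(I)$. Since $\beta^\top\bar x$ is absolutely continuous, $D(\beta^\top\bar x)=\beta^\top A\bar x+\beta^\top B\bar u$. By induction, for $k=0,\dots,N$,
\[
0=D^k\bigl(\alpha^\top\bar u+\beta^\top\bar x\bigr)
=\alpha^\top D^k\bar u+\beta^\top A^k\bar x+\sum_{j=0}^{k-1}\beta^\top A^{k-1-j}B\,D^j\bar u
\qquad\text{in }\DD'(I).
\]
Here only $\bar x$ is differentiated classically; the input terms are left as distributional derivatives, so no regularity of $\bar u$ is needed. By the Cayley--Hamilton theorem, $A^N=-\sum_{k=0}^{N-1}c_kA^k$, where $\chi_A(s)=s^N+\sum_{k<N} c_k s^k$. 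Take the combination of these identities with weights $c_0,\dots,c_{N-1},1$, i.e.\ apply $\chi_A(D)$. The state terms cancel exactly, leaving a relation $\sum_{k=0}^{N}\langle\zeta_k,D^k\bar u\rangle=0$ in $\DD'(I)$ whose coefficients $\zeta_k\in\R^m$ are explicit. Persistency of excitation of order $N+1$ then forces $\zeta_0=\dots=\zeta_N=0$.

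It remains to read off $(\alpha,\beta)=0$ from these coefficients. The top coefficient is $\zeta_N=\alpha$, so $\alpha=0$. With $\alpha=0$, the coefficient of $D^j\bar u$ is
\[
\zeta_j=B^\top\Bigl(\sum_{k=j+1}^{N}c_k\,(A^\top)^{k-1-j}\Bigr)\beta,\qquad c_N:=1 .
\]
Descending from $j=N-1$ gives $B^\top\beta=0$. Then $j=N-2$ gives $B^\top(A^\top+c_{N-1})\beta=0$, hence $B^\top A^\top\beta=0$, and so on. This triangular recursion yields $B^\top(A^\top)^i\beta=0$ for $i=0,\dots,N-1$. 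Controllability of $(A,B)$, via the Kalman rank condition, then gives $\beta=0$.

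The main obstacle is bookkeeping rather than analysis. One must justify the distributional product rule $D(\beta^\top\bar x)=\beta^\top(A\bar x+B\bar u)$ for mild solutions with merely $L^2$ inputs; this holds by absolute continuity in finite dimensions. One must also verify that the triangular structure of the coefficients in the descending recursion is correct.
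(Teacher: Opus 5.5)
Your proof is correct and is essentially the paper's: your $k$-fold differentiated identity is the paper's recursion for $s_k:=\angl{(A^\star)^k\beta}{\bar x}_X$, after which Cayley--Hamilton eliminates the state, persistency of excitation of order $N+1$ kills every coefficient, and reading the coefficients from the top down gives $\alpha=0$ and $B^\star(A^\star)^i\beta=0$ for $i<N$, so controllability forces $\beta=0$. One cosmetic point: the hypothesis makes only $X$ finite-dimensional, so your reduction to $U=\R^m$ is not justified, but your argument never uses it; the same computation goes through verbatim with $\angl{\alpha}{\bar u}_U$ and $B^\star$ in place of $\alpha^\top\bar u$ and $B^\top$.
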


This is a distributional version of
\parencite[Prop.~21]{schmitzContinuoustimeFundamentalLemma2024}; the proof is given
in Appendix~\ref{app:finite-pe-sufficiency}. 
Crucially, the proof relies on
the Cayley--Hamilton theorem to close the recursion after finitely many steps.
This argument has no infinite-dimensional analogue, and the following counterexample shows that the condition is not sufficient in infinite dimensions.

\begin{prop}
  \label{prop:infinite-pe-insufficient}
  Let $X=\ell^2(\N)$ with orthonormal basis $(e_n)_{n\ge1}$, let $U=\R$, and set
  \begin{equation}\label{eq:pe-counterexample}
    A_0e_n:=-ne_n,
    \qquad
    b:=(2^{-n})_{n\ge1},
    \qquad
    Bv:=bv,
    \qquad
    A:=A_0+b\otimes b,
  \end{equation}
  with $\mathcal D(A)=\mathcal D(A_0)$. Then $(A,B)$ is approximately
  controllable and the input
  \begin{equation}\label{eq:pe-counterexample-input}
    \bar u(t):=-\sum_{n\ge1}4^{-n}e^{-nt}
  \end{equation}
  is persistently exciting of infinite order. Yet, if $\bar x$ is the trajectory
  it generates from $\bar x_0=b$, the record $(\bar u,\bar x)$ is not
  informative.
\end{prop}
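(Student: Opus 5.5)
The counterexample rests on a cancellation, so I would start by guessing the trajectory explicitly. Let $S_0$ be the diagonal semigroup generated by $A_0$, $S_0(t)e_n=e^{-nt}e_n$. My candidate is
\[
  \bar x(t):=S_0(t)b=\sum_{n\ge1}2^{-n}e^{-nt}e_n .
\]
For this candidate, $\angl{b}{\bar x(t)}_X=\sum_{n\ge1}4^{-n}e^{-nt}=-\bar u(t)$. Hence the feedback term and the input cancel:
\[
  (b\otimes b)\bar x(t)+B\bar u(t)=b\bigl(\angl{b}{\bar x(t)}+\bar u(t)\bigr)=0 .
\]
Formally, then, $\dot{\bar x}=A_0\bar x=A\bar x+B\bar u$ with $\bar x(0)=b$.

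To make this rigorous I would use the mild formulation. The operator $A=A_0+b\otimes b$ is a bounded perturbation of $A_0$, and $B$ is bounded, so Assumption~\ref{hyp:H1} holds with $W=X$. The mild solution of $(A,B)$ coincides with the mild solution of $\dot x=A_0x+b(\angl{b}{x}+\bar u)$ via the variation-of-constants formula for bounded perturbations. Uniqueness of that fixed point, combined with the cancellation above, gives $\bar x=S_0(\cdot)b$.

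Non-informativity is then immediate. With $(\alpha,\beta)=(1,b)\neq0$ we have $\alpha\bar u(t)+\angl{\beta}{\bar x(t)}_X=0$ for every $t$, so Definition~\ref{def:hilbert-informative} fails. Equivalently, by Lemma~\ref{lem:informative-gramian}, $(1,b)\in\ker\mathcal G(\bar u,\bar x)$.

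For approximate controllability I would argue in two steps.
\begin{itemize}
  \item \emph{Feedback invariance.} Write $A=A_0+BF$ with $F=\angl{b}{\cdot}$ bounded. Any trajectory of $(A,B)$ with input $u$ is a trajectory of $(A_0,B)$ with input $u+Fx\in L^2$, and conversely, so $\RR_T(0)$ is the same set for both pairs.
  \item \emph{Controllability of $(A_0,b)$.} If $z\perp\RR_T(0)$, then $\angl{z}{S_0(t)b}=\sum_n z_n2^{-n}e^{-nt}=0$ for all $t\in[0,T]$. This function is real-analytic for $t>-\log 2$, so it vanishes for all $t>0$. Multiplying by $e^{t}$ and letting $t\to\infty$ gives $z_1=0$, and inductively $z_n=0$ for all $n$. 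Hence $z=0$.
\end{itemize}

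For persistency of excitation of infinite order, first note that $\bar u$ extends to a real-analytic function on $(-\log 4,\infty)$. Its termwise derivative series $\sum_n 4^{-n}n^ke^{-nt}$ converges uniformly on $[-\delta,\infty)$ for small $\delta>0$. So on $\operatorname{int} I$ the distributional derivatives $D^k\bar u$ are the classical ones, and $U=\R$ makes the $\zeta_k$ scalars. Suppose $\sum_{k<L}\zeta_kD^k\bar u=0$ in $\DD'(I)$, and set $p(\lambda)=\sum_{k<L}\zeta_k\lambda^k$. The relation then reads
\[
  p(D)\bar u(t)=-\sum_{n\ge1}4^{-n}p(-n)e^{-nt}=0\qquad\text{on }(0,T).
\]
By analyticity this identity holds on all of $(0,\infty)$. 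The same Dirichlet-series uniqueness argument as above (multiply by $e^{nt}$, let $t\to\infty$, induct on $n$) gives $p(-n)=0$ for every $n\ge1$. A polynomial with infinitely many roots is zero, so $\zeta_0=\dots=\zeta_{L-1}=0$.

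The only delicate points are the two justifications: uniqueness of the mild solution under the bounded perturbation, and termwise differentiation together with analytic continuation off $I$. Both are routine given the exponential decay of the coefficients. The key idea is the exact cancellation $\angl{b}{S_0(t)b}=-\bar u(t)$, which is built into the choice of $\bar u$.
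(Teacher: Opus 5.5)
Your proof is correct. The non-informativity and excitation parts agree in substance with the paper's argument. The paper also uses $\bar x=e^{tA_0}b$ and the annihilator $(1,b)$. For excitation it passes to $F(q)=\sum_{n\ge1}4^{-n}p(-n)q^n$, which is holomorphic on $\{|q|<4\}$ and vanishes on $(e^{-T},1)$; this is your Dirichlet-series uniqueness written in the variable $q=e^{-t}$. For the trajectory, the paper uses $b\in\bigcap_k\mathcal D(A_0^k)$ to obtain a classical solution of $\dot{\bar x}=A_0\bar x=A\bar x+B\bar u$. You instead use uniqueness for the perturbed variation-of-constants equation; both work.

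The genuine difference is approximate controllability. The paper verifies the hypotheses of Proposition~\ref{prop:fattorini-hautus}: $A$ is self-adjoint with compact resolvent, $\angl{x}{Ax}_X\le-\tfrac23\|x\|_X^2$ makes the semigroup analytic, the eigenbasis is complete, and $B^\star$ is bounded. It then observes that any $\eta$ with $A^\star\eta=\lambda\eta$ and $B^\star\eta=0$ satisfies $A_0\eta=\lambda\eta$. Hence $\eta=ce_n$, contradicting $B^\star e_n=2^{-n}\ne0$.

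Your argument is the time-domain counterpart of the same feedback invariance. You show that $\RR_T(0)$ is unchanged under $A_0\mapsto A_0+BF$, and you treat the diagonal pair $(A_0,b)$ with the same analytic-uniqueness device you use for excitation. Your route is more elementary and self-contained: it needs no spectral theory, no analyticity, and no external theorem, and it gives controllability in every time $T$ directly.

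What the paper's route buys in addition is the check that $A$ generates an analytic, indeed exponentially stable, semigroup. This places the counterexample inside the class covered by Theorem~\ref{thm:analytic-universal-sufficiency}. That is not part of the statement, but it is what lets the example show that infinite-order excitation fails exactly where harmonic excitation succeeds.
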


The verification is given in
Appendix~\ref{app:infinite-pe-insufficient}.
The construction is a hidden feedback: $\bar u=-\angl{b}{\bar x}_X$ is the
static feedback that the rank-one perturbation $b\otimes b$ cancels, so
$\bar x$ evolves under the unperturbed $A_0$ and the record stays, for all
time, on the graph of that feedback. The obstruction is therefore a relation
\emph{between} the input and the state, which no condition on the derivatives
of $\bar u$ alone can see. In finite dimensions, Cayley--Hamilton closes the
recursion of Proposition~\ref{prop:finite-pe-sufficient} after $N$ steps and
rules such a relation out; here the recursion never closes, and the infinitely
many constraints it imposes on the annihilator remain consistent.


\subsection{Harmonic persistency of excitation}
\label{subsec:analytic-sufficient-input}
The counterexample isolates the obstruction: an
infinite-dimensional system can impose infinitely many constraints on a
candidate annihilator without ever triggering an algebraic identity theorem. We
therefore trade algebraic derivative moments for analytic continuation, using
inputs built from bounded frequencies that excite a total family of directions.

\begin{defn}[Harmonic signal]\label{def:harmonic-signal}
  A signal $h\colon\R\to U$ is \emph{harmonic} if it admits a representation
  \begin{equation*}
    h(t)=\sum_{\ell\ge1} v_\ell\,e^{i\omega_\ell t},
    \qquad v_\ell\in U_{\CC}\setminus\{0\},\quad\omega_\ell\in\R,
  \end{equation*}
  with pairwise distinct and bounded frequencies,
  $\Omega:=\sup_\ell|\omega_\ell|<\infty$, absolutely summable coefficients,
  $\sum_{\ell\ge1}\|v_\ell\|_{U_{\CC}}<\infty$, and the set of pairs
  $(\omega_\ell,v_\ell)$ invariant under $(\omega,v)\mapsto(-\omega,\overline v)$,
  with $v_\ell\in U$ when $\omega_\ell=0$.
\end{defn}

Such an $h$ is real-valued, bounded and continuous, its coefficients are
recovered uniquely by the Bohr mean $\hat h(\omega_\ell)=v_\ell$, and it is
entire of exponential type at most $\Omega$, so that identities established on
the finite window $I$ propagate to the whole line; see
Appendix~\ref{app:harmonic-pe}.

\begin{defn}[Harmonically persistently exciting]\label{def:harmonic-pe}
  A harmonic signal $h$ is \emph{harmonically persistently exciting} if there is
  a finite or countable total family $(\phi_j)_{j\in J}$ in $U_{\CC}$ such that,
  for each $j\in J$, there are pairwise distinct frequencies
  $(\omega_{j,k})_{k\ge1}$ of $h$, nonzero scalars $(a_{j,k})_{k\ge1}$, and a
  limit $\nu_j\in\R$ satisfying
  \[
    \hat h(\omega_{j,k})=a_{j,k}\,\phi_j
    \qand
    \omega_{j,k}\xrightarrow{k\to\infty}\nu_j .
  \]
\end{defn}

Such signals exist on every nonzero separable $U$ and can be built from any
orthonormal basis. The condition implies the classical one, and strictly so:
the input \eqref{eq:pe-counterexample-input} of
Proposition~\ref{prop:infinite-pe-insufficient} is persistently exciting of
infinite order but is not harmonic.

\begin{lem}
  \label{lem:harmonic-pe}
  Every nonzero separable Hilbert space $U$ admits a harmonically persistently exciting
  signal. Indeed, given a real orthonormal basis $(\phi_j)_{j\in J}$ of $U$,
  where $J=\{1,\ldots,\dim U\}$ or $J=\N$, the
  signal
  \begin{equation*}
    h(t):=\sum_{j\in J}\sum_{k\ge1}2^{-j-k}\cos(\omega_{j,k}t)\,\phi_j, \quad\text{where}\quad
    \omega_{j,k}:=2-2^{-j}-2^{-j-k},
  \end{equation*}
  is harmonically persistently exciting.
\end{lem}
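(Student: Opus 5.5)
The plan is to verify the conditions of Definition~\ref{def:harmonic-signal} and then Definition~\ref{def:harmonic-pe} directly from the explicit construction.

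\textbf{Step 1: $h$ is a harmonic signal.} First, I will rewrite each cosine as $\cos(\omega t)=\tfrac12(e^{i\omega t}+e^{-i\omega t})$. This expresses $h$ as $\sum_\ell v_\ell e^{i\omega_\ell t}$ with frequencies $\pm\omega_{j,k}$ and coefficients $v=2^{-j-k-1}\phi_j$, which are real, hence invariant under $(\omega,v)\mapsto(-\omega,\overline v)$.

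The substantive point is that the frequencies are pairwise distinct, so each pair $(j,k)$ contributes its own term and no two coefficients merge.
\begin{itemize}
\item[(a)] Write $\omega_{j,k}=2-2^{-j}(1+2^{-k})$. For fixed $j$ this lies strictly in $[2-\tfrac34\cdot 2^{1-j},\,2-2^{-j})$, and in particular in $[2-2^{1-j},\,2-2^{-j})$.
\item[(b)] These half-open intervals are disjoint for distinct $j$, so frequencies with different $j$ cannot coincide.
\item[(c)] For fixed $j$, the map $k\mapsto\omega_{j,k}$ is strictly increasing, so frequencies with the same $j$ are distinct.
\item[(d)] All $\omega_{j,k}\ge 2-\tfrac34\cdot 2=\tfrac12>0$. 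Hence the positive and negative frequencies never collide, and $0$ is not a frequency.
\end{itemize}
Boundedness follows since $\Omega\le2$. Absolute summability follows from $\sum_{j,k}2\cdot2^{-j-k-1}\|\phi_j\|\le1$. The same bound gives uniform convergence of the defining series, so $h$ is continuous, real and bounded. The step I would take most care with is this distinctness check: a collision $\omega_{j,k}=\omega_{j',k'}$ would merge coefficients, and a coefficient could then fail to be a multiple of a single $\phi_j$.

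\textbf{Step 2: Bohr coefficients.} By distinctness, the Bohr mean gives
\[
  \hat h(\omega_{j,k})=2^{-j-k-1}\phi_j .
\]
This is justified by the uniqueness of coefficients stated after Definition~\ref{def:harmonic-signal}, or directly: average $e^{-i\omega t}h(t)$ over $[-R,R]$ and use uniform convergence.

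\textbf{Step 3: harmonic persistency of excitation.} For each $j\in J$, take the frequencies $(\omega_{j,k})_{k\ge1}$. They are pairwise distinct, and
\[
  \hat h(\omega_{j,k})=a_{j,k}\phi_j,\qquad a_{j,k}=2^{-j-k-1}\ne0 .
\]
Moreover $\omega_{j,k}\to\nu_j:=2-2^{-j}\in\R$ as $k\to\infty$. The family $(\phi_j)_{j\in J}$ is an orthonormal basis, hence total in $U$. It is also total in $U_{\CC}$, because complex combinations of real basis vectors are dense there. Thus $h$ is harmonically persistently exciting.

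Existence on every nonzero separable $U$ follows because such a space has a countable real orthonormal basis indexed by $J$ as stated. In the finite-dimensional case $J=\{1,\dots,\dim U\}$ the argument is unchanged.
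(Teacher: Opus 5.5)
Your proof is correct and follows essentially the same route as the paper's: both place each $\omega_{j,k}$ in the pairwise disjoint blocks between $2-2^{1-j}$ and $2-2^{-j}$ inside $(0,2)$, which gives distinct frequencies (so no coefficients merge and no $\pm$ collision occurs), read off the coefficients $2^{-j-k-1}\phi_j$ with total mass at most $1$, and use $\omega_{j,k}\uparrow 2-2^{-j}$ for the accumulation condition. One small imprecision in (d): the bound $\tfrac12$ is loose (the smallest frequency is $\omega_{1,1}=\tfrac54$), but it is still true and suffices.
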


\begin{lem}
  \label{lem:harmonic-pe-implies-pe}
  Let $h$ be harmonically persistently exciting and let $\sigma\in\R$. Then
  $\bar u(t):=e^{\sigma t}h(t)$, restricted to $I$, is persistently exciting of
  infinite order.
\end{lem}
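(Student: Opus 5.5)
We argue by contradiction. Suppose $\zeta_0,\dots,\zeta_{L-1}\in U$ satisfy $\sum_{k=0}^{L-1}\angl{\zeta_k}{D^k\bar u}_U=0$ in $\DD'(I)$. Since $h$ is entire (Appendix~\ref{app:harmonic-pe}), $\bar u$ is smooth, so $D^k\bar u=\bar u^{(k)}$ classically. The relation is therefore a pointwise identity $\sum_k\angl{\zeta_k}{\bar u^{(k)}(t)}_U=0$ for $t\in I$. The scalar function $g(t):=e^{-\sigma t}\sum_k\angl{\zeta_k}{\bar u^{(k)}(t)}_U$ expands by Leibniz as
\[
  g(t)=\sum_{k=0}^{L-1}\sum_{m=0}^{k}\binom{k}{m}\sigma^{k-m}\angl{\zeta_k}{h^{(m)}(t)}_U .
\]
Since $\sum_\ell\|v_\ell\|<\infty$ and the frequencies are bounded, term-by-term differentiation is legitimate. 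Inserting $h^{(m)}(t)=\sum_\ell (i\omega_\ell)^m v_\ell e^{i\omega_\ell t}$ gives
\[
  g(t)=\sum_{\ell\ge1}\angl{p(\omega_\ell)}{v_\ell}_U\,e^{i\omega_\ell t},
  \qquad
  p(\omega):=\sum_{k=0}^{L-1}(\sigma+i\omega)^k\zeta_k\in U_{\CC}.
\]
Here we use the complex-bilinear extension of the inner product. Thus $p$ is a $U_{\CC}$-valued polynomial of degree at most $L-1$ in $\omega$.

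Next we propagate from $I$ to $\R$. The function $g$ is itself a harmonic-type series with absolutely summable coefficients and bounded frequencies, hence entire. It vanishes on $I$, an interval with nonempty interior, so it vanishes on all of $\R$ by the identity theorem. Taking Bohr means, $\hat g(\omega_\ell)=\angl{p(\omega_\ell)}{v_\ell}_U=0$ for every $\ell$. The frequencies are pairwise distinct, so coefficients of distinct exponentials do not merge, and uniqueness of Bohr coefficients applies. This step is the one that needs care: it requires the statement from Appendix~\ref{app:harmonic-pe} that an absolutely convergent exponential series with distinct bounded frequencies vanishing on an interval has all coefficients zero. We would cite it directly, or reprove it by noting that vanishing on $\R$ together with the Bohr mean $\lim_{R\to\infty}\frac1{2R}\int_{-R}^R g(t)e^{-i\omega t}\,\d t$ isolates each coefficient, by dominated convergence.

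Now we use the harmonic PE structure. For each $j$, $\hat h(\omega_{j,k})=a_{j,k}\phi_j$ with $a_{j,k}\neq0$, so $\angl{p(\omega_{j,k})}{\phi_j}_U=0$ for all $k$. The scalar polynomial $q_j(\omega):=\angl{p(\omega)}{\phi_j}_U$ has degree at most $L-1$ and vanishes at infinitely many distinct points $\omega_{j,k}$. Hence $q_j\equiv0$; convergence to $\nu_j$ is not even needed here, only infinitude. Its coefficients in powers of $(\sigma+i\omega)$ are $\angl{\zeta_k}{\phi_j}_U$, which gives $\angl{\zeta_k}{\phi_j}_U=0$ for all $k$ and all $j$. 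The re-expansion in the variable $s=\sigma+i\omega$ is an invertible affine change of variable, so this is legitimate. Since $(\phi_j)$ is total in $U_{\CC}$ and $\zeta_k\in U\subset U_{\CC}$, each $\zeta_k=0$. This holds for every $L$, so $\bar u$ is persistently exciting of infinite order.
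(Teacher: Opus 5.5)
Your proposal is correct and follows essentially the same route as the paper. Like the paper, you extend the relation to an entire function and use the identity theorem to propagate its vanishing from $I$ to $\R$. You then read off Bohr coefficients at the designated frequencies $\omega_{j,k}$, so that a scalar polynomial of degree at most $L-1$ in $\sigma+i\omega$ has infinitely many distinct zeros, and conclude by totality of $(\phi_j)$. Your version only adds the Leibniz expansion and the term-by-term differentiation that the paper leaves implicit.
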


The proofs are given in Appendix~\ref{app:harmonic-pe}.

\begin{thm}
  \label{thm:analytic-universal-sufficiency}
  Let $U$ and $X$ be separable real Hilbert spaces, let $A$ generate an
  analytic $C_0$-semigroup $S$ on $X$, and suppose that
  Assumptions~\ref{hyp:H1} and~\ref{hyp:compat} hold.
  Let $h$ be harmonically persistently exciting with frequencies
  $(\omega_\ell)_{\ell\ge1}$ and directions $(\phi_j)_{j\in J}$. Fix
  $T'>0$ and $\sigma\in\R$ such that
  $\sigma+i\,\overline{\{\omega_\ell:\ell\ge1\}}\subset\rho(A)$ and
  \begin{equation}\label{eq:analytic-growth-bound}
    \|S(t)\|_{\mathcal L(X)}\le M e^{\sigma t},
    \qquad t\ge0,
  \end{equation}
  for some $M\ge1$. Apply the input $\bar u(t):=e^{\sigma t}h(t)$ on
  $[0,T+T']$ and write $\bar x$ for the state trajectory with initial state
  $x_0$. The following are equivalent:
  \begin{enumerate}[label=\textup{(\roman*)},leftmargin=*]
    \item\label{item:analytic-approx-control} $(A,B)$ is approximately
      controllable in time $T$;
    \item\label{item:analytic-all-informative}
      $(\bar u,\bar x)$ is informative in $U\times W$ on $I$ for every
      $x_0\in W$;
    \item\label{item:analytic-window-all-state}
      $(\bar u,\bar x)$ is window informative for every $x_0\in X$.
  \end{enumerate}
  Moreover, \ref{item:analytic-all-informative} and
  \ref{item:analytic-window-all-state} imply
  \ref{item:analytic-approx-control} already at the single state $x_0=0$.
\end{thm}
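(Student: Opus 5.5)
The plan is to prove \ref{item:analytic-approx-control}$\Rightarrow$\ref{item:analytic-all-informative} and \ref{item:analytic-approx-control}$\Rightarrow$\ref{item:analytic-window-all-state} by a duality/annihilator argument. The converses \ref{item:analytic-all-informative}$\Rightarrow$\ref{item:analytic-approx-control} and \ref{item:analytic-window-all-state}$\Rightarrow$\ref{item:analytic-approx-control} will be obtained at $x_0=0$ by contraposition.

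\emph{Converse directions.} Suppose $(A,B)$ is not approximately controllable in time $T$. Then there is $0\neq\beta\in X$ with $B^\star S(t)^\star\beta=0$ for $t\in[0,T]$. By analyticity of the semigroup this identity extends to all $t\ge0$.
\begin{itemize}
\item For \ref{item:analytic-window-all-state}, the pair $(0,\beta)$ annihilates every window $(\bar u(s+\cdot),\bar x(s))$ when $x_0=0$. Indeed, $\bar x(s)=L_s\bar u$ and $\angl{\beta}{L_s\bar u}=\int_0^s\angl{B^\star S(s-r)^\star\beta}{\bar u(r)}\,\d r=0$. So window informativity fails.
\item For \ref{item:analytic-all-informative}, I need a $W$-functional. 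Since $\im L_t$ is orthogonal to $\beta$ in $X$ and lies in $W$, the functional $w\mapsto\angl{\beta}{w}_X$ is continuous on $W$ (via $W\hookrightarrow X$). Its Riesz representative in $W$ is nonzero by density of $W$ in $X$, and it annihilates $\bar x(t)$ for all $t$. Hence informativity in $U\times W$ fails at $x_0=0$.
\end{itemize}

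\emph{Forward direction, state-informativity.} Assume approximate controllability, and let $(\alpha,\beta)\in U\times W$ satisfy $\angl{\alpha}{\bar u(t)}_U+\angl{\beta}{\bar x(t)}_W=0$ on $I$. The strategy is to write $\bar x$ explicitly for the harmonic input. Since $\sigma+i\omega_\ell\in\rho(A)$, each mode $v_\ell e^{(\sigma+i\omega_\ell)t}$ produces the particular solution
\[
e^{(\sigma+i\omega_\ell)t}(\sigma+i\omega_\ell-A)^{-1}B_Xv_\ell .
\]
Assumption~\ref{hyp:compat} together with the resolvent decay puts these particular solutions in $W$. Consequently
\[
\bar x(t)=S(t)\bigl(x_0-\textstyle\sum_\ell R_\ell v_\ell\bigr)+\sum_\ell e^{(\sigma+i\omega_\ell)t}R_\ell v_\ell .
\]
The sum converges absolutely because $\sum_\ell\|v_\ell\|<\infty$ and the resolvent is uniformly bounded on the compact set $\sigma+i\overline{\{\omega_\ell\}}$. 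The annihilation identity then reads $F(t)+G(t)=0$ on $I$, where:
\begin{itemize}
\item $G(t)=e^{\sigma t}g(t)$ and $g(t)=\sum_\ell e^{i\omega_\ell t}\bigl(\angl{\alpha}{v_\ell}+\angl{\beta}{R_\ell v_\ell}_W\bigr)$ is almost periodic;
\item $F(t)=\angl{\beta}{S(t)z}$ is a free-semigroup term.
\end{itemize}
Both extend analytically to a sector or strip, using analyticity of $S$ for $F$ and the entire function $g$ for $G$. The identity therefore holds for all $t>0$.

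Next, multiply by $e^{-\sigma t}$ and take Bohr means as $t\to\infty$. The growth bound \eqref{eq:analytic-growth-bound} gives $\|e^{-\sigma t}S(t)\|\le M$. I expect the main obstacle here: showing that the Bohr coefficients of $e^{-\sigma t}F(t)$ at the frequencies $\omega_\ell$ vanish. This holds because $\sigma+i\omega_\ell\in\rho(A)$, via the identity $\frac1\tau\int_0^\tau e^{-(\sigma+i\omega)t}S(t)z\,\d t=\frac1\tau(\sigma+i\omega-A)^{-1}(z-e^{-(\sigma+i\omega)\tau}S(\tau)z)\to0$. This yields, for every frequency $\ell$,
\[
\angl{\alpha+Q_\ell^\star\beta}{v_\ell}=0,\qquad Q_\ell^\star:=\bigl((\sigma+i\omega_\ell-A)^{-1}B\bigr)^\star .
\]

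Now use harmonic PE. Along each direction $\phi_j$ the frequencies $\omega_{j,k}\to\nu_j$ and $v=a_{j,k}\phi_j$, so $\lambda\mapsto\angl{\alpha+((\lambda-A)^{-1}B)^\star\beta}{\phi_j}$ vanishes on a sequence accumulating at $\sigma+i\nu_j\in\rho(A)$. By the identity theorem it vanishes on the connected component of $\rho(A)$ containing the right half-plane. Taking $\lambda\to+\infty$ and using $\|Q_r\|=O(r^{-1/2})$ gives $\angl{\alpha}{\phi_j}=0$. Totality of $(\phi_j)$ then forces $\alpha=0$. It follows that $B^\star(\lambda-A^\star)^{-1}\beta'=0$ on that component, where $\beta'$ is the $X$-representative of the $W$-functional. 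By Laplace inversion, $B^\star S(t)^\star\beta'=0$, so $\beta'=0$ by approximate controllability, and hence $\beta=0$.

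\emph{Forward direction, window-informativity.} The argument is the same. An annihilator $(\alpha(\cdot),\beta)\in L^2(0,T;U)\times X$ of all windows gives, for a.e.\ $s\in(0,T')$,
\[
\int_0^T\angl{\alpha(r)}{\bar u(s+r)}\,\d r+\angl{\beta}{\bar x(s)}=0 .
\]
This is again of the form ``almost periodic times $e^{\sigma s}$ plus free term''. Analyticity propagates it to all $s>0$, and the Bohr means give $\int_0^T e^{(\sigma+i\omega_\ell)r}\angl{\alpha(r)}{v_\ell}\,\d r+\angl{\beta}{R_\ell v_\ell}=0$. Accumulation along each $\phi_j$ together with the identity theorem in $\lambda$ yields the entire-in-$\lambda$ relation
\[
\int_0^Te^{\lambda r}\angl{\alpha(r)}{\phi_j}\,\d r=-\angl{\beta}{(\lambda-A)^{-1}B\phi_j}.
\]
The left side is entire of exponential type and the right side decays as $\lambda\to+\infty$. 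On the right half-line the right side is $-\int_0^\infty e^{-\lambda t}\angl{B^\star S(t)^\star\beta}{\phi_j}\,\d t$. Comparing growth via a Paley--Wiener/Phragmén--Lindelöf argument forces both sides to vanish, which gives $\alpha=0$ and $\beta\perp\RR_\infty(0)$. Analyticity then gives $\beta\perp\RR_T(0)$, so $\beta=0$ by approximate controllability.
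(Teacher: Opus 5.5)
Your converse directions and the window-informativity direction are sound and follow the paper's route. That route is the expansion $\bar x(t)=S(t)c+\sum_\ell q_\ell e^{s_\ell t}$, Bohr averaging to remove the free response, and the identity theorem along each $\phi_j$. In the window case you close with a growth/Liouville-type comparison. The paper instead notes that $\int_0^T e^{zt}\angl{g(t)}{\phi_j}_U\,\d t+\angl{\xi}{\mathbf H(z)\phi_j}_X=\int_{-T}^\infty e^{-zr}p_j(r)\,\d r$ is a translated Laplace transform and invokes uniqueness. That is shorter but equivalent.

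The gap is in the last step of \ref{item:analytic-approx-control}$\Rightarrow$\ref{item:analytic-all-informative}, where you pass to ``the $X$-representative $\beta'$ of the $W$-functional''. An annihilator for informativity in $U\times W$ is paired through $\angl{\cdot}{\cdot}_W$, so $w\mapsto\angl{\beta}{w}_W$ is an element $\eta\in W'$. But $W'$ is strictly larger than $X$ as soon as the $W$-norm is not equivalent to the $X$-norm. Point evaluation in Example~\ref{ex:boundary-heat-informative} is exactly such a functional, and capturing functionals of this kind is the whole point of informativity in $U\times W$. So $\beta'$ does not exist in general. Approximate controllability in $X$ only rules out annihilators of $\im L_T$ lying in $X$, so it cannot be applied to $\eta$. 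What you need is $\overline{\im L_T}^{\,W}=W$, and this is where analyticity is used a second time:
\begin{enumerate}[label=\textup{(\alph*)},leftmargin=*]
  \item Dualizing $S(r)\in\mathcal L(X,W)$ gives $S(r)^\star\eta\in X$, indeed in $\mathcal D(A^\star)$, for $r>0$. Hence $\psi_\eta(r):=B^\star S(r)^\star\eta$ is defined.
  \item One proves $\angl{\eta}{\mathbf H(s)v}_{W',W}=\int_0^\infty e^{-sr}\angl{\psi_\eta(r)}{v}_U\,\d r$ for $\operatorname{Re}s>\sigma$. Local square integrability of $\psi_\eta$ comes from Assumption~\ref{hyp:H1} by duality, and the exponential bound at infinity from $S(t)|_W=S(1)S(t-1)|_W$.
  \item Your vanishing transfer function then gives $\psi_\eta\equiv0$. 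So each $\eta_r:=S(r)^\star\eta\in X$ annihilates $\im L_T$, and approximate controllability gives $\eta_r=0$.
  \item Finally, $\eta=\lim_{r\downarrow0}\eta_r=0$ by strong continuity of the dual semigroup on $W'$.
\end{enumerate}
Without this smoothing-and-limit step, your argument only excludes annihilators lying in $X$. That is, it proves informativity in $U\times X$, not in $U\times W$ as claimed in \ref{item:analytic-all-informative}.
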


The proof is given in Appendix~\ref{app:analytic-universal-sufficiency}.

\begin{ex}
  \label{ex:boundary-heat-informative}
  Consider the heat equation of Example~\ref{ex:intro-pde} with $\nu=1$.
  There are spaces $W\hookrightarrow X\hookrightarrow V$, measuring the
  semigroup orbit in $H^1$ near the observation point $\xi_0$, for which
  Assumptions~\ref{hyp:H1}, \ref{hyp:H2}, and~\ref{hyp:compat} hold and
  $C\in\mathcal L(W,\R)$, while $B$ and $C$ remain unbounded on $X$.
  Take $\sigma=0$ and
  \[
    \bar u(t):=\sum_{k\ge1}2^{-k}\cos(\omega_kt), \qquad \omega_k:=1+\frac{1}{k+1}.
  \]
  Then $\bar u$ is harmonically persistently exciting and $(A,B)$ is
  approximately controllable in every positive time, so
  Theorem~\ref{thm:analytic-universal-sufficiency} makes $(\bar u,\bar x)$
  informative in $U\times W$ for every initial state $\bar x_0\in W$, and
  window informative for every $\bar x_0\in X$.
  Appendix~\ref{app:boundary-heat-informative} gives the details.
\end{ex}

\section{Willems' fundamental lemma}
\label{sec:fundamental-lemma}

Willems' fundamental lemma \parencite{willemsNotePersistencyExcitation2005} shows that a single, sufficiently informative measured trajectory characterizes all admissible trajectories of a linear system, without requiring explicit knowledge of the system matrices $A$, $B$, and $C$.

In this section, we extend this result to the infinite-dimensional setting. The primary challenge lies in interpreting the state derivative. The forcing term $Bu$ poses no difficulty, since $B\in\mathcal L(U,V)$. The generator $A$, however, is unbounded: it takes values in $V$ only on the domain $\mathcal D_V(A)$ of its realization in $V$, which need not contain the space $W$ in which the record is measured. Consequently, $Ax$ may leave $V$ when $x\in W$, and the state derivative must be evaluated in a larger space.

To resolve this, we use the extrapolation space $V_{-1}$ associated with $A$ \parencite[Sec.~2.10]{tucsnakObservationControlOperator2009}: a Hilbert space into which $V$ embeds continuously and densely, and to which $S$ extends as a strongly continuous semigroup whose generator $A_{-1}\in\mathcal L(V,V_{-1})$ is the unique continuous extension of $A$. We use the same symbols $A$ and $S$ for these extensions, reserving $A_{-1}$ for arguments in which the distinction matters. Since $X\hookrightarrow V\hookrightarrow V_{-1}$, we have $A\in\mathcal L(X,V_{-1})$ and $B\in\mathcal L(U,V_{-1})$.

\subsection{Input--state--output behavior}
\label{subsec:input-state-output}

With the dynamics interpreted in $V_{-1}$, the input--state--output behavior is captured by the bounded \emph{graph operator}
\begin{equation}\label{eq:graph-operator}
  \Gamma:U\times W\to U\times W\times V_{-1}\times Y,
  \qquad
  \Gamma(v,\xi):=(v,\xi,A\xi+Bv,C\xi).
\end{equation}
Since $\|\Gamma(v,\xi)\|_{U\times W\times V_{-1}\times Y}^2\ge\|(v,\xi)\|_{U\times W}^2$, the operator is
bounded below. Hence it is a homeomorphism onto its closed range
$\im\Gamma$. The fundamental lemma below identifies this subspace from a
single informative data record.

This identification comes in two equivalent forms. The first is pointwise, relying directly on the state derivative as in \parencite{schmitzContinuoustimeFundamentalLemma2024}. However, this formulation is stated in $V_{-1}$, a space that depends on the unknown generator $A$ and is therefore hard to use in a purely data-driven setting.

To avoid differentiating the record, we provide a second, weak form, which moves the derivative onto test functions in $H_0^1(I)$ and never differentiates the record itself, making it suitable for sampled measurements. Following \textcite{wakaikiDataDrivenControlContinuousTime2026}, for any given $(u,x,y)\in L^2(I;U\times W\times Y)$, we define the associated \emph{synthesis operator} as
\[\begin{aligned} \mathcal W_{(u,x,y)} :H_0^1(I)&\to U\times W\times V_{-1}\times Y,\\ \phi &\mapsto{} \Bigl( \int_0^T\!\phi\,u\,\d t,\; \int_0^T\!\phi\,x\,\d t,\; -\int_0^T\!\dot\phi\,x\,\d t,\; \int_0^T\!\phi\,y\,\d t \Bigr). \end{aligned}\]
Its third component in fact lies in $W\hookrightarrow V_{-1}$, by the Cauchy--Schwarz inequality. If, in addition, $x\in H^1(I;V_{-1})$, integration by parts gives $-\int_0^T\dot\phi\,x\,\d t=\int_0^T\phi\,\dot x\,\d t$ as an identity in $V_{-1}$.

\begin{thm}[Willems' fundamental lemma]
  \label{thm:willems-gramian}
  Suppose that Assumption~\ref{hyp:H1} holds. Let $\bar x_0\in W$, let
  $\bar u\in L^2(I;U)$, let $(\bar x,\bar y)$ be the mild solution generated by
  $(\bar x_0,\bar u)$, and suppose that $(\bar u,\bar x)$ is informative in
  $U\times W$. Let $\mathcal G:=\mathcal G(\bar u,\bar x,\dot{\bar x},\bar y)$ be the Gramian
  \eqref{eq:hilbert-gramian} on $U\times W\times V_{-1}\times Y$. Then
  \begin{equation}\label{eq:graph-identification}
    (\ker\mathcal G)^\perp
    =\overline{\im\mathcal W_{(\bar u,\bar x,\bar y)}}^{U\times W\times V_{-1}\times Y}
    =\im\Gamma.
  \end{equation}
  Moreover, for
  every $u\in L^2(I;U)$, $x\in C(I;W)\cap H^1(I;V_{-1})$, and $y\in L^2(I;Y)$,
  the following are equivalent:
  \begin{enumerate}[label=\textup{(\roman*)},leftmargin=*]
    \item\label{item:wfl-dynamics}
    The triple $(u,x,y)$ is a trajectory of
    \begin{equation*}
    \begin{cases}
    \dot x=Ax+Bu \quad\text{in }L^2(I;V_{-1}),\\
    y=Cx \quad\text{in }L^2(I;Y);
    \end{cases}
    \end{equation*}
    \item\label{item:wfl-pointwise}
      $(u,x,\dot x,y)(t)\in(\ker\mathcal G)^\perp$ for a.e.\ $t\in I$;
    \item\label{item:wfl-synthesis}
      $\im\mathcal W_{(u,x,y)}\subseteq(\ker\mathcal G)^\perp$.
  \end{enumerate}
  Whenever these conditions hold, $(x,y)$ is the mild solution
  \eqref{eq:hilbert-mild} generated by $(x(0),u)$.
\end{thm}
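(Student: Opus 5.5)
The plan is to reduce everything to the single observation that the measured record lies pointwise on the graph of $\Gamma$. Then we use that $\Gamma$ is a homeomorphism onto its closed range.

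\emph{Step 1: regularity of the record.} Since $\bar x_0\in W$ and Assumption~\ref{hyp:H1} holds, the mild solution satisfies $\bar x\in C(I;W)$. I would first check that $\bar x\in H^1(I;V_{-1})$ with $\dot{\bar x}=A_{-1}\bar x+B\bar u$ a.e. This is the standard fact that mild solutions are strong solutions in the extrapolation space. Here it is immediate in the bounded sense, because $A_{-1}\in\mathcal L(X,V_{-1})$ and $B\in\mathcal L(U,V_{-1})$. Hence $A_{-1}\bar x\in C(I;V_{-1})$ and $B\bar u\in L^2(I;V_{-1})$, and one verifies the identity by testing against $\phi\in\DD(I)$. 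Likewise $\bar y=C\bar x$ by \eqref{eq:hilbert-mild-output}. Consequently
\[
z(t):=(\bar u,\bar x,\dot{\bar x},\bar y)(t)=\Gamma(\bar u(t),\bar x(t))\quad\text{for a.e. } t\in I .
\]

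\emph{Step 2: identification \eqref{eq:graph-identification}.} By \eqref{eq:gramian-kernel}, $(\ker\mathcal G)^\perp=\cspan z$. Step 1 gives $\cspan z\subseteq\im\Gamma$, since the range is closed. For the reverse inclusion, note that $\Gamma$ is bounded and bounded below. Therefore it maps the subspace $\spann\{(\bar u,\bar x)(t)\}$, which is dense in $U\times W$ by informativity, onto a dense subspace of $\im\Gamma$. That image is contained in $\cspan z$, so $\im\Gamma\subseteq\cspan z$.

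For the synthesis operator, integration by parts (valid because $\bar x\in H^1(I;V_{-1})$ and $\phi\in H^1_0(I)$) gives $\mathcal W_{(\bar u,\bar x,\bar y)}\phi=\int_0^T\phi\,z\,\d t\in\cspan z$. Conversely, suppose $w$ is orthogonal to $\im\mathcal W$. Then $\int_0^T\phi(t)\angl{w}{z(t)}\,\d t=0$ for all $\phi\in H^1_0(I)$. By density of $H_0^1(I)$ in $L^2(I)$, this forces $\angl{w}{z(t)}=0$ a.e., so $w\in\ker\mathcal G$. This gives the middle equality.

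\emph{Step 3: the equivalences.} Direction \ref{item:wfl-dynamics}$\Rightarrow$\ref{item:wfl-pointwise} holds because $(u,x,\dot x,y)(t)=\Gamma(u(t),x(t))$. Direction \ref{item:wfl-pointwise}$\Rightarrow$\ref{item:wfl-dynamics} holds because membership $(u,x,\dot x,y)(t)=\Gamma(v,\xi)$ forces $v=u(t)$ and $\xi=x(t)$ from the first two components, and then $\dot x=Ax+Bu$ and $y=Cx$ a.e. The equivalence \ref{item:wfl-pointwise}$\Leftrightarrow$\ref{item:wfl-synthesis} repeats the argument of Step 2 for the signal $(u,x,\dot x,y)\in L^2(I;U\times W\times V_{-1}\times Y)$, using the assumption $x\in H^1(I;V_{-1})$. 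One direction holds because closed subspaces contain Bochner integrals of a.e.\ contained integrands. For the other, test with $w\in(\im\Gamma)^\perp$ and use density of $H_0^1(I)$ in $L^2(I)$ to conclude $\angl{w}{(u,x,\dot x,y)(t)}=0$ a.e.

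\emph{Step 4: the mild-solution claim.} This is the main obstacle. We must show that a strong solution $x\in C(I;W)\cap H^1(I;V_{-1})$ of $\dot x=A_{-1}x+Bu$ coincides with $S(t)x(0)+L_tu$. I would argue that $s\mapsto S_{-1}(t-s)x(s)$ is differentiable in a still larger space, say $V_{-2}$, or equivalently work weakly against $\psi\in\mathcal D((A_{-1}^\star)^2)$. Its derivative is
\[
-A_{-1}S(t-s)x(s)+S(t-s)\bigl(A_{-1}x(s)+Bu(s)\bigr)=S(t-s)Bu(s),
\]
using commutation of $S$ and $A_{-1}$ on $V\supset W$. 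If the product rule is delicate, I would first prove it for smooth approximants of $x$ in $H^1(I;V_{-1})\cap C(I;V)$ and then pass to the limit.

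Integrating over $[0,t]$ gives $x(t)=S(t)x(0)+L_tu$ in the larger space. Both sides lie in $X$, and the embeddings are injective, so equality holds in $X$ and $x$ is the mild solution. Finally, $y=Cx$ equals \eqref{eq:hilbert-mild-output}.
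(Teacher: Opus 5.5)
Your proposal is correct and follows the paper's proof in all essentials. It shows that $\bar x\in H^1(I;V_{-1})$ and $(\bar u,\bar x,\dot{\bar x},\bar y)(t)=\Gamma(\bar u(t),\bar x(t))$ a.e., then uses informativity together with the fact that $\Gamma$ is a homeomorphism onto its closed range to get $(\ker\mathcal G)^\perp=\im\Gamma$, and ends with the standard passage from strong to mild solutions, which you sketch via a product rule where the paper simply cites variation of constants.

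The one deviation is that you obtain $\overline{\im\mathcal W_{(\bar u,\bar x,\bar y)}}=(\ker\mathcal G)^\perp$ and \textup{(iii)}$\Rightarrow$\textup{(ii)} by a direct duality argument on $\int_0^T\phi\,z\,\d t$, which needs no informativity. The paper instead factors $\mathcal W_{(\bar u,\bar x,\bar y)}\phi=\Gamma\bigl(\int_0^T\phi\,\bar u\,\d t,\int_0^T\phi\,\bar x\,\d t\bigr)$, uses informativity to make the inner set dense, and proves \textup{(iii)}$\Rightarrow$\textup{(i)} by comparing components of the $\Gamma$-preimage.
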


The proof is given in Appendix~\ref{app:willems-gramian}.

\begin{rmk}[Input--state behavior]\label{rmk:input-state-case}
  When the output is not recorded, that is, when
  $Y=\{0\}$, the fourth component of $\Gamma$ and of $\mathcal W$ disappears
  and the state may be measured in $X$. 
\end{rmk}

\subsection{Input--output behavior}
\label{subsec:io-behavior}

When only input--output data is available, the state at the beginning of each candidate trajectory is latent. Following Section~\ref{subsec:window-informativity}, we therefore work with entire input--output windows of length $T$, which encode that latent state through the finite-horizon solution maps of Lemma~\ref{lem:finite-horizon-output}.

\begin{defn}[Input--output behavior]\label{def:io-behavior}
  For $T>0$, the \emph{input--output behavior} on $[0,T]$ is
  \[
    \mathcal B_T^{u,y}:=\bigl\{(u,y)\in L^2(0,T;U)\times L^2(0,T;Y)\ :\
      y=\mathcal O_Tx_0+\mathcal F_Tu\ \text{ for some }x_0\in X\bigr\}.
  \]
\end{defn}

Suppose we record an input--output trajectory $(\bar u,\bar y)$ on $[0,T+T']$. The shift identity $\bar y(s+\cdot)=\mathcal O_T\bar x(s)+\mathcal F_T\bar u(s+\cdot)$ of Lemma~\ref{lem:finite-horizon-output} places every recorded window $(\bar u(s+\cdot),\bar y(s+\cdot))$ in $\mathcal B_T^{u,y}$, with latent state $\bar x(s)$. Moreover, a pair in $\mathcal B_T^{u,y}$ depends linearly and boundedly on its parameters $(u,x_0)$, so weighted superpositions of the recorded windows are again trajectories. Informativity of the unmeasured state windows thus transfers to the measured input--output windows.

\begin{thm}[Fundamental lemma, input--output]
  \label{thm:windowed-io-fundamental-lemma}
  Under Assumptions~\ref{hyp:H1} and~\ref{hyp:H2}, let $(\bar u,\bar x,\bar y)$
  be the record on $[0,T+T']$ generated by $\bar u$ and an initial state
  $x_0\in X$, and let
  \begin{equation}\label{eq:io-window-synthesis}
    \mathcal Y_{T,T'}^{\bar u,\bar y}\phi
      :=\int_0^{T'}\phi(s)(\bar u(s+\cdot),\bar y(s+\cdot))\,\d s,
    \qquad \phi\in L^2(0,T').
  \end{equation}
  If $(\bar u,\bar x)$ is window informative, then
  \begin{equation}\label{eq:io-behavior-identification}
    \overline{\im\mathcal Y_{T,T'}^{\bar u,\bar y}}
      =\overline{\mathcal B_T^{u,y}} .
  \end{equation}
\end{thm}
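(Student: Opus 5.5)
The plan is to factor the window synthesis through the bounded parametrization of the behavior. Define the bounded linear map
\[
  \Pi_T:L^2(0,T;U)\times X\to L^2(0,T;U)\times L^2(0,T;Y),\qquad
  \Pi_T(u,x_0):=(u,\mathcal O_Tx_0+\mathcal F_Tu).
\]
Its boundedness follows from Lemma~\ref{lem:finite-horizon-output}. By Definition~\ref{def:io-behavior}, $\mathcal B_T^{u,y}=\im\Pi_T$. The proof then reduces to the factorization
\[
  \mathcal Y_{T,T'}^{\bar u,\bar y}=\Pi_T\circ\mathcal Z_{T,T'}^{\bar u,\bar x}
\]
on $L^2(0,T')$. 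Combined with window informativity and continuity of $\Pi_T$, this yields the claim.

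\textbf{Step 1 (factorization).} For a.e.\ $s\in[0,T']$, the shift identity of Lemma~\ref{lem:finite-horizon-output} with $\rho=T$ and $\tau=T+T'$ gives
\[
  (\bar u(s+\cdot),\bar y(s+\cdot))=\Pi_T(\bar u(s+\cdot),\bar x(s)).
\]
Multiply by $\phi(s)$ and integrate over $[0,T']$. Since $\Pi_T$ is bounded and linear, it commutes with the Bochner integral, so $\mathcal Y\phi=\Pi_T\mathcal Z\phi$.

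Commuting $\Pi_T$ with the integral requires the integrands to be Bochner integrable in the respective spaces. The map $s\mapsto\bar u(s+\cdot)\in L^2(0,T;U)$ is continuous, since translation is continuous in $L^2$. The map $s\mapsto\bar x(s)\in X$ is continuous because the mild solution lies in $C([0,T+T'];X)$. Hence $s\mapsto(\bar u(s+\cdot),\bar x(s))$ is continuous into $L^2(0,T;U)\times X$, and $\phi$ times it is integrable. The integral defining $\mathcal Y\phi$ is interpreted in $L^2(0,T;U\times Y)$. It coincides with $\Pi_T\mathcal Z\phi$ once we know that $s\mapsto\bar y(s+\cdot)$ is measurable there, and this follows from the same shift identity.

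This is the only delicate point. The shift identity holds as an identity in $L^2(0,T;Y)$ for each fixed $s$, which is exactly what is needed, so no pointwise definition of $C\bar x(s)$ is required. This matters because $x_0\in X$ only, and $\bar x(s)$ need not lie in $W$.

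\textbf{Step 2 (closures).} From the factorization, $\im\mathcal Y\subseteq\im\Pi_T=\mathcal B_T^{u,y}$, so $\overline{\im\mathcal Y}\subseteq\overline{\mathcal B_T^{u,y}}$.

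For the converse inclusion, take $(u,y)=\Pi_T(u,x_0)\in\mathcal B_T^{u,y}$. By window informativity there are $\phi_n$ with $\mathcal Z\phi_n\to(u,x_0)$ in $L^2(0,T;U)\times X$. Continuity of $\Pi_T$ gives $\mathcal Y\phi_n=\Pi_T\mathcal Z\phi_n\to(u,y)$. Hence $\mathcal B_T^{u,y}\subseteq\overline{\im\mathcal Y}$, and taking closures yields \eqref{eq:io-behavior-identification}.

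In short, the argument is that a bounded operator maps a dense subset of its domain onto a dense subset of its range. Beyond the measurability check in Step~1, I expect no real obstacle.
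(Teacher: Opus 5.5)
Your proposal is correct and follows the paper's proof essentially verbatim: your $\Pi_T$ is the parametrization $\mathcal M_T$ of Lemma~\ref{lem:io-parametrization}, you obtain the factorization $\mathcal Y_{T,T'}^{\bar u,\bar y}=\mathcal M_T\mathcal Z_{T,T'}^{\bar u,\bar x}$ from the shift identity as the paper does, and you argue the two closure inclusions the same way. Your added check that $s\mapsto(\bar u(s+\cdot),\bar x(s))$ is continuous, so that the Bochner integrals are well defined, is a detail the paper leaves implicit.
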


The proof and the auxiliary parametrization (Lemma~\ref{lem:io-parametrization})
are given in Appendix~\ref{app:windowed-io-fundamental-lemma}.
The identification \eqref{eq:io-behavior-identification} requires no
observability. Approximate observability in time $T$ makes the latent state
unique; exact observability makes its recovery continuous and
$\mathcal B_T^{u,y}$ closed.

\section{Data-driven controllability}
\label{sec:controllability}

For finite-dimensional systems, controllability can be verified from a single
measured trajectory, provided that the trajectory is informative
\parencite{vanwaardeDataInformativityNew2020,mishraDataDrivenTestsControllability2021}.
The infinite-dimensional counterpart rests on the spectral characterization of
approximate controllability due to Fattorini, which we state first.

\begin{prop}[Fattorini--Hautus {\parencite[Thm.~III.3.7]{boyerControllabilityLinearParabolic2020}}]\label{prop:fattorini-hautus}
  Suppose that $B$ satisfies Assumption~\ref{hyp:H1}, that $A$ generates an
  analytic semigroup, that $A^\star$ has compact resolvent and a complete
  system of root vectors, and that $B^\star:\mathcal D(A^\star)\to U$ is
  bounded for the graph norm of $\mathcal D(A^\star)$. Then
  \begin{equation*}
    (A,B) \text{ is approximately controllable}
    \quad\Longleftrightarrow\quad
    \ker B^\star\cap\ker(A^\star-\lambda\operatorname{id}_{X_{\CC}})=\{0\}
    \quad\forall\lambda\in\CC .
  \end{equation*}
  In particular, approximate controllability is independent of $T$.
\end{prop}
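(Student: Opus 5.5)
The proof reduces approximate controllability to density of the reachable set and identifies the orthogonal complement of $\RR_T(0)$ with the annihilated states $\{\psi\in X : B^\star S(\cdot)^\star\psi=0 \text{ on } [0,T]\}$. Concretely, for $\psi\in\mathcal D(A^\star)$ and $u\in L^2(I;U)$ we have $\angl{L_Tu}{\psi}_X=\int_0^T\angl{u(r)}{B^\star S(T-r)^\star\psi}_U\,\d r$, where $B^\star$ is the adjoint of the compatible realization $B_X$ from Appendix~\ref{app:control-realization}. Assumption~\ref{hyp:H1} makes the map $\psi\mapsto B^\star S(\cdot)^\star\psi$ extend boundedly from $X$ into $L^2(I;U)$. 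Hence $(A,B)$ is approximately controllable in time $T$ if and only if $B^\star S(t)^\star\psi=0$ for a.e.\ $t\in I$ forces $\psi=0$. The whole argument is a unique-continuation statement for this observation map.

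For the implication from approximate controllability to the Hautus condition, I argue by contraposition. Suppose $\lambda\in\CC$ and $0\neq\psi\in X_{\CC}$ satisfy $A^\star\psi=\lambda\psi$ and $B^\star\psi=0$. Then $S(t)^\star\psi=e^{\lambda t}\psi$, so $B^\star S(t)^\star\psi=e^{\lambda t}B^\star\psi=0$ for all $t$. Taking real or imaginary parts gives a nonzero real $\psi$ orthogonal to $\RR_T(0)$, so $(A,B)$ is not approximately controllable. This direction uses only the eigenvector and needs no analyticity.

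For the converse, assume the Hautus condition and take $\psi$ with $B^\star S(t)^\star\psi=0$ on $I$.
1. Since $S^\star$ is analytic, $S(t)^\star\psi\in\mathcal D(A^\star)$ for $t>0$. Together with boundedness of $B^\star$ in the graph norm, this makes $t\mapsto B^\star S(t)^\star\psi$ real-analytic on $(0,\infty)$. Its vanishing on $I$ therefore propagates to all $t>0$. This step also gives the $T$-independence.
2. Because $A^\star$ has compact resolvent, its spectrum is discrete, consisting of eigenvalues $\lambda_k$ of finite algebraic multiplicity. Let $P_k$ be the Riesz projections and $E_k=\im P_k$ the finite-dimensional generalized eigenspaces. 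The function $B^\star S(t)^\star\psi$ vanishes for all $t>0$, so taking the Laplace transform for large $\Re s$ gives $B^\star(s-A^\star)^{-1}\psi=0$. This identity continues analytically to $\rho(A^\star)$.
3. Integrate around a small circle enclosing only $\lambda_k$ to obtain $B^\star P_k\psi=0$. In the same way, integrating against $(s-\lambda_k)^j$ gives $B^\star(A^\star-\lambda_k)^jP_k\psi=0$ for every $j$.
4. The subspace $F=\{\eta\in E_k : B^\star(A^\star-\lambda_k)^j\eta=0\ \forall j\}$ is $A^\star$-invariant and finite-dimensional. If $F\neq\{0\}$, it contains an eigenvector of $A^\star$ lying in $\ker B^\star$, which contradicts the Hautus condition. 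Hence $F=\{0\}$ and $P_k\psi=0$ for every $k$.
5. Completeness of the root vectors of $A^\star$ then gives $\psi=0$. Here I pair $\psi$ against the root vectors of $A$, which span $\operatorname{ran}P_k^\star$, and these are total by the corresponding completeness assumption.

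The main obstacle is the last step: passing from $P_k\psi=0$ for all $k$ to $\psi=0$. Completeness of the root vectors of $A^\star$ gives totality of $\bigcup_k\im P_k$, whereas concluding $\psi=0$ requires totality of the spectral subspaces of the adjoint, that is, $\bigcap_k\ker P_k=\{0\}$. The two are not equivalent without extra structure. I would first try to use analyticity: the sector bound lets the resolvent $(s-A^\star)^{-1}\psi$, which is analytic off the spectrum with all residues zero, be shown to be entire and bounded, hence zero by Liouville. If that fails, I would cite the reference \parencite[Thm.~III.3.7]{boyerControllabilityLinearParabolic2020} for this completeness step, since that is precisely the hypothesis it is stated under.
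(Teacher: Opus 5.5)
Your reduction to unobservable states, the necessity direction, and Steps 1--4 are sound. For $\psi$ with $B^\star S(\cdot)^\star\psi=0$ you correctly obtain $B^\star(A^\star-\lambda_k)^jP_k\psi=0$, and hence $P_k\psi=0$ for every Riesz projection $P_k$ of $A^\star$.

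The gap is Step 5, and it carries the whole sufficiency direction. Since $\ker P_k=(\im P_k^\star)^\perp$ and $\im P_k^\star$ is a generalized eigenspace of $A$, you have only shown that $\psi$ is orthogonal to every root vector of $A$. Concluding $\psi=0$ requires completeness of the root vectors of $A$, that is, $\bigcap_k\ker P_k=\{0\}$. The hypothesis concerns $A^\star$, so the ``corresponding completeness assumption'' you invoke in Step 5 is not available. As you suspect, the two conditions genuinely differ: completeness of root vectors is not inherited by the adjoint, already for compact operators. With completeness only for $A^\star$, you would still have to show that the closed $A^\star$-invariant subspace $\{\psi: B^\star S(t)^\star\psi=0,\ t>0\}$ is trivial. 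By your Step 4 it contains no root vector, so this is a spectral-synthesis property, and completeness on $X$ does not provide it in general.

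The Liouville repair does not work either. Once all principal parts vanish, $s\mapsto(s-A^\star)^{-1}\psi$ is indeed entire. But analyticity of $S$ bounds the resolvent only on a sector $|\arg(s-\omega)|<\pi/2+\delta$. On the complementary sector containing the negative real axis there is no a priori estimate. Hence neither Liouville nor Phragm\'en--Lindel\"of applies without an extra growth hypothesis there (for instance a Schatten-class resolvent, as in Keldysh-type theorems), and none is assumed.

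In any case the route cannot succeed as stated, because it never uses completeness. Consider the Riemann--Liouville semigroup $J^tf(x)=\Gamma(t)^{-1}\int_0^x(x-y)^{t-1}f(y)\,\d y$ on $L^2(0,1)$. It is analytic, has compact resolvent, and its generator has empty spectrum. With $B=0$ the Hautus condition holds vacuously, yet the reachable set is $\{0\}$; the resolvent there is entire and unbounded.

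Deferring this step to Boyer's theorem puts you where the paper is, since the paper gives no proof and simply quotes that result. But then your argument establishes only necessity and the reduction to $P_k\psi=0$. The decisive step, where a completeness hypothesis on the right operator must enter, is missing.
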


The Fattorini--Hautus test is a statement about the unknown operators $A^\star$
and $B^\star$. We now convert it into a test on the measured record: an
eigenvector obstructing controllability leaves a visible trace, namely a state
projection that is a pure exponential in time. The input--state test
comes in two forms: a one-sided certificate, valid for an arbitrary record, and
an exact characterization, valid once the record is informative.

\begin{thm}[Input--state tests for approximate controllability]
  \label{thm:data-fattorini-hautus}
  Suppose that the hypotheses of Proposition~\ref{prop:fattorini-hautus} hold.
  Let $\bar x=x(\cdot;x_0,\bar u)$ be the
  mild solution generated by $(x_0,\bar u)\in X\times L^2(I;U)$, and consider
  the exponential identity
  \begin{equation}\label{eq:data-fh}
    \angl{\eta}{\bar x(t)}_X=\kappa\,e^{\lambda t}
    \qquad(t\in I).
  \end{equation}
  \begin{enumerate}[label=\textup{(\roman*)},leftmargin=*]
    \item\label{item:data-fh-certificate}
      If no $\lambda,\kappa\in\CC$ and $\eta\in X_{\CC}\setminus\{0\}$ satisfy
      \eqref{eq:data-fh}, then $(A,B)$ is approximately controllable.
    \item\label{item:data-fh-exact}
      If $(\bar u,\bar x)$ is informative, then $(A,B)$ is
      approximately controllable if and only if no $\lambda,\kappa\in\CC$ and
      $\eta\in\mathcal D(A^\star)_{\CC}\setminus\{0\}$ satisfy
      \eqref{eq:data-fh}. Whenever such a triple exists,
      $\kappa=\angl{\eta}{x_0}_X\ne0$.
  \end{enumerate}
\end{thm}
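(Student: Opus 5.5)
The engine of both parts is the Fattorini--Hautus characterization (Proposition~\ref{prop:fattorini-hautus}). An obstruction to approximate controllability is a nonzero $\eta\in\mathcal D(A^\star)_{\CC}$ with $A^\star\eta=\lambda\eta$ and $B^\star\eta=0$. For such an $\eta$, the scalar signal $z(t):=\angl{\eta}{\bar x(t)}_X$ should obey $\dot z=\lambda z$, hence be a pure exponential. To justify this for mild solutions, I would test the mild formula \eqref{eq:hilbert-mild-state} against $\eta$:
\[
\angl{\eta}{S(t)x_0}_X=e^{\lambda t}\angl{\eta}{x_0}_X,
\]
since $S(t)^\star\eta=e^{\lambda t}\eta$, which follows because $\eta$ is an eigenvector of the generator $A^\star$ of the adjoint semigroup. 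For the forced part I would use the compatible realization $B_X\in\mathcal L(U,X_{-1})$ of Appendix~\ref{app:control-realization}, with the pairing between $X_{-1}$ and $\mathcal D(A^\star)$:
\[
\angl{\eta}{L_t\bar u}=\int_0^t\angl{S(t-r)^\star\eta}{B_X\bar u(r)}\,\d r=\int_0^t e^{\lambda(t-r)}\angl{B^\star\eta}{\bar u(r)}_U\,\d r=0.
\]
Here the boundedness of $B^\star$ on $\mathcal D(A^\star)$ (a hypothesis of Proposition~\ref{prop:fattorini-hautus}) makes the pairing legitimate, perhaps after approximating $\bar u$ by smooth inputs. Hence $z(t)=\kappa e^{\lambda t}$ with $\kappa=\angl{\eta}{x_0}_X$.

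Part~\ref{item:data-fh-certificate} follows by contraposition. If $(A,B)$ is not approximately controllable, Proposition~\ref{prop:fattorini-hautus} yields such an $\eta\neq0$ and $\lambda$, and the computation above produces a triple satisfying \eqref{eq:data-fh} with $\eta\in X_{\CC}$. The value $\kappa$ may be $0$; this is allowed in~\ref{item:data-fh-certificate}.

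For part~\ref{item:data-fh-exact}, the ``only if'' direction is the substantive one. Suppose $\eta\in\mathcal D(A^\star)_{\CC}\setminus\{0\}$ satisfies \eqref{eq:data-fh}. I would show that $\eta$ is an eigenvector of $A^\star$ lying in $\ker B^\star$, which contradicts Fattorini--Hautus. The first step is to differentiate $z$ weakly. For a mild solution and $\eta\in\mathcal D(A^\star)$, the function $z$ is absolutely continuous with
\[
\dot z(t)=\angl{A^\star\eta}{\bar x(t)}_X+\angl{B^\star\eta}{\bar u(t)}_U \quad\text{a.e.}
\]
This is the standard weak-solution characterization, again via $B_X$. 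Comparing with $\dot z=\lambda z=\lambda\angl{\eta}{\bar x(t)}_X$ gives
\[
\angl{B^\star\eta}{\bar u(t)}_U+\angl{A^\star\eta-\bar\lambda\eta}{\bar x(t)}_X=0 \quad\text{for a.e.\ } t,
\]
where the conjugation conventions need care, since the paper uses the bilinear extension. Next I would split into real and imaginary parts, each a real pair in $U\times X$ annihilating the record. Informativity in $U\times X$ (Definition~\ref{def:hilbert-informative}) then forces $B^\star\eta=0$ and $A^\star\eta=\lambda\eta$. Thus $\eta$ is an obstructing eigenvector and $(A,B)$ is not approximately controllable. The converse direction is exactly the computation of the first paragraph.

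For the final claim I would argue by contradiction. If $\kappa=0$, then $\eta$ annihilates the state, so the pair $(0,\eta)$ annihilates the record, and informativity gives $\eta=0$. Hence $\kappa\neq0$, and the first paragraph identifies $\kappa=\angl{\eta}{x_0}_X$.

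I expect the main obstacle to be the rigorous differentiation of $\angl{\eta}{\bar x(t)}$ with unbounded $B$. This needs the extrapolation-space realization and its duality with $\mathcal D(A^\star)$, and the compatibility of the two adjoints of $B$. The rest is linear algebra together with informativity.
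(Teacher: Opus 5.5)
Your proposal is correct and follows the paper's proof. The paper also differentiates $\angl{\eta}{\bar x(t)}_X$ weakly for $\eta\in\mathcal D(A^\star)_{\CC}$, invokes Fattorini--Hautus for (i) and for the ``if'' part of (ii), applies informativity to the annihilator $(B^\star\eta,\,A^\star\eta-\lambda\eta)$ for the ``only if'' part, and uses $(0,\eta)$ to force $\kappa=\angl{\eta}{x_0}_X\ne0$. The only difference is cosmetic: for an obstructing eigenvector the paper gets the exponential from the same weak-derivative identity, rather than from $S(t)^\star\eta=e^{\lambda t}\eta$ plus a separate duality computation. With the paper's complex-bilinear pairing, the $\bar\lambda$ in your annihilator should simply be $\lambda$, which is what your conclusion $A^\star\eta=\lambda\eta$ already assumes.
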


The proof is given in Appendix~\ref{app:data-fattorini-hautus}.

The two parts trade the hypothesis on the record against the set that has to
be searched. Only the second part is an equivalence, but its search
set is the domain $\mathcal D(A^\star)$, which is not known from data. The first part enlarges the search to the ambient space
$X\supset\mathcal D(A^\star)$, which can only add candidates; the resulting
condition is more demanding, yet it is decided by the record alone and
requires no informativity.

The state projection $\angl{\eta}{\bar x(t)}_X$ can also be recovered from
input--output data, provided that $(A,C)$ is exactly observable: exact
observability makes every continuous state functional representable by a
bounded functional of a length-$T$ input--output window, whose input part
compensates for the forced response. Pairing the shifted windows of the record
with such a functional therefore takes the place of the state projection, and
does so at every admissible shift, giving the following counterparts of
Theorem~\ref{thm:data-fattorini-hautus}, which involve only the measured
pair $(\bar u,\bar y)$.

\begin{thm}[Input--output tests for approximate controllability]
  \label{thm:io-window-controllability}
  Suppose that the hypotheses of Proposition~\ref{prop:fattorini-hautus} hold,
  that Assumption~\ref{hyp:H2} holds, and that $(A,C)$ is exactly observable
  in time $T$. Let $T'>0$ and consider the exponential identity
  \begin{equation}\label{eq:io-window-fh}
    \int_0^T\bigl(\angl{v(t)}{\bar u(s+t)}_U
      +\angl{g(t)}{\bar y(s+t)}_Y\bigr)\,\d t
    =\kappa\,e^{\lambda s},
  \end{equation}
  the pairings being understood on the complexified spaces.
  \begin{enumerate}[label=\textup{(\roman*)},leftmargin=*]
    \item\label{item:io-fh-certificate}
      Let $(\bar u,\bar x,\bar y)$ be the record on $[0,T+T']$ generated by
      $(x_0,\bar u)\in X\times L^2(0,T+T';U)$, and suppose that
      $(\bar u,\bar x)$ is informative on $(0,T+T')$. If no
      $\lambda\in\CC$, $\kappa\in\CC\setminus\{0\}$,
      $v\in L^2(0,T;U_{\CC})$, and $g\in L^2(0,T;Y_{\CC})\setminus\{0\}$
      satisfy \eqref{eq:io-window-fh} for every $s\in(0,T')$, then
      $(A,B)$ is approximately controllable.
    \item\label{item:io-fh-exact}
      Let $(\bar u,\bar x,\bar y)$ be the record on $[0,3T+T']$ generated by
      $(x_0,\bar u)$, and suppose that $(\bar u,\bar x)$ is window informative
      at horizon $3T$. Then $(A,B)$ is approximately controllable if
      and only if no $\lambda\in\CC$, $\kappa\in\CC\setminus\{0\}$,
      $v\in L^2(0,T;U_{\CC})$, and $g\in L^2(0,T;Y_{\CC})$ satisfy
      \eqref{eq:io-window-fh} for every $s\in(0,2T+T')$. Whenever such a
      quadruple exists, $g\ne0$.
  \end{enumerate}
\end{thm}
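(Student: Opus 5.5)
The plan is to reduce both parts to Theorem~\ref{thm:data-fattorini-hautus} applied to shifted states. The bridge is exact observability: a window functional determines a state functional, and conversely.

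For any $(v,g)$ and $s\in(0,T')$, the shift identity of Lemma~\ref{lem:finite-horizon-output} gives $\bar y(s+\cdot)=\mathcal O_T\bar x(s)+\mathcal F_T\bar u(s+\cdot)$. Substituting this, the left side of \eqref{eq:io-window-fh} becomes
\[
\angl{\mathcal O_T^\star g}{\bar x(s)}_X+\int_0^T\angl{v+\mathcal F_T^\star g}{\bar u(s+\cdot)}_U\,\d t .
\]
This is a linear functional of the pair $(\bar u(s+\cdot),\bar x(s))$.

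\textbf{Part (i).} I argue by contraposition. Suppose $(A,B)$ is not approximately controllable. By Proposition~\ref{prop:fattorini-hautus} there are $\lambda$ and $\eta\ne0$ with $A^\star\eta=\lambda\eta$ and $B^\star\eta=0$. Then $\angl{\eta}{\bar x(s)}_X=e^{\lambda s}\angl{\eta}{x_0}_X$. Informativity forces $\kappa:=\angl{\eta}{x_0}_X\ne0$; this is the last claim of Theorem~\ref{thm:data-fattorini-hautus}\ref{item:data-fh-exact}, which can be reused on $(0,T+T')$.

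Exact observability makes $\mathcal O_T^\star$ surjective onto $X$ (closed range theorem). So I pick $g$ with $\mathcal O_T^\star g=\eta$; then $g\ne0$. Next I set $v:=-\mathcal F_T^\star g$, with the adjoint taken for the $L^2$ pairings. The displayed expression then reduces to $\kappa e^{\lambda s}$ for every $s\in(0,T')$. This contradicts the hypothesis.

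\textbf{Part (ii), forward direction.} The implication ``not controllable $\Rightarrow$ a quadruple exists'' is the same construction, now on $s\in(0,2T+T')$. Here $\kappa\ne0$ follows from window informativity, which implies informativity by Lemma~\ref{lem:window-implies-informative}.

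\textbf{Part (ii), converse (main obstacle).} Suppose a quadruple exists with $\kappa\ne0$. Set $\eta:=\mathcal O_T^\star g$ and $w:=v+\mathcal F_T^\star g$. The identity
\[
\angl{\eta}{\bar x(s)}_X+\angl{w}{\bar u(s+\cdot)}=\kappa e^{\lambda s}
\]
then holds on $(0,2T+T')$.

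I would first exploit the semigroup structure. Write $\bar x(s+r)=S(r)\bar x(s)+L_r\bar u(s+\cdot)$ for $r\in[0,T]$, and form both $e^{-\lambda r}\times$(identity at $s+r$) and the identity at $s$. Both equal $\kappa e^{\lambda s}$. Subtracting yields a relation that is linear in $(\bar u(s+\cdot)|_{[0,2T]},\bar x(s))$, with $s$ ranging over $(0,T')$. This is where the horizon-$3T$ window informativity enters: by density of the window syntheses, the relation holds for \emph{all} pairs $(u,\xi)\in L^2(0,2T;U)\times X$.

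Taking $u=0$ gives $S(r)^\star\eta=e^{\lambda r}\eta$ for all $r\in[0,T]$. Hence $\eta\in\mathcal D(A^\star)$ and $A^\star\eta=\lambda\eta$. Taking $\xi=0$ and varying $u$ gives $L_r^\star\eta$ as a combination of shifted copies of $w$ and $w$ itself. Using that the shifted windows contain inputs supported in arbitrarily small sets, I would conclude $B^\star\eta=0$ and $w=0$.

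It remains to show $\eta\ne0$. If $\eta=0$, the identity reads $\angl{w}{\bar u(s+\cdot)}=\kappa e^{\lambda s}$ with $w=0$, which forces $\kappa=0$, a contradiction. So $\eta\ne0$, and $g\ne0$ follows. Proposition~\ref{prop:fattorini-hautus} then shows $(A,B)$ is not approximately controllable.

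The delicate points are the bookkeeping showing that the combined relation involves inputs only on a length-$2T$ window plus one state, which explains the $3T$ horizon, and the deduction $B^\star\eta=0$ from the input part.
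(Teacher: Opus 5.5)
Your proof is correct and has the same skeleton as the paper's. You pull $\ell_{v,g}$ back through $\mathcal M_T$ to $(\alpha,\eta)=(v+\mathcal F_T^\star g,\mathcal O_T^\star g)$, form a shift-difference functional that annihilates the data, use density to make it vanish on all input--state pairs, and then test with zero input and with zero initial state.

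The differences lie in the converse of (ii).

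\textbf{The paper's route.} It takes shifts $r\in(T,2T)$, so the windows $(0,T)$ and $(r,r+T)$ are disjoint and its three test trajectories decouple. It passes through Theorem~\ref{thm:windowed-io-fundamental-lemma} on $\mathcal B_{3T}^{u,y}$. It then recovers $S(\tau)^\star\eta=e^{\lambda\tau}\eta$ for small $\tau$ by comparing two shifts.

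\textbf{Your route.} You take overlapping shifts $r\in[0,T]$ and apply density directly to the state windows. This gives the eigenrelation near $r=0$ at once. It also uses only window informativity at horizon $2T$, together with the identity for $s\in(0,T+T')$. The horizon-$2T$ density follows from the $3T$ hypothesis, because restricting to $(0,2T)$ is a continuous surjection that maps $\mathcal Z_{3T,T'}\phi$ to $\mathcal Z_{2T,T'}\phi$.

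So your remark that the length-$2T$ bookkeeping ``explains the $3T$ horizon'' is off. The $3T$ horizon is the price of the paper's disjoint windows, not of your argument.

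\textbf{The step you left vague.} It is cleanest at $r=T$. After density, $u\in L^2(0,2T;U)$ is arbitrary; small supports are not the point. Taking $\xi=0$ and an input supported in $(T,2T)$ gives $w=0$. Then $\langle\eta,L_ru\rangle_X=\int_0^r e^{\lambda(r-t)}\langle B^\star\eta,u(t)\rangle_U\,\d t=0$ for all $u$, which gives $B^\star\eta=0$.
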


The proof is given in Appendix~\ref{app:io-window-controllability}. For each
finite-dimensional discretization, approximate and exact observability
coincide, although the observability constant may deteriorate under refinement.

\section{Data-driven LQR}\label{sec:lqr}

We now turn to the finite-horizon linear--quadratic regulator (LQR). In the classical setting for infinite-dimensional systems, the optimal control is determined by an operator Riccati equation that requires full knowledge of the model \parencite{pritchardLinearQuadraticControl1987}. Recently, however, \textcite{schmitzContinuoustimeFundamentalLemma2024} demonstrated that for finite-dimensional systems, exactly the same optimal control can be extracted directly from a single informative data record. Using the fundamental lemma established in Section~\ref{sec:fundamental-lemma}, we now extend this data-driven principle to infinite dimensions, in an input--state--output and an input--output form, and show that both return the model-based optimal control.

\subsection{Model-based LQR}\label{subsec:lqr-riccati}

Fix a horizon $T>0$. Let $G\in\mathcal L(V,V')$ be self-adjoint and nonnegative, and let $R\in\mathcal L(U)$ be self-adjoint and coercive, say $\angl{u}{Ru}_U\ge\eps_R\|u\|_U^2$ for some $\eps_R>0$. For an initial state $x_0\in W$, the classical LQR problem seeks to minimize the cost
\begin{equation}\label{eq:lqr-cost}
  J(u;x_0) := \angl{x(T)}{Gx(T)}_{V,V'} + \int_0^T \bigl(\|Cx(t)\|_Y^2+\angl{u(t)}{Ru(t)}_U\bigr)\,\d t ,
\end{equation}
over $u\in L^2(0,T;U)$ subject to the system dynamics \eqref{eq:hilbert-control-formal}.

The Riccati theory of \textcite{pritchardLinearQuadraticControl1987} uses
the compatibility Assumption~\ref{hyp:compat} together with the admissibility
Assumptions~\ref{hyp:H1} and~\ref{hyp:H2}.

\begin{thm}[{\parencite[Thm.~2.7]{pritchardLinearQuadraticControl1987}}]\label{thm:lqr-riccati}
  Suppose Assumptions~\ref{hyp:H1}, \ref{hyp:H2}, and~\ref{hyp:compat} hold. Then the integral operator Riccati equation associated with \eqref{eq:lqr-cost}, with terminal condition $P(T)=G$, has a unique strongly continuous, self-adjoint, nonnegative solution $P:I\to\mathcal L(V,V')$. For every $x_0\in W$, the problem \eqref{eq:lqr-cost} has a unique optimal control and corresponding minimal cost given by
  \begin{equation}\label{eq:lqr-feedback}
    u_F(t)=-R^{-1}B^\star P(t)x(t), \qquad J(u_F;x_0)=\angl{x_0}{P(0)x_0}_{V,V'} .
  \end{equation}
\end{thm}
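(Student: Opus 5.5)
This result is quoted from \textcite[Thm.~2.7]{pritchardLinearQuadraticControl1987}, so the plan is to reconstruct their argument in the present notation. We first reduce to a quadratic minimization on a Hilbert space, then pass to a Riccati representation. Fix $x_0\in W$ and use \eqref{eq:hilbert-mild} to write $x(T)=S(T)x_0+L_Tu$ and $Cx=\mathcal O_Tx_0+\mathcal F_Tu$. Assumption~\ref{hyp:H1} makes $L_T\in\mathcal L(L^2(I;U),W)\subset\mathcal L(L^2(I;U),V)$, and Lemma~\ref{lem:finite-horizon-output} makes $\mathcal F_T$ bounded. Hence
\[
  J(u;x_0)=\angl{u}{\mathcal Ku}+2\angl{u}{\mathcal Nx_0}+\angl{x_0}{\mathcal Mx_0}_{V,V'},
\]
with $\mathcal K=R+\mathcal F_T^\star\mathcal F_T+L_T^\star GL_T$. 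Since $R$ is coercive and the other two terms are nonnegative, $\mathcal K\ge\eps_R$. The Lax--Milgram theorem, or strict convexity together with coercivity, then gives a unique minimizer $u^\star=-\mathcal K^{-1}\mathcal Nx_0$. The minimal cost is a bounded quadratic form in $x_0$ measured in $V$, because Assumption~\ref{hyp:H2} extends $\mathcal O_T$ and $S(T)$ to $V$. This defines $P(0)\in\mathcal L(V,V')$, and the same construction on $[s,T]$ defines $P(s)$, self-adjoint and nonnegative.

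The next step is to show that $u^\star$ is given by the feedback in \eqref{eq:lqr-feedback}. Let $p(t):=P(t)x^\star(t)$. The first-order optimality condition reads $Ru^\star+B^\star p=0$, where $p$ solves the adjoint equation $-\dot p=A^\star p+C^\star Cx^\star$ with $p(T)=Gx^\star(T)$. It is written in mild form as $p(t)=S^\star(T-t)Gx^\star(T)+\int_t^TS^\star(r-t)C^\star Cx^\star(r)\,\d r$. We then need to justify that $B^\star p(t)$ makes sense. By duality, Assumption~\ref{hyp:H1} states that $B^\star S^\star(T-\cdot)$ maps $W'$ boundedly into $L^2$. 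Assumption~\ref{hyp:H2} dually places the integral term in $V'$. Combining these, $B^\star p\in L^2(I;U)$.

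Dynamic programming, that is, optimality of the restriction of $u^\star$ to each $[s,T]$, identifies $p(t)=P(t)x^\star(t)$. This yields the feedback law and $J=\angl{x_0}{P(0)x_0}$. Substituting the feedback into the mild formulas gives the integral Riccati equation
\[
  P(t)\xi=S^\star(T-t)GS_P(T,t)\xi+\int_t^TS^\star(r-t)C^\star CS_P(r,t)\xi\,\d r,
\]
where $S_P$ is the closed-loop evolution operator.

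For uniqueness, take any strongly continuous, self-adjoint, nonnegative solution $\tilde P$ and differentiate $\angl{x(t)}{\tilde P(t)x(t)}$ along an arbitrary trajectory. The completion-of-squares identity
\[
  J(u;x_0)=\angl{x_0}{\tilde P(0)x_0}+\int_0^T\|R^{1/2}(u+R^{-1}B^\star\tilde Px)\|^2\,\d t
\]
then forces $\tilde P=P$. Strong continuity of $P$ in $t$ follows from the strong continuity of $S$ on $W$ and $V$.

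The main obstacle is making these formal manipulations rigorous when $B$ and $C$ are unbounded. In particular, the closed-loop operator $-BR^{-1}B^\star P(t)$ has to define a well-posed evolution on $W$, and the derivative of $\angl{x}{\tilde Px}$ must be justified. For the first point I would rely on Assumption~\ref{hyp:compat}: it lets the computation be performed first for $x_0\in Z$ and smooth $u$, where $x(t)\in Z\hookrightarrow W$ and every pairing is classical. The identities would then be extended by density, using the bounds from Assumptions~\ref{hyp:H1}--\ref{hyp:H2}. The closed-loop evolution itself would be obtained as the solution of a Volterra integral equation, via a contraction argument on short intervals.
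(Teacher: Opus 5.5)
The paper gives no proof of its own here; it imports the result from \textcite{pritchardLinearQuadraticControl1987}, and your outline is the standard route to that theorem and is sound as a plan: the value function defines $P(t)\in\mathcal L(V,V')$, the optimality system and dynamic programming give the feedback and the integral Riccati equation, and completion of squares together with a Volterra construction of the closed loop gives uniqueness. The technical load in the cited work sits in the two steps you leave open: deriving, for an arbitrary candidate $\tilde P$, the weak differential Riccati equation on $Z\times Z$ needed to differentiate $\langle x(t),\tilde P(t)x(t)\rangle$, and checking that your closed-loop form of the equation is equivalent to the usual open-loop form with the $-PBR^{-1}B^\star P$ term.
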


\subsection{The data-driven regulator}\label{subsec:lqr-data}

The model-based formulation relies on knowing $A$, $B$, and $C$. However, the cost function \eqref{eq:lqr-cost} can be rewritten along a trajectory using only the measured signals $(u, x, y)$:
\begin{equation}\label{eq:lqr-cost-traj}
  \mathcal J(u,x,y) := \angl{x(T)}{Gx(T)}_{V,V'} + \int_0^T\bigl(\|y(t)\|_Y^2+\angl{u(t)}{Ru(t)}_U\bigr)\,\d t .
\end{equation}
The system model enters only through the constraint that $(u,x,y)$ be a trajectory. Using the fundamental lemma, we can replace this dynamic constraint with a data-driven one derived entirely from a single record.

Given a pre-recorded input--state--output trajectory $(\bar u,\bar x,\bar y)$, let $\mathcal G:=\mathcal G(\bar u,\bar x,\dot{\bar x},\bar y)$ be its Gramian on $U\times W\times V_{-1}\times Y$. We define the set of data-compatible trajectories for an initial condition $x_0 \in W$ as
\begin{equation}\label{eq:lqr-admissible}
  \mathcal T_{\mathcal G}(x_0) := \left\{ (u,x,y) \;\middle|\;
    \begin{aligned}
      &u\in L^2(I;U),\ y\in L^2(I;Y),\ x\in C(I;W)\cap H^1(I;V_{-1}), \\
      &x(0)=x_0, \text{ and } (u,x,\dot x,y)(t)\in(\ker\mathcal G)^\perp \text{ for a.e.\ } t\in I
    \end{aligned}
  \right\}.
\end{equation}
The \emph{data-driven regulator} minimizes the cost over this graph constraint,
\begin{equation}\label{eq:lqr-data-problem}
  \min_{(u,x,y)\in\mathcal T_{\mathcal G}(x_0)}\ \mathcal J(u,x,y) .
\end{equation}

\begin{thm}[Data-driven LQR]\label{thm:lqr-data}
  Let Assumption~\ref{hyp:H1} hold. Let $(\bar x,\bar y)$ be the mild solution generated by $(\bar x_0,\bar u)$ with $\bar x_0\in W$, and suppose that $(\bar u,\bar x)$ is informative in $U\times W$. Then, for any $x_0 \in W$, the data-driven problem \eqref{eq:lqr-data-problem} has a unique minimizer $(u^\star,x^\star,y^\star)$, where $u^\star$ coincides with the unique minimizer of the model-based problem \eqref{eq:lqr-cost}, and $(x^\star,y^\star)$ is its corresponding mild solution.

  If Assumptions~\ref{hyp:H2} and~\ref{hyp:compat} also hold, the data-driven solution recovers the Riccati-based optimal control and cost exactly:
  \[
    u^\star(t)=-R^{-1}B^\star P(t)x^\star(t), \qquad \min_{(u,x,y)\in\mathcal T_{\mathcal G}(x_0)}\mathcal J(u,x,y) = \angl{x_0}{P(0)x_0}_{V,V'} .
  \]
\end{thm}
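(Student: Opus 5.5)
The plan is to reduce the data-driven problem to the model-based one through Theorem~\ref{thm:willems-gramian}. Since $(\bar u,\bar x)$ is informative in $U\times W$ and $\bar x_0\in W$, the equivalence \ref{item:wfl-dynamics}$\Leftrightarrow$\ref{item:wfl-pointwise} shows that a triple $(u,x,y)$ with $u\in L^2(I;U)$, $y\in L^2(I;Y)$ and $x\in C(I;W)\cap H^1(I;V_{-1})$ lies in $\mathcal T_{\mathcal G}(x_0)$ exactly when it solves $\dot x=Ax+Bu$ in $L^2(I;V_{-1})$, $y=Cx$ and $x(0)=x_0$. By the final assertion of that theorem, $(x,y)$ is then the mild solution generated by $(x_0,u)$. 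Consequently, the map $(u,x,y)\mapsto u$ from $\mathcal T_{\mathcal G}(x_0)$ into $L^2(I;U)$ is injective.

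Next I show that this map is onto. Given any $u\in L^2(I;U)$, let $x=S(\cdot)x_0+L_{(\cdot)}u$ be the mild solution. Assumption~\ref{hyp:H1}, applied on subintervals through the semigroup property and the fact that $S$ restricts to a $C_0$-semigroup on $W$, gives $x\in C(I;W)$ and $y=Cx\in C(I;Y)\subset L^2(I;Y)$. It remains to check $x\in H^1(I;V_{-1})$ with $\dot x=A_{-1}x+Bu$, where $A_{-1}x+Bu$ lies in $L^2(I;V_{-1})$ because $A_{-1}\in\mathcal L(X,V_{-1})$ and $B\in\mathcal L(U,V_{-1})$. This is the standard fact that, in the extrapolation space, a mild solution is a strong solution. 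To prove it, I would test against $\psi$ in the dual of $\mathcal D_{V_{-1}}(A)$, or equivalently work with the resolvent $(r-A_{-1})^{-1}x$. I would first verify the integral identity $x(t)-x_0=A_{-1}\int_0^t x+\int_0^t Bu$ for smooth $u$ and then pass to general $u$ by density, using the continuity of $u\mapsto x$ into $C(I;X)$. This regularity check is the main technical step; everything else is bookkeeping.

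Combining the two steps, the map $u\mapsto(u,x,y)$ is a bijection from $L^2(I;U)$ onto $\mathcal T_{\mathcal G}(x_0)$. Along this bijection, $\mathcal J(u,x,y)=J(u;x_0)$: the output term coincides because $y=Cx$, and the terminal term is well defined because $x(T)\in W\subset V$. Existence and uniqueness of a minimizer of \eqref{eq:lqr-cost} follow from strict convexity and coercivity. The cost is a continuous quadratic in $u$, since $u\mapsto x(T)\in W\hookrightarrow V$ and $u\mapsto Cx\in L^2(I;Y)$ are affine bounded maps, and its quadratic part dominates $\eps_R\|u\|^2$ because $G\ge0$. Alternatively, under the extra assumptions one may cite Theorem~\ref{thm:lqr-riccati}. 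The unique minimizer $u^\star$ therefore transfers to a unique minimizer $(u^\star,x^\star,y^\star)$ of \eqref{eq:lqr-data-problem}, and $(x^\star,y^\star)$ is its mild solution.

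Finally, if Assumptions~\ref{hyp:H2} and~\ref{hyp:compat} also hold, Theorem~\ref{thm:lqr-riccati} identifies the model-based minimizer with the feedback $u_F$ in \eqref{eq:lqr-feedback}. Since $u^\star=u_F$ and $x^\star=x(\cdot;x_0,u_F)$, we obtain $u^\star(t)=-R^{-1}B^\star P(t)x^\star(t)$. Moreover, the minimal data-driven cost equals $J(u_F;x_0)=\angl{x_0}{P(0)x_0}_{V,V'}$.
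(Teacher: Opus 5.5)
Your proposal is correct and follows the paper's own proof. The paper likewise uses the equivalence \ref{item:wfl-dynamics}$\Leftrightarrow$\ref{item:wfl-pointwise} of Theorem~\ref{thm:willems-gramian} to make $u\mapsto\bigl(u,x(\cdot\,;x_0,u),Cx(\cdot\,;x_0,u)\bigr)$ a bijection onto $\mathcal T_{\mathcal G}(x_0)$, transfers the cost, minimizes the coercive quadratic functional, and then invokes Theorem~\ref{thm:lqr-riccati}. The one difference is the regularity step $x\in H^1(I;V_{-1})$: you derive it by hand, whereas the paper reuses Step~1 of the proof of Theorem~\ref{thm:willems-gramian} (via Ball's theorem), which already applies to an arbitrary input $u$.
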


The proof is given in Appendix~\ref{app:lqr-data}.

\subsection{The input--output regulator}\label{subsec:lqr-data-io}

Problem \eqref{eq:lqr-data-problem} reads the state twice: in the record,
through the Gramian $\mathcal G$, and in the initial condition $x(0)=x_0$. When
only input--output data is available, neither is at hand. Following
Section~\ref{subsec:io-behavior}, we let a measured window take the place of the
state: we fix a conditioning horizon $T_0>0$, prescribe an input--output window
$(u_{\rm ini},y_{\rm ini})\in L^2(0,T_0;U\times Y)$, and regulate over the next
$T$ units of time. The optimization then runs over input--output pairs of length
$T_0+T$, the record being measured on $[0,T_0+T+T']$. Since the terminal state
is not measured, we set $G=0$; the \emph{input--output regulator} associated
with a pre-recorded trajectory $(\bar u,\bar y)$ is
\begin{equation}\label{eq:lqr-data-io-problem}
  \begin{aligned}
    \min_{(u,y)}\quad
      &\int_0^T\bigl(\|y(T_0+t)\|_Y^2
        +\angl{u(T_0+t)}{Ru(T_0+t)}_U\bigr)\,\d t\\
    \text{subject to}\quad
      &(u,y)\in\overline{\im\mathcal Y_{T_0+T,T'}^{\bar u,\bar y}},
      \qquad
      (u,y)\vert_{(0,T_0)}=(u_{\rm ini},y_{\rm ini}) .
  \end{aligned}
\end{equation}
The two constraints take over the two roles of the state. The first is the
data-driven form of the dynamics, licensed by
Theorem~\ref{thm:windowed-io-fundamental-lemma}; the second is the data-driven
form of the initial condition, since by Lemma~\ref{lem:io-parametrization}
the conditioning window determines the state at time $T_0$ as soon as $(A,C)$ is
observable in time $T_0$. Neither involves the state itself.

\begin{thm}[Data-driven LQR, input--output]\label{thm:lqr-data-io}
  Suppose Assumptions~\ref{hyp:H1}, \ref{hyp:H2}, and~\ref{hyp:compat} hold,
  that $G=0$, and that $(A,C)$ is exactly observable in time $T_0$. Let
  $(\bar u,\bar x,\bar y)$ be the record on $[0,T_0+T+T']$ generated by
  $(\bar x_0,\bar u)\in X\times L^2(0,T_0+T+T';U)$, and suppose that
  $(\bar u,\bar x)$ is window informative at horizon $T_0+T$. Let
  $(u_{\rm ini},y_{\rm ini})$ be the input--output window generated by
  $(x_{\rm ini},u_{\rm ini})\in W\times L^2(0,T_0;U)$, and set
  $x_0:=x(T_0;x_{\rm ini},u_{\rm ini})\in W$. Then the input--output problem
  \eqref{eq:lqr-data-io-problem} has a unique minimizer $(u^\star,y^\star)$,
  whose regulated input $u^\star(T_0+\cdot)$ is the unique minimizer of the
  model-based problem \eqref{eq:lqr-cost} for the initial state $x_0$. In
  particular, the two problems have the same optimal value.
\end{thm}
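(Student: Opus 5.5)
The plan is to reduce the input--output problem \eqref{eq:lqr-data-io-problem} to the model-based problem \eqref{eq:lqr-cost} with $G=0$ and initial state $x_0$, and then invoke Theorem~\ref{thm:lqr-riccati} for existence and uniqueness. The first step applies Theorem~\ref{thm:windowed-io-fundamental-lemma} at horizon $T_0+T$. Window informativity at that horizon gives that the feasible set of dynamics is
\[
\overline{\im\mathcal Y_{T_0+T,T'}^{\bar u,\bar y}}=\overline{\mathcal B_{T_0+T}^{u,y}}.
\]
The main technical point is to show that $\mathcal B_{T_0+T}^{u,y}$ is already closed, so that every feasible $(u,y)$ has the form $y=\mathcal O_{T_0+T}\xi+\mathcal F_{T_0+T}u$ for some $\xi\in X$. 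For this I would use exact observability in time $T_0$: observability on the shorter horizon implies exact observability on $[0,T_0+T]$, because the $L^2(0,T_0+T)$ norm dominates the $L^2(0,T_0)$ norm. Now take $(u_n,y_n)\in\mathcal B$ converging to $(u,y)$ with parameters $\xi_n$. Then
\[
\mathcal O\xi_n=y_n-\mathcal F u_n
\]
is Cauchy, and the lower bound in Definition~\ref{def:observability} makes $\xi_n$ Cauchy in $X$. Its limit $\xi$ parametrizes $(u,y)$, since $\mathcal O$ and $\mathcal F$ are bounded by Lemma~\ref{lem:finite-horizon-output}. So the feasible pairs are exactly the input--output trajectories of length $T_0+T$.

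The second step uses the initial constraint to pin down the latent state at time $T_0$. Restrict a feasible $(u,y)$ to $(0,T_0)$. By the restriction part of Lemma~\ref{lem:finite-horizon-output} (with $s=0$, $\rho=T_0$),
\[
y_{\rm ini}=\mathcal O_{T_0}\xi+\mathcal F_{T_0}u_{\rm ini}.
\]
We also have $y_{\rm ini}=\mathcal O_{T_0}x_{\rm ini}+\mathcal F_{T_0}u_{\rm ini}$. Injectivity of $\mathcal O_{T_0}$ then gives $\xi=x_{\rm ini}$, and hence $x(T_0;\xi,u)=x(T_0;x_{\rm ini},u_{\rm ini})=x_0\in W$, using causality of the mild solution \eqref{eq:hilbert-mild-state}. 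The shift identity of Lemma~\ref{lem:finite-horizon-output} with $s=T_0$, $\rho=T$ then yields
\[
y(T_0+\cdot)=\mathcal O_T x_0+\mathcal F_T u(T_0+\cdot).
\]
Since $x_0\in W$, this is exactly the output \eqref{eq:hilbert-mild-output} of the model-based system started at $x_0$ under the input $u(T_0+\cdot)$.

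Conversely, for any $w\in L^2(0,T;U)$, concatenate $u_{\rm ini}$ with $w$ and take the output generated from $x_{\rm ini}$. This pair is in $\mathcal B_{T_0+T}^{u,y}$, hence feasible, and it matches the initial window. Together with the previous paragraph, the map $(u,y)\mapsto u(T_0+\cdot)$ is a bijection from the feasible set onto $L^2(0,T;U)$. Its inverse is well defined because $y$ is determined by $u$ once $\xi=x_{\rm ini}$ is fixed. Under this bijection the data cost equals $J(w;x_0)$ with $G=0$. Theorem~\ref{thm:lqr-riccati} gives a unique minimizer of $J(\cdot;x_0)$, which transfers to a unique minimizer $(u^\star,y^\star)$ with equal optimal values.

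I expect the closedness argument in the first step to be the main obstacle. It requires care that the latent parameter ranges over all of $X$ rather than $W$, and that exact observability on the longer horizon follows from observability on $[0,T_0]$.
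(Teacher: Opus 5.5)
Your proposal is correct and follows essentially the same route as the paper. Both proofs identify the feasible set with the closed behavior $\mathcal B_{T_0+T}^{u,y}$ via Theorem~\ref{thm:windowed-io-fundamental-lemma} and exact observability, use the conditioning window and injectivity of $\mathcal O_{T_0}$ to force the latent state to equal $x_{\rm ini}$, and apply the shift identity to transport the cost to $J(\cdot\,;x_0)$. The differences are minor: you prove closedness by a direct Cauchy-sequence argument rather than citing the bounded-below property of $\mathcal M_{T_0+T}$ from Lemma~\ref{lem:io-parametrization}, and you take uniqueness of the model-based minimizer from Theorem~\ref{thm:lqr-riccati} rather than from coercivity of its quadratic part.
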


The proof is given in Appendix~\ref{app:lqr-data-io}. 

\section{Numerical experiments}\label{sec:numerics}

We illustrate the constructive results of the paper on the three examples from
the introduction, which represent the three families of
\textcite[§4]{pritchardLinearQuadraticControl1987}. 
We use them to test the state and input--output controllability criteria,
compare graph-based and windowed realizations of the LQR, and examine the
harmonic input of Lemma~\ref{lem:harmonic-pe}. Throughout, \emph{i-s}, \emph{i-s-o}, and
\emph{i-o} label a method by the data it reads: input--state, input--state--output,
and input--output. All values, figures, and tables are generated by
the accompanying Python code.\footnote{Available at
\url{https://github.com/DCN-FAU-AvH/data-driven-control}.}


The PDEs and the delay equation are discretized by continuous piecewise-linear
finite elements on a uniform mesh; time integration uses
Crank--Nicolson on $t_k=k\Delta t$. Inner products are evaluated in the
corresponding discrete $X$- or $W$-metric, and throughout $m=\dim U=1$. The
meshes carry between $4$ and $24$ elements, the horizons are of order $10$,
and $\Delta t$ ranges from $10^{-3}$ to $2\cdot10^{-2}$; these settings, the
probing signals, and the tolerances are fixed in the accompanying code.

The outputs are those of the introduction: point observation at
$\xi_0=0.6$ for heat, a smooth integral observation for wave, and the delayed
first coordinate for the delay system. None of these continuum pairs is
exactly observable, and discretization need not restore observability: the
symmetric heat and wave sensors miss the same spatial modes on the meshes
used here. The input--output experiments therefore test only the observable,
numerically resolved part of the behavior, and do not verify all hypotheses
of Theorem~\ref{thm:io-window-controllability} on the full state space.

We use the following convention for the spectra. Let $\mu_1\ge\mu_2\ge\cdots\ge\mu_d\ge0$
denote either the eigenvalues of a metric-weighted Gramian or the squared
singular values of a metric-weighted moment matrix, as specified in each
experiment. At the fixed relative tolerance $\varepsilon$, define
\begin{equation}\label{eq:num-resolved}
  r_\varepsilon:=\#\{j:\mu_j>\varepsilon\mu_1\}.
\end{equation}
We call $r_\varepsilon$ the number of \emph{resolved directions} and use
$\varepsilon=10^{-10}$ throughout, the two LQR implementations applying this
tolerance at different powers.
Full numerical rank is the discrete counterpart of Gramian injectivity in
Lemma~\ref{lem:informative-gramian}. In the infinite-dimensional limit, the
Gramian is compact and its eigenvalues approach zero, so $r_\varepsilon<d$
means that the record supports conclusions only on the resolved subspace; it
does not classify the remaining directions.

\subsection{Data-driven controllability test}
\label{subsec:num-controllability}

We set $\nu=0.02$ and $c=1/2$, so that the leading heat and wave modes lie in
the band of the probing multisine. For the sine test functions
$\varphi_i(t)=\sin(i\pi t/T)\in H_0^1(I)$, we form the data moments
\[
  X_0^i=\int_I\varphi_i \bar x\,\d t,\qquad
  X_1^i=-\int_I\dot\varphi_i \bar x\,\d t,\qquad
  U_0^i=\int_I\varphi_i \bar u\,\d t .
\]
Let $U_0$, $X_0$, and $X_1$ denote the matrices whose columns are these
moments. They satisfy $X_1=A_hX_0+B_hU_0$ up to time-integration and
quadrature error, the relative residual being below $2\cdot10^{-5}$ in all
six runs. For the resolved count in
\eqref{eq:num-resolved}, the $\mu_j$ are the squared singular values of
$\col(U_0,X_0)$ in the metric of $U\times H$, with $H=X$ for
heat and $H=W$ for wave and delay. Here $d=d_h:=m+n$.

By $X_1=A_hX_0+B_hU_0$, a covector $\eta$ with $A_h^\top\eta=\lambda\eta$
and $B_h^\top\eta=0$ satisfies $\eta^\top X_1=\lambda\,\eta^\top X_0$, and the
converse holds once $\col(U_0,X_0)$ has full row rank. Following
Theorem~\ref{thm:data-fattorini-hautus}\,\ref{item:data-fh-exact} we
therefore take the left eigenvectors of the pencil $(X_1,X_0)$ as candidates
and fit $\eta^\top\bar x(t)=\kappa\e^{\lambda t}$ to each.
The \emph{found} column of Table~\ref{tab:num-controllability} reports the
retained candidates in the \emph{i-s} rows.

Each system is tested in a controllable and an uncontrollable configuration.
The one-sided heat and wave controls reach every mode. Their symmetric
comparisons apply the same scalar control at both endpoints and cannot reach
the modes antisymmetric about $\xi=1/2$; the heat comparison imposes Dirichlet
data at both ends, which is why its state dimension differs from the one-sided
case. The delay model uses $M=0$ and
$L\phi=A_0\phi(0)+A_1\phi(-h)$, with
$A_0=\left(\begin{smallmatrix}-1&1/2\\ a&-1/2\end{smallmatrix}\right)$ and
$A_1=\left(\begin{smallmatrix}-3/10&1/5\\ b&-2/5\end{smallmatrix}\right)$, with
$(a,b)=(2/5,1/4)$ coupled and $(0,0)$ decoupled; in the latter the second
coordinate receives neither the input nor any coupling from the first.

\paragraph{The same test from input--output data.}
The \emph{i-o} rows test the exponential identity of
Theorem~\ref{thm:io-window-controllability}\,\ref{item:io-fh-exact}
on the same six configurations, over shifted windows of length $T$. If $n_w$ is
the number of samples in one such window, we use the nominal dimension
$d=b_h:=mn_w+n$ as a reference; the sampled behavior is smaller when the
discrete pair is unobservable. Since
\eqref{eq:io-window-fh} requires $\e^{\lambda s}$ to lie in the span of the
window pairings, we evaluate
\begin{equation*}
  \varrho(\lambda):=\frac{\operatorname{dist}_{L^2(0,2T+T')}
    \bigl(\e^{\lambda s},\ \mathcal P\bigr)}
    {\|\e^{\lambda s}\|_{L^2(0,2T+T')}},
  \quad
  \mathcal P:=\Bigl\{s\mapsto\int_0^T\bigl(\angl{v(t)}{\bar u(s+t)}_U
    +\angl{g(t)}{\bar y(s+t)}_Y\bigr)\,\d t\Bigr\}
\end{equation*}
where $v$ and $g$ range over $L^2(0,T;U_{\CC})$ and $L^2(0,T;Y_{\CC})$, the
candidates $\lambda$ being the eigenvalues of the shift pencil formed on the
resolved window directions, refined locally against $\varrho$. We retain a
candidate when $\varrho$ falls below $3\cdot10^{-3}$.

The two tests use different excitations. A multisine probes the resolved
directions for the state test, but its finitely many lines do not provide a
spanning input-window library. The \emph{i-o} records therefore use a
pseudorandom binary signal (PRBS).

\begin{table}[!htbp]
  \centering
  \caption{Controllability tests from input--state (i-s) and input--output
  (i-o) records. \emph{Found} counts the detected obstructions. The marks give
  the model-based status.}
  \label{tab:num-controllability}
  {\small\setlength{\tabcolsep}{4pt}\begin{tabular}{l@{\hspace{4pt}}clrll}
\toprule
\multicolumn{2}{l}{Configuration} & data & found & recovered $\lambda$ &
exact $\lambda$ \\
\midrule
heat, one-sided & \cmark & i-s & 0 & --- & --- \\
 &  & i-o & 0 & --- & --- \\
\addlinespace
heat, symmetric & \xmark & i-s & 1 & $-0.783$ & $-0.790$ \\
 &  & i-o & 1 & $-0.783$ & $-0.790$ \\
\addlinespace
wave, one-sided & \cmark & i-s & 0 & --- & --- \\
 &  & i-o & 0 & --- & --- \\
\addlinespace
wave, symmetric & \xmark & i-s & 2 & $0\pm3.129\mathrm{i}$ & $0\pm3.142\mathrm{i}$ \\
 &  & i-o & 2 & $0\pm3.128\mathrm{i}$ & $0\pm3.142\mathrm{i}$ \\
\addlinespace
delay, coupled & \cmark & i-s & 0 & --- & --- \\
 &  & i-o & 0 & --- & --- \\
\addlinespace
delay, decoupled & \xmark & i-s & 13 & $-1.102\pm1.051\mathrm{i}$ & $-1.106\pm1.046\mathrm{i}$ \\
 &  & i-o & 2 & $-1.101\pm1.048\mathrm{i}$ & $-1.106\pm1.046\mathrm{i}$ \\
\bottomrule
\end{tabular}
}
\end{table}

Both tests recover the expected obstruction in every uncontrollable
configuration and none in the resolved subspaces of the controllable ones.
This absence of detection is not a certificate beyond the resolved subspace.
For the representative exponents in Table~\ref{tab:num-controllability},
the two tests agree within $0.2\%$ in relative complex modulus, and both
agree with the continuum modes within $1\%$. The window test detects fewer
roots of the decoupled delay system because a finite output window resolves
fewer modes than direct state measurements.



\subsection{Data-driven LQR}
\label{subsec:num-lqr}

We compare two realizations of the data-driven regulator. The graph method,
labeled \emph{i-s-o}, uses input, state, and output data together with the
synthesis characterization~\ref{item:wfl-synthesis} of
Theorem~\ref{thm:willems-gramian}. The window method, labeled \emph{i-o}, uses
only input--output data and minimizes the cost over superpositions of measured
windows, as licensed by Theorem~\ref{thm:windowed-io-fundamental-lemma}.
Theorem~\ref{thm:lqr-data-io} establishes its optimality under exact
observability in time $T_0$, which fails for the continuum examples. The
coarse discretizations used here are observable, but the implementation
truncates the window library and penalizes rather than enforces the
conditioning match, so the guarantee does not carry over verbatim: the
penalty can lower the reported cost, whereas the truncation can raise it.

We use $\nu=1$ for the heat equation because at $\nu=0.02$ its optimal input is
nearly zero. We take $R=\tfrac1{20}\operatorname{id}$ and $G=0$; since the terminal cost vanishes, both methods
minimize the same functional of the measured signals. A preceding conditioning
window of length $T_0$ determines the initial state $x_0$ of each
discretized plant.

For $q$ test functions, let $U_0$, $X_0$, $X_1$, and $Y_0$ denote the weak
moments of the measured input, state, state derivative, and output. The graph
method forms
\[
  Z_q=\col(U_0,X_0),
  \qquad
  D_q=\col(U_0,X_0,X_1,Y_0).
\]
With moments taken consistently with Crank--Nicolson, $D_q=\Gamma_hZ_q$ holds
up to solver accuracy for the graph operator \eqref{eq:graph-operator} of the
discretized system. Thus, when $Z_q$ has full row rank $m+n$,
$\im D_q=\im\Gamma_h$. A metric-weighted SVD gives an annihilator $K$ with
$\ker K=\im D_q$. On the grid $t_k=k\Delta t$, the resulting regulator is
the quadratic program
\begin{equation}\label{eq:lqr-graph-discrete}
  \begin{aligned}
    \min_{x_k,u_k,y_k}\quad&
    \angl{x_N}{Gx_N}
    +\Delta t\sum_{k=0}^{N-1}
      \bigl(\|y_k\|_Y^2+\angl{u_k}{Ru_k}_U\bigr),\\
    \text{subject to}\quad&x_0=x_{0,h},\quad K
    \begin{bmatrix}
      u_k\\
      (x_k+x_{k+1})/2\\
      (x_{k+1}-x_k)/\Delta t\\
      y_k
    \end{bmatrix}=0,
    \qquad k=0,\ldots,N-1 .
  \end{aligned}
\end{equation}
At full row rank, this constraint is the Crank--Nicolson discretization of
the dynamics expressed through data rather than $A_h$, $B_h$, and $C_h$.

The window method uses $p$ shifted input--output windows of length $T_0+T$.
With the time origin placed at the end of the conditioning window, exact
matching of their restriction to $[-T_0,0]$ would fix the discrete initial
state; the implementation instead minimizes the future trajectory cost plus
$\rho^{-1}$ times the squared $L^2$ conditioning error, with $\rho=10^{-8}$.
The reported $J_{\rm dd}$ excludes that penalty.

The graph has dimension $d_h=m+n$, and the nominal sampled-window behavior
dimension is $b_h=mn_w+n$, the windows now being of length $T_0+T$ so that
$n_w$ counts the samples of that longer window. We use the spanning trial sizes
\[
  q=2d_h,\qquad p=2b_h.
\]
The Riccati solution provides the reference cost $J_\star$. We report
\[
  e_J=\frac{|J_{\rm dd}-J_\star|}{J_\star},
  \qquad
  e_J^{\rm plant}=\frac{|J(u_{\rm dd};x_0)-J_\star|}{J_\star} .
\]

At $q=2d_h$ all three graph matrices have full resolved rank; $e_J$ is
negligible and $e_J^{\rm plant}$ sits at the level of the time discretization.
At $p=2b_h$ the window libraries resolve all but a few nominal directions,
the shortfall reflecting the conditioning of the sampled library rather than
the closure in Theorem~\ref{thm:windowed-io-fundamental-lemma}. In
Table~\ref{tab:num-lqr}, the window values of $e_J$ are two to five orders of
magnitude above the graph values; the two implementations also truncate at
different thresholds. That $e_J$ and $e_J^{\rm plant}$ agree closely indicates
that the reconstructed costs are consistent with the plant.

The heat refinement in Figure~\ref{fig:num-lqr} gives observed convergence
orders $2.00$ and $1.00$ for the graph and window methods, respectively. The latter inherits one measured input sample
at the junction $t=0$, producing an $O(\Delta t)$ perturbation; otherwise both
methods closely reproduce the Riccati trajectory in the plotted example.
The wave library has full resolved rank, so its residual error comes from
conditioning, the penalized past match, and time discretization rather than
from unresolved directions.

\begin{figure}[!htbp]
  \centering
  \includegraphics[width=\textwidth]{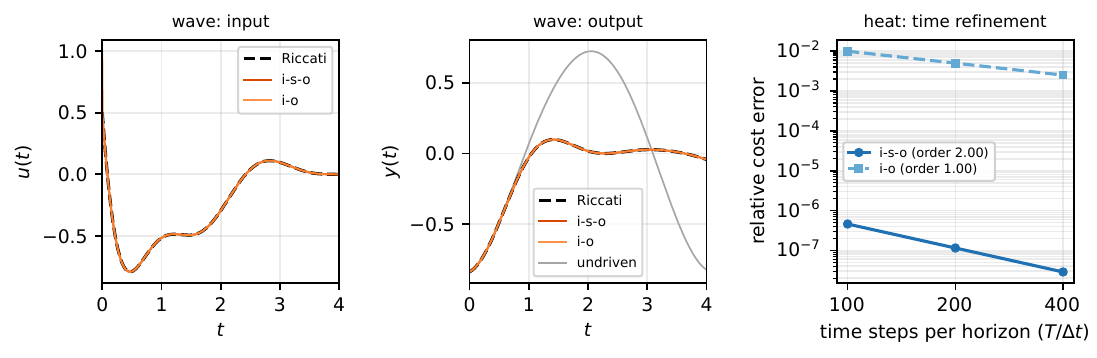}
  \caption{Data-driven finite-horizon LQR. Left and center: wave input and
  output compared with the Riccati solution; the gray curve is the uncontrolled
  output. Right: heat cost error versus the number of time steps.}
  \label{fig:num-lqr}
\end{figure}

\begin{table}[!htbp]
  \centering
  \caption{LQR results at $q=2d_h$ and $p=2b_h$; $e_J^{\rm plant}$ evaluates the
  recovered input directly on the plant.}
  \label{tab:num-lqr}
  \begin{tabular}{llrrr}
\toprule
Example & data & $J_{\rm dd}$ & $e_J$ & $e_J^{\rm plant}$ \\
\midrule
heat & i-s-o & \num{0.291} & \num{2.872e-08} & \num{2.150e-05} \\
heat & i-o & \num{0.292} & \num{2.503e-03} & \num{2.502e-03} \\
\addlinespace
wave & i-s-o & \num{0.323} & \num{1.443e-06} & \num{7.068e-05} \\
wave & i-o & \num{0.323} & \num{2.861e-04} & \num{2.912e-04} \\
\addlinespace
delay & i-s-o & \num{7.624e-03} & \num{3.796e-07} & \num{5.102e-05} \\
delay & i-o & \num{7.898e-03} & \num{0.0359} & \num{0.0361} \\
\bottomrule
\end{tabular}

\end{table}

\subsection{Excitation and conditioning}
\label{subsec:num-conditioning}

Finally, we compare a ten-frequency truncation of the harmonic input in
Lemma~\ref{lem:harmonic-pe} against a well-separated multisine and a PRBS; see
Figure~\ref{fig:num-conditioning}. For this experiment, the $\mu_j$ in
\eqref{eq:num-resolved} are the eigenvalues of the input--state Gramian
$\mathcal G(u,x)$ on $U\times X$ for heat and $U\times W$ for wave.

The heat equation is of the analytic type covered by
Example~\ref{ex:boundary-heat-informative}; the wave group is nonanalytic and
is included as a comparison. The accumulation of frequencies used in the
proof of harmonic informativity has a numerical cost: on a finite time
interval, nearby sinusoids are nearly linearly dependent. On the heat equation
the harmonic and multisine inputs therefore resolve the same $9$ of $26$
directions, and the PRBS two more; of the two sinusoidal signals, the
multisine keeps its weakest resolved eigenvalue nearly five times above the
harmonic input's. For the non-smoothing wave equation the distinction
disappears: all three inputs resolve every direction, and their spectra decay
by only a few orders of magnitude, against the ten orders that separate the
heat spectra from the tolerance. This comparison concerns finite-precision conditioning;
Theorem~\ref{thm:analytic-universal-sufficiency} concerns density for the
untruncated harmonic signal.

\begin{figure}[!htbp]
  \centering
  \includegraphics[width=\textwidth]{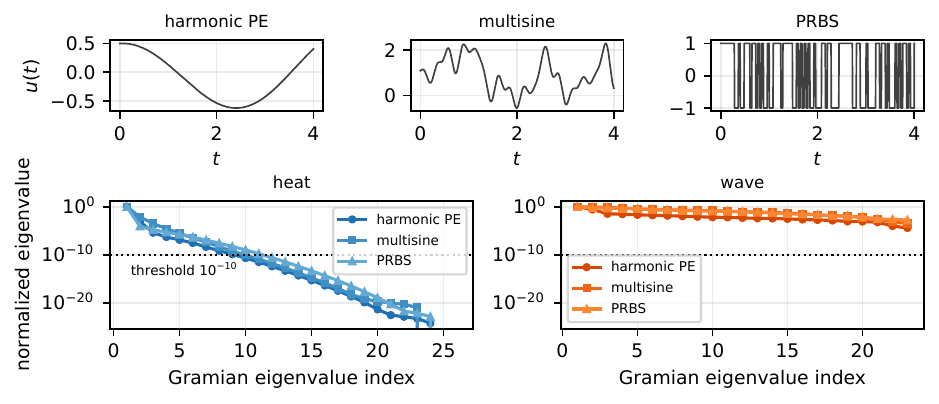}
  \caption{Inputs and Gramian conditioning. Top: $u(t)$ over the first four
  units of the observation interval. Bottom: normalized Gramian eigenvalues
  $\mu_j/\mu_1$, ordered by index $j$, for heat (left) and wave (right). Values
  above the dotted line count toward $r_\varepsilon$ in
  \eqref{eq:num-resolved}.}
  \label{fig:num-conditioning}
\end{figure}

\section{Conclusions and open problems}
\label{sec:conclusion}

We have extended data informativity, persistency of excitation, and Willems'
fundamental lemma to semigroup dynamics with unbounded control and observation
operators. The central finding is that persistency of excitation, while still
necessary for a universally informative input, is---unlike in finite
dimensions---no longer sufficient; harmonic persistency of excitation restores
sufficiency for analytic semigroups. We also derived data-driven controllability
tests and showed how a single informative record can be used to formulate a
data-driven LQR problem.

On the numerical side, we tested the proposed controllability test and regulator, and assessed the conditioning of the harmonic excitation, on representative examples involving linear PDEs and delay differential equations.

Several questions remain open. The convergence and stability properties of the numerical schemes require a more detailed analysis. The input--output theory developed here should be extended to allow weaker observability assumptions and informativity conditions that can be verified from input--output measurements alone. A central challenge is to identify sufficient conditions for informativity beyond analytic semigroups. More broadly, the theory of data informativity \parencite{vanwaardeDataInformativityNew2020, vanwaardeInformativityApproachDataDriven2023} should be further extended to the infinite-dimensional setting, including properties such as stabilizability and dissipativity.

\section*{Acknowledgments}

The author is deeply grateful to Enrique Zuazua for his insightful feedback,
 encouragement, and continuous support throughout the development of
this work.

\subsection*{Funding}

This work has been supported by the Alexander von Humboldt-Professorship
program of the Alexander von Humboldt Foundation, funded by the German Federal
Ministry of Education and Research (BMBF), at the Chair for Dynamics, Control,
Machine Learning, and Numerics, Friedrich-Alexander-Universit\"at
Erlangen-N\"urnberg.

\appendix
\section{Proofs}
\label{app:proofs}

\subsection{Proof of Lemma~\ref{lem:finite-horizon-output} (finite-horizon input--output operators)}
\label{app:finite-horizon-output}
Translating an input on $(0,t)$ to the terminal part of $(0,T)$ in
Assumption~\ref{hyp:H1} gives
\[
  \|L_tu\|_W\le b\|u\|_{L^2(0,t;U)},\qquad t\le T.
\]
Repeated interval splitting and the semigroup on $W$ therefore yield, for
each finite $\tau$, a constant $b_\tau$ such that
$\sup_{t\le\tau}\|L_tu\|_W\le b_\tau\|u\|_{L^2(0,\tau;U)}$. Extending
$u$ by zero outside $(0,\tau)$, write $L_tu=L_\tau u_t$, where
$u_t(r)=u(t-\tau+r)$ for $0<r<\tau$. Continuity of translations in
$L^2$ then gives $L_\cdot u\in C([0,\tau];W)$. Boundedness of
$\mathcal F_\tau$ follows from $C\in\mathcal L(W,Y)$.  The same splitting,
now using Assumption~\ref{hyp:H2} and the semigroup bounds, proves
boundedness of $\mathcal O_\tau$.  For $x_0\in W$, the output formula and
shift identity follow by splitting the mild solution at $s$.  Density of
$W$ in $X$ and boundedness of the operators extend both identities to
$x_0\in X$.
\qed

\subsection{Compatible control operator on the state space}
\label{app:control-realization}

Let $A_V$ and $A_{X,-1}$ be the generators on $V$ and $X_{-1}$, respectively.
For sufficiently large real $r$, set $Q_r:=(r\operatorname{id}-A_V)^{-1}B$.
The finite-time Laplace identity gives
\[
  (\operatorname{id}-e^{-rT}S(T))Q_rv=L_Tu_r,
  \qquad u_r(t):=e^{-r(T-t)}v.
\]
The operator on the left is invertible on both $V$ and $W$, with consistent
inverses. Assumption~\ref{hyp:H1} therefore gives $Q_r\in\mathcal L(U,W)$ and
\begin{equation}\label{eq:control-resolvent-decay}
  \|Q_r\|_{\mathcal L(U,W)}
  \le b\|(\operatorname{id}-e^{-rT}S(T)|_W)^{-1}\|
       \sqrt{\frac{1-e^{-2rT}}{2r}}
  =O(r^{-1/2}).
\end{equation}
For one such $a$, define
\[
  B_X:=(a\operatorname{id}-A_{X,-1})Q_a\in\mathcal L(U,X_{-1}).
\]
The resolvent identity, using consistency of the resolvents on $X$ and $V$
for large real parameters, shows that this definition is independent of $a$
and that $(r\operatorname{id}-A_{X,-1})^{-1}B_X=Q_r$ for all sufficiently
large $r$. Equality of Laplace transforms in $X_{-1}$ then identifies its
control convolution with $L_t$, first for compactly supported inputs and
hence on every finite interval. Finally, for a constant input $v$, the vector
$t^{-1}L_tv$ converges to $Bv$ in $V$ and to $B_Xv$ in $X_{-1}$ as $t\downarrow0$.
Pairing with any $\eta\in V'\cap\mathcal D(A^\star)$ proves that the two
adjoints agree on this intersection. The adjoint of $B_X$ is bounded on
$\mathcal D(A^\star)$ equipped with its graph norm, since this is the pivot
dual of $X_{-1}$ \parencite[Sec.~2.10]{tucsnakObservationControlOperator2009}.

\subsection{Proof of Lemma~\ref{lem:informative-gramian} (Gramian characterization of informative data)}
\label{app:informative-gramian}
The integrand in \eqref{eq:hilbert-gramian} has trace norm $\|z(t)\|_H^2$,
which is integrable on $I$, so the integral converges as a Bochner integral
in the trace class. As an integral of nonnegative, self-adjoint rank-one
operators, $\mathcal G(z)$ is itself self-adjoint, nonnegative, and
trace-class. For $\zeta\in H$,
\[
  \angl{\mathcal G(z)\zeta}{\zeta}_H
    =\int_0^T|\angl{z(t)}{\zeta}_H|^2\,\d t ,
\]
so $\mathcal G(z)\zeta=0$ if and only if $\angl{z(t)}{\zeta}_H=0$ for a.e.\
$t\in I$, which is \eqref{eq:gramian-kernel}. Applying this with $U\times H$
in place of $H$ and $z=(\bar u,\bar x)$, injectivity of $\mathcal G(\bar u,\bar x)$ is
therefore equivalent to $(\cspan(\bar u,\bar x))^{\perp}=\{0\}$, that is, to
\eqref{eq:informative-span}. In the finite-dimensional case the closure is
superfluous and the algebraic span suffices.
\qed

\subsection{Proof of Lemma~\ref{lem:window-implies-informative} (window informativity implies informativity)}
\label{app:window-implies-informative}
Let $(\alpha,\beta)\in U\times X$ annihilate the record, so that
$\angl{\alpha}{\bar u(t)}_U+\angl{\beta}{\bar x(t)}_X=0$ for a.e.\
$t\in(0,T+T')$. By translation invariance this holds, for \emph{every}
$s\in(0,T')$, for a.e.\ $t\in(0,T)$. Fix $\phi\in L^2(0,T)$, multiply by
$\phi(t)$ and integrate over $(0,T)$. The input term becomes
$\angl{\phi\alpha}{\bar u(s+\cdot)}_{L^2(0,T;U)}$, and the shift identity
$\bar x(s+t)=S(t)\bar x(s)+L_t\bar u(s+\cdot)$ of
Lemma~\ref{lem:finite-horizon-output} splits the state term into
$\angl{\xi_\phi}{\bar x(s)}_X+\Lambda_\phi(\bar u(s+\cdot))$, where
$\xi_\phi:=\int_0^T\phi(t)S(t)^\star\beta\,\d t$ (a Bochner integral, since
$t\mapsto S(t)^\star\beta$ is strongly continuous and bounded) and
$\Lambda_\phi(v):=\int_0^T\phi(t)\angl{\beta}{L_tv}_X\,\d t$. By
Assumption~\ref{hyp:H1} and $W\hookrightarrow X$ we have
$|\Lambda_\phi(v)|\le b_T'\|\beta\|_X\|\phi\|_{L^1(0,T)}\|v\|_{L^2(0,T;U)}$,
so $\Lambda_\phi$ is represented by some $g_\phi\in L^2(0,T;U)$. Hence
$(\phi\alpha+g_\phi,\xi_\phi)\in\ker(\mathcal Z_{T,T'}^{\bar u,\bar x})^\star$,
and window informativity forces $\phi\alpha+g_\phi=0$ and
$\xi_\phi=0$. The latter holds for every $\phi\in L^2(0,T)$, so the
continuous function $t\mapsto\angl{\beta}{S(t)^\star\beta}_X$ vanishes
on $(0,T)$. Letting $t\downarrow0$ gives $\|\beta\|_X^2=0$. Then
$\Lambda_\phi=0$, so $g_\phi=0$ and $\phi\alpha=0$; choosing $\phi\ne0$
gives $\alpha=0$.
\qed

\subsection{Verification of Example~\ref{ex:window-strictly-stronger} (informativity does not imply window informativity)}
\label{app:window-strictly-stronger}
Since $\|\bar u(t)\|_X^2=\sum_{n\ge1}4^{-n}\sin^2(nt)\le\tfrac13$, the input
\eqref{eq:window-strictly-stronger-input} is bounded and continuous. With
$A=0$ and $B=\operatorname{id}$ we have $S(t)=\operatorname{id}$,
$L_tu=\int_0^tu(r)\,\d r$ and $\|L_tu\|_X\le\sqrt t\,\|u\|_{L^2(0,t;U)}$, so
Assumption~\ref{hyp:H1} holds with $W=V=X$, and
$\RR_\tau(0)=\{\int_0^\tau u:u\in L^2\}=X$ for every $\tau>0$. From
$\bar x_0=0$,
\begin{equation}\label{eq:wss-state}
  \bar x(t)=\int_0^t\bar u(r)\,\d r
    =\sum_{n\ge1}\frac{2^{-n}}{n}\bigl(1-\cos(nt)\bigr)e_n .
\end{equation}

\emph{The record is informative in $U\times X$.} Fix $\tau>0$ and let
$(\alpha,\beta)\in U\times X$ annihilate the record on $(0,\tau)$; write
$\alpha_n:=\angl{\alpha}{e_n}_X$ and $\beta_n:=\angl{\beta}{e_n}_X$. By
\eqref{eq:window-strictly-stronger-input} and \eqref{eq:wss-state},
\[
  0=\sum_{n\ge1}2^{-n}
    \Bigl(\alpha_n\sin(nt)+\frac{\beta_n}{n}\bigl(1-\cos(nt)\bigr)\Bigr)
  \qquad\text{for a.e.\ }t\in(0,\tau) ,
\]
both series converging absolutely and uniformly because
$(\alpha_n),(\beta_n)\in\ell^2$. Writing the trigonometric functions as
exponentials and setting $q:=e^{it}$, the right-hand side becomes $g(q)$ for
the Laurent series
\[
  g(q):=c+\sum_{n\ge1}a_nq^n+\sum_{n\ge1}b_nq^{-n},
  \qquad
  c:=\sum_{n\ge1}\frac{2^{-n}\beta_n}{n},
\]
\[
  a_n:=2^{-n}\Bigl(\frac{\alpha_n}{2i}-\frac{\beta_n}{2n}\Bigr),
  \qquad
  b_n:=-2^{-n}\Bigl(\frac{\alpha_n}{2i}+\frac{\beta_n}{2n}\Bigr),
\]
which converges and is holomorphic on the annulus
$\{\tfrac12<|q|<2\}$. It vanishes on the arc
$\{e^{it}:t\in(0,\tau)\}$ up to a null set, hence on a subset of the annulus
with a limit point there; the identity theorem gives $g\equiv0$, and
uniqueness of Laurent coefficients gives $c=0$ and $a_n=b_n=0$ for every
$n$. Adding the last two yields $\beta_n=0$ and subtracting them yields
$\alpha_n=0$, so $(\alpha,\beta)=0$.

\emph{The record is window informative for no $T,T'>0$.} Fix
$T,T'>0$. Since $\dim L^2(0,T)=\infty$, there is
$\psi\in L^2(0,T)\setminus\{0\}$ with
$\int_0^T\psi(t)\sin t\,\d t=\int_0^T\psi(t)\cos t\,\d t=0$. Put
$g:=\psi e_1\in L^2(0,T;U)\setminus\{0\}$ and $\xi:=0$. Because
$\bar u(s+t)$ has $e_1$-component
$\tfrac12\bigl(\cos s\,\sin t+\sin s\,\cos t\bigr)$,
\[
  \int_0^T\angl{g(t)}{\bar u(s+t)}_U\,\d t
  =\frac{\cos s}{2}\int_0^T\!\!\psi(t)\sin t\,\d t
   +\frac{\sin s}{2}\int_0^T\!\!\psi(t)\cos t\,\d t
  =0
\]
for every $s$. Hence $(g,0)$ is a nonzero element of
$\ker(\mathcal Z_{T,T'}^{\bar u,\bar x})^\star$, so
$\overline{\im\mathcal Z_{T,T'}^{\bar u,\bar x}}\ne L^2(0,T;U)\times X$.
\qed

\subsection{Proof of Proposition~\ref{prop:hilbert-pe-necessary} (necessity of persistency of excitation)}
\label{app:pe-necessity}

In the discrete-time finite-dimensional setting, the corresponding necessity
is due to \textcite[Lem.~5]{shakouriNewPerspectiveWillems2025}; for
infinite-dimensional systems the argument is more involved.

\begin{proof}
  We argue by contraposition: assuming that $\bar u$ fails the stated
  excitation, we construct a pair $(A,B)$ in the admitted class and a solution
  $\bar x$ for which $(\bar u,\bar x)$ is not informative. The
  obstruction is finite-dimensional in both cases; the dimension of $X$ enters
  only through the modes that complete the construction, and it is only there
  that the case $N=\infty$ is felt.

  \emph{Step 1: the failing relation.} If $\bar u$ is not persistently
  exciting of the stated order, then there are an integer $r\ge0$ and
  $\zeta_0,\dots,\zeta_r\in U$ with $\zeta_r\neq0$ and a terminating relation
  \[
    \sum_{k=0}^{r}\angl{\zeta_k}{D^k\bar u}_U=0
    \quad\text{in }\DD'(I),
    \qquad r\le N
  \]
  (when $N=\infty$ this merely asserts that $r$ is finite). We encode it in the
  $U$-valued polynomial $\Psi$ and record its real zero set $Z$,
  \[
    \Psi(s):=\sum_{k=0}^{r}\zeta_k s^k,
    \qquad
    Z:=\{\lambda\in\R:\Psi(\lambda)=0\}.
  \]
  The scalar polynomial $s\mapsto\angl{\zeta_r}{\Psi(s)}_U$ has leading
  coefficient $\|\zeta_r\|_U^2\neq0$, hence does not vanish identically; since
  it vanishes at every point of $Z$, the set $Z$ is finite. The obstruction
  below lives on $r$ modes only; when $r=0$ every product and sum indexed by
  $1\le n\le r$ is empty, so that $p\equiv1$ and $\beta=0$.

  \emph{Step 2: the obstruction block.} Fix an orthonormal basis
  $(e_n)_{n=1}^{N}$ of $X$ and choose distinct reals
  $\lambda_1,\dots,\lambda_r\in(0,1)\setminus Z$, which is possible since $Z$ is
  finite and $r\le N$. Put
  \[
    p(s):=\prod_{n=1}^{r}(s-\lambda_n),
    \qquad
    \alpha:=\zeta_r,
    \qquad
    b_n:=\frac{\Psi(\lambda_n)}{p'(\lambda_n)}\neq0,
    \quad 1\le n\le r,
  \]
  where $b_n\ne0$ because $\lambda_n\notin Z$. The polynomial
  \[
    Q(z):=\alpha\,p(z)+\sum_{n=1}^{r}b_n\,\frac{p(z)}{z-\lambda_n}
  \]
  has degree at most $r$ and leading coefficient $\zeta_r$, the second sum
  contributing only in degrees below $r$, and $Q(\lambda_n)=b_n\,p'(\lambda_n)
  =\Psi(\lambda_n)$ for $1\le n\le r$. Hence $Q-\Psi$ has degree at most $r-1$
  and vanishes at the $r$ distinct points $\lambda_1,\dots,\lambda_r$, so
  $Q=\Psi$. Dividing by $p$, we record the identity
  \begin{equation}\label{eq:pe-necessary-transfer}
    p(z)\,\Xi(z)=\Psi(z),
    \qquad
    \Xi(z):=\alpha+\sum_{n=1}^{r}\frac{b_n}{z-\lambda_n}.
  \end{equation}

  \emph{Step 3: completion to $X$.} Take $W=\mathcal D(A)$ and $V=X$, set
  $\lambda_n:=-(n-r)$ for $r<n\le N$, and put
  \[
    A:=\operatorname{diag}(\lambda_n)_{n=1}^{N}.
  \]
  The $\lambda_n$ are distinct; the added ones are negative integers, whereas
  those from Step 2 lie in $(0,1)$. Then $A$ is self-adjoint with
  $\sup_n\lambda_n\le1$, so it generates an analytic semigroup. Being self-adjoint,
  $A^\star=A$ has the orthonormal eigenbasis $(e_n)_{n=1}^{N}$, so its system of
  root vectors is complete, and when $N=\infty$ it has compact resolvent, since
  $|\lambda_n|\to\infty$. Fix a unit vector $c\in U$, complete the coefficients of
  Step 2 by $b_n:=2^{-n}c$ for $n>r$, and define $B^\star e_n:=b_n$. All
  $b_n\neq0$ and $\sum_n\|b_n\|^2<\infty$, the tail being dominated by
  $\sum_n4^{-n}$. Hence these coordinates define a bounded operator
  $B^\star:X\to U$ (Hilbert--Schmidt when $N=\infty$); its adjoint
  $B:U\to X=V$ is bounded, and $B^\star$ is in particular graph-bounded.

  It remains to verify Assumption~\ref{hyp:H1}. Write
  $\Phi_Tu:=\int_0^TS(T-s)Bu(s)\,\d s$; its $n$-th eigencoordinate
  is $\int_0^Te^{\lambda_n(T-s)}\angl{b_n}{u(s)}_U\,\d s$. By
  Cauchy--Schwarz, this is bounded in modulus by
  \[
    \|b_n\|_U\,\|u\|_{L^2}
    \Bigl(\int_0^Te^{2\lambda_ns}\,\d s\Bigr)^{1/2},
    \qquad\text{where}\qquad
    \int_0^Te^{2\lambda_ns}\,\d s
    \le
    \begin{cases}
      Te^{2T}, & n\le r,\\[1mm]
      \bigl(2(n-r)\bigr)^{-1}, & n>r.
    \end{cases}
  \]
  Since $W=\mathcal D(A)$ and $2\in\rho(A)$, its graph norm is equivalent to
  $\|(2\operatorname{id}_X-A)\,\cdot\,\|_X$, and the two bounds above give
  \[
    \|(2\operatorname{id}_X-A)\Phi_Tu\|_{X}^2
    =\sum_{n=1}^{N}|2-\lambda_n|^2\,
     \Bigl|\int_0^Te^{\lambda_n(T-s)}\angl{b_n}{u(s)}_U\,\d s\Bigr|^2
    \le K\,\|u\|_{L^2}^2,
  \]
  with
  \[
    K:=4Te^{2T}\!\sum_{n\le r}\|b_n\|_U^2
      +\sum_{n>r}\frac{(2+n-r)^2}{2(n-r)}\,4^{-n}<\infty ,
  \]
  which verifies Assumption~\ref{hyp:H1}. Finally, each $\lambda_n$ is a simple eigenvalue with
  $B^\star e_n\neq0$, so
  $\ker B^\star\cap\ker(A-\lambda_n\operatorname{id}_X)=\{0\}$, and
  Proposition~\ref{prop:fattorini-hautus} yields approximate controllability.

  \emph{Step 4: the non-informative solution.} Set $\beta:=\sum_{n=1}^{r}e_n$;
  since $\alpha=\zeta_r\neq0$ we have $(\alpha,\beta)\neq0$. In terms of the
  operators just constructed, the function of
  \eqref{eq:pe-necessary-transfer} is
  $\Xi(z)=\alpha+B^\star(z\operatorname{id}_X-A)^{-1}\beta$.
  Let $x_n$ solve $\dot x_n=\lambda_n x_n+\angl{b_n}{\bar u}_U$ and set
  $\bar x:=\sum_{n=1}^{N}x_n e_n$. Then $\bar x$ is a solution of
  \eqref{eq:hilbert-control-formal} by Assumption~\ref{hyp:H1}. Since $\beta_n=0$ for
  $n>r$, only the first $r$ modes enter the combination
  \[
    w:=\angl{\alpha}{\bar u}_U+\angl{\beta}{\bar x}_X
      =\angl{\alpha}{\bar u}_U+\sum_{n=1}^{r}x_n.
  \]
  Applying $p(D)=\prod_{n\le r}(D-\lambda_n)$ and using
  $(D-\lambda_n)x_n=\angl{b_n}{\bar u}$,
  \[
    p(D)\,w=(p\Xi)(D)\,\bar u=\sum_{k=0}^{r}\angl{\zeta_k}{D^k\bar u}=0,
  \]
  so $w=\sum_{n=1}^{r}\kappa_n e^{\lambda_n t}$. Since the exponentials
  $e^{\lambda_n t}$ are linearly independent, a unique choice of the initial
  values $x_1(0),\dots,x_r(0)$, with $x_n(0)=0$ for $n>r$, yields
  $w\equiv0$; for
  $r=0$ this holds outright, the display above reading
  $w=\angl{\zeta_0}{\bar u}_U=0$. The nonzero pair $(\alpha,\beta)$ then
  satisfies $\angl{\alpha}{\bar u}_U+\angl{\beta}{\bar x}_X=0$ a.e.\ on $I$, so
  $(\alpha,\beta)$ is a nonzero element of $\ker\mathcal G(\bar u,\bar x)$ and
  $(\bar u,\bar x)$ is not informative.
\end{proof}

\subsection{Proof of Lemma~\ref{lem:window-pe-necessary} (excitation forced by window informativity)}
\label{app:window-pe-necessity}
We first show that the input windows are total in $L^2(0,T;U)$, that is,
\[
  \overline{\operatorname{span}
    \{\bar u(s+\cdot):0\le s\le T'\}}^{L^2(0,T;U)}
  =L^2(0,T;U).
\]
This follows by projecting
$\overline{\im\mathcal Z_{T,T'}^{\bar u,\bar x}}=L^2(0,T;U)\times X$ onto
its input component.  If excitation fails at order $L$, take
$\zeta_0,\ldots,\zeta_{L-1}$, not all zero, as in
Definition~\ref{def:PE}.  For $0\ne\psi\in\DD(0,T)$ set
$g=\sum_{k=0}^{L-1}(-1)^k\psi^{(k)}\zeta_k$.  Then $g\ne0$: otherwise
$\psi$ would solve a nontrivial constant-coefficient ODE, hence be analytic
and vanish identically, contrary to $\psi\ne0$.  Testing the
translated distributional relation against $\psi$ gives
\[
  \int_0^T\angl{\bar u(s+t)}{g(t)}_U\,\d t=0
  \qquad\text{for a.e.\ }s\in(0,T'),
\]
contradicting the totality of the input windows.
\qed

\subsection{Proof of Proposition~\ref{prop:finite-pe-sufficient} (finite-dimensional sufficiency)}
\label{app:finite-pe-sufficiency}
Let $(\alpha,\beta)\in U\times X$ annihilate the record:
\begin{equation}\label{eq:finite-suff-annihilator}
  \angl{\alpha}{\bar u}_U+\angl{\beta}{\bar x}_X=0
  \qquad\text{in }\DD'(I).
\end{equation}
For $k=0,\ldots,N$, put
$s_k:=\angl{(A^\star)^k\beta}{\bar x}_X$. The dynamics give
\[
  Ds_k=s_{k+1}+\angl{B^\star(A^\star)^k\beta}{\bar u}_U,
  \qquad k=0,\ldots,N-1.
\]
Starting from $s_0=-\angl{\alpha}{\bar u}_U$ and iterating this identity,
\begin{equation}\label{eq:finite-suff-recursion}
  s_k
  =-\sum_{j=0}^{k-1}
     \angl{B^\star(A^\star)^{k-1-j}\beta}{D^j\bar u}_U
   -\angl{\alpha}{D^k\bar u}_U ,
  \qquad k=0,\ldots,N .
\end{equation}
Let
$p(\lambda)=\lambda^N+a_{N-1}\lambda^{N-1}+\cdots+a_0$
be the characteristic polynomial of $A^\star$. Cayley--Hamilton gives
$s_N+\sum_{k=0}^{N-1}a_ks_k=0$. Substitution of
\eqref{eq:finite-suff-recursion} produces a relation
\[
  \sum_{j=0}^{N}\angl{\zeta_j}{D^j\bar u}_U=0
  \qquad\text{in }\DD'(I)
\]
for some $\zeta_0,\ldots,\zeta_N\in U$. Persistent excitation of
order $N+1$ makes every coefficient vanish. Reading the coefficients from
the highest derivative down gives successively
\[
  \alpha=0,\qquad
  B^\star\beta=B^\star A^\star\beta=\cdots
    =B^\star(A^\star)^{N-1}\beta=0 .
\]
Since $\dim X=N$, controllability of $(A,B)$ amounts to
\[
  \operatorname{span}\{A^kBv:\ v\in U,\ k=0,\ldots,N-1\}=X ,
\]
equivalently to $\bigcap_{k=0}^{N-1}\ker(B^\star(A^\star)^k)=\{0\}$,
so $\beta=0$. Thus the only annihilator in
\eqref{eq:finite-suff-annihilator} is the trivial one, and $(\bar u,\bar x)$
is informative.
\qed

\subsection{Proof of Proposition~\ref{prop:infinite-pe-insufficient} (infinite-order excitation counterexample)}
\label{app:infinite-pe-insufficient}
We use the notation of \eqref{eq:pe-counterexample}, writing
$Kx:=\angl{b}{x}_X$, so that $b\otimes b=BK$, and
\[
  \mathcal D(A_0):=\Big\{x\in X:\sum_{n\ge1}n^2
    |\angl{e_n}{x}_X|^2<\infty\Big\} .
\]
The operator $A_0$ is self-adjoint with
compact resolvent, while $BK$ is bounded and self-adjoint; hence the same is
true of $A$. Since $\|b\|_X^2=1/3$, for every $x\in\mathcal D(A)$,
\[
  \angl{x}{Ax}_X
  =-\sum_{n\ge1}n|\angl{e_n}{x}_X|^2+|\angl{b}{x}_X|^2
  \le-\tfrac23\|x\|_X^2 .
\]
Thus $A$ generates an analytic semigroup satisfying
$\|e^{tA}\|_{\mathcal L(X)}\le e^{-2t/3}$. Taking $W=V=X$, boundedness of
$B$ gives Assumption~\ref{hyp:H1} and makes $B^\star$ graph-bounded; moreover,
the root vectors of $A^\star=A$ are complete.

To verify approximate controllability, work on the complexified spaces and
suppose that $A^\star\eta=\lambda\eta$ and $B^\star\eta=0$. Then
$K\eta=0$ and $A_0\eta=A\eta-BK\eta=\lambda\eta$. If $\eta\ne0$, the simple
spectrum of $A_0$ gives $\eta=ce_n$ for some $c\ne0$ and $n\ge1$, contradicting
$0=B^\star\eta=c\,2^{-n}$. Proposition~\ref{prop:fattorini-hautus} now yields
approximate controllability of $(A,B)$.

Now, writing $z:=b$, define
\[
  \bar x(t):=e^{tA_0}z,\qquad
  \bar u(t):=-K\bar x(t)
    =-\sum_{n\ge1}4^{-n}e^{-nt}.
\]
Since $z\in\bigcap_{k\ge1}\mathcal D(A_0^k)$, both signals are smooth and
$\dot{\bar x}=A_0\bar x=A\bar x+B\bar u$, so $(\bar u,\bar x)$ is a
trajectory of $(A,B)$. Yet
$\bar u(t)+\angl{b}{\bar x(t)}_X=0$ on $I$, so the nonzero pair
$(1,b)\in U\times X$ annihilates the record. Thus the record is not
informative.

It remains to verify excitation. Fix $L\ge1$ and suppose that
$\sum_{j=0}^{L-1}\zeta_jD^j\bar u=0$ in $\DD'(I)$ for some
$\zeta_0,\ldots,\zeta_{L-1}\in\R$. The Weierstrass test and termwise
differentiation give, for every $j\ge0$ and uniformly on $I$,
\[
  D^j\bar u(t)=-\sum_{n\ge1}4^{-n}(-n)^j e^{-nt},
\]
since $\sum_{n\ge1}4^{-n}n^j<\infty$. Hence, for
$p(s):=\sum_{j=0}^{L-1}\zeta_js^j$, the distributional relation is equivalent
to the pointwise identity
\[
  \sum_{n\ge1}4^{-n}p(-n)e^{-nt}=0
  \qquad(t\in I).
\]
The function $F(q):=\sum_{n\ge1}4^{-n}p(-n)q^n$ is holomorphic on
$\{|q|<4\}$ and vanishes on $(e^{-T},1)$. Thus $F\equiv0$, so uniqueness of
power-series coefficients gives $p(-n)=0$ for every $n\ge1$. Consequently,
$p=0$ and $\zeta_0=\cdots=\zeta_{L-1}=0$. Since $L$ was arbitrary, $\bar u$
is persistently exciting of infinite order.
\qed

\subsection{Harmonically persistently exciting signals}
\label{app:harmonic-pe}

We first record the elementary properties of harmonic signals used in
Section~\ref{subsec:analytic-sufficient-input}. Let
$h(t)=\sum_{\ell\ge1}v_\ell e^{i\omega_\ell t}$ be a harmonic signal in the
sense of Definition~\ref{def:harmonic-signal}. The symmetry of the pairs
$(\omega_\ell,v_\ell)$ under $(\omega,v)\mapsto(-\omega,\overline v)$ makes $h$
real-valued, and absolute summability of the coefficients gives uniform
convergence of the series, so $h\in C_b(\R;U)$ and $h|_I\in L^2(I;U)$. The
coefficients are recovered, uniquely, by the Bohr mean
\[
  \hat h(\omega):=\lim_{S\to\infty}\frac1S\int_0^S e^{-i\omega t}h(t)\,\d t ,
\]
which equals $v_\ell$ at $\omega=\omega_\ell$ and vanishes at every other
frequency. Finally, since $|e^{i\omega_\ell t}|\le e^{\Omega|\operatorname{Im}t|}$
with $\Omega=\sup_\ell|\omega_\ell|<\infty$, the series converges locally
uniformly on $\CC$; thus $h$ is entire of exponential type at most $\Omega$,
and identities established on the finite window $I$ propagate to the whole
line.

\begin{proof}[Proof of Lemma~\ref{lem:harmonic-pe}]
  Since $2^{-j-k}\in(0,2^{-j})$, the frequency $\omega_{j,k}$ lies in the
  interval $I_j:=(2-2^{1-j},\,2-2^{-j})$, and the $I_j$ are pairwise disjoint
  subintervals of $(0,2)$; hence all the $\omega_{j,k}$ are pairwise distinct,
  and for each fixed $j$ the sequence $(\omega_{j,k})_{k\ge1}$ increases to
  $\nu_j:=2-2^{-j}$. Thus the first block starts at $\omega_{1,1}=\tfrac54$ and
  increases to $\nu_1=\tfrac32$, the second starts at
  $\omega_{2,1}=\tfrac{13}8$ and increases to $\nu_2=\tfrac74$, and so on. The
  complex exponential representation of $h(t)$ has
  frequencies $\pm\omega_{j,k}$, bounded by $\Omega\le2$, and coefficients
  $2^{-j-k-1}\phi_j$, which are absolutely summable because
  $\sum_{j\in J}\sum_{k\ge1}2^{-j-k}\le1$; it is therefore a harmonic signal, and it is
  harmonically persistently exciting with $a_{j,k}=2^{-j-k-1}$. If
  $\dim U=m<\infty$, the index set $J=\{1,\dots,m\}$ is finite and the
  construction is unchanged.
\end{proof}

\begin{proof}[Proof of Lemma~\ref{lem:harmonic-pe-implies-pe}]
  Suppose, for some $L\ge1$, that
  $\sum_{r=0}^{L-1}\angl{\zeta_r}{D^r\bar u}_U=0$ in $\DD'(I)$. The left-hand
  side extends to an entire scalar function. Since it vanishes distributionally
  on $I$, it vanishes pointwise there; the identity theorem then makes it
  identically zero on the whole real line.
  After division by $e^{\sigma t}$, its Bohr coefficient at the designated
  frequency $\omega_{j,k}$ is
  \[
    a_{j,k}\sum_{r=0}^{L-1}
      (\sigma+i\omega_{j,k})^r\angl{\zeta_r}{\phi_j}_U .
  \]
  Uniqueness of the Bohr coefficients makes this expression zero for every
  $k$. For fixed $j$ it is the value of a scalar polynomial of degree at most
  $L-1$ at the infinitely many distinct points $\sigma+i\omega_{j,k}$.
  The polynomial is therefore zero, so
  $\angl{\zeta_r}{\phi_j}_U=0$ for every $r$ and $j$. Totality of
  $(\phi_j)$ gives $\zeta_0=\cdots=\zeta_{L-1}=0$. Since $L$ was arbitrary,
  the excitation has infinite order.
\end{proof}

\subsection{Proof of Theorem~\ref{thm:analytic-universal-sufficiency} (universal sufficiency for analytic semigroups)}
\label{app:analytic-universal-sufficiency}

We work on the complexified spaces under the hypotheses of
Theorem~\ref{thm:analytic-universal-sufficiency}. Write $A_W$ for the generator
of $S|_W$ and pair $\eta\in W'$ with $W$ through
$\angl{\eta}{\cdot}_{W',W}$. Since $B$ may be unbounded in PDE applications,
the argument works in the frequency domain, through the \emph{transfer map}
\[
  \mathbf H(s):=(s\operatorname{id}-A_{X,-1})^{-1}B_X,
  \qquad s\in\rho(A),
\]
where $A_{X,-1}$ is the generator on $X_{-1}$, with
$\rho(A_{X,-1})=\rho(A)$. We first show that
Assumptions~\ref{hyp:H1} and~\ref{hyp:compat} make $\mathbf H$ holomorphic
with values in $\mathcal L(U,W)$ and give
$\|\mathbf H(r)\|_{\mathcal L(U,W)}=O(r^{-1/2})$ as $r\to+\infty$.

Since $\mathcal D(A)\hookrightarrow\mathcal D_V(A)\hookrightarrow W$,
$R(s,A):=(s\operatorname{id}-A)^{-1}\in\mathcal L(X,W)$ for $s\in\rho(A)$.
For $f\in W$, the vector $w=R(s,A)f$ satisfies $Aw=sw-f\in W$, so
$w\in\mathcal D(A_W)$. Thus $\rho(A)\subseteq\rho(A_W)$ and the resolvents
agree on $W$. By Appendix~\ref{app:control-realization}, $\mathbf H(r)$ is
$\mathcal L(U,W)$-valued for large real $r$, with the decay
\eqref{eq:control-resolvent-decay}. The identity
\[
  \mathbf H(s)=\mathbf H(r)-(s-r)R(s,A)\mathbf H(r)
\]
then makes $\mathbf H$ holomorphic into $\mathcal L(U,W)$ on all of $\rho(A)$.
Moreover, analyticity gives $S(1)\in\mathcal L(X,W)$; factoring
$S(t)|_W=S(1)S(t-1)|_W$ for $t\ge1$ and using local boundedness on $W$ gives
\begin{equation}\label{eq:analytic-W-growth-bound}
  \|S(t)\|_{\mathcal L(W)}\le M_W e^{\sigma t},\qquad t\ge0,
\end{equation}
for some $M_W\ge1$. Dualizing $S(\delta):X\to W$ gives
$S(\delta)^\star:W'\to X$. A further positive time maps $X$ into
$\mathcal D(A^\star)$ analytically. Factoring locally at a fixed positive time
shows that $r\mapsto S(r)^\star\eta$ is analytic into $\mathcal D(A^\star)$
on $(0,\infty)$ for every $\eta\in W'$.

Write $h(t)=\sum_{\ell\ge1}v_\ell e^{i\omega_\ell t}$ and set
\[
  s_\ell:=\sigma+i\omega_\ell,\qquad
  q_\ell:=\mathbf H(s_\ell)v_\ell,\qquad
  K:=\sigma+i\,\overline{\{\omega_\ell:\ell\ge1\}}.
\]
The compact set $K$ lies in the connected component $\Omega_\star$ of
$\rho(A)$ containing $\{\operatorname{Re}s>\sigma\}$: every point of $K$ has
a disc in $\rho(A)$ meeting that half-plane. Holomorphy gives
$C_K:=\sup_{s\in K}\|\mathbf H(s)\|_{\mathcal L(U,W)}<\infty$, hence
\begin{equation}\label{eq:analytic-qsummable}
  \sum_{\ell\ge1}\|q_\ell\|_W
  \le C_K\sum_{\ell\ge1}\|v_\ell\|_{U_{\CC}}<\infty.
\end{equation}
The finite-time Laplace identity on $X_{-1}$, applied to $B_Xv$, gives
\begin{equation}\label{eq:analytic-exp-convolution}
  L_t\bigl(e^{s\,\cdot}v\bigr)
  =e^{st}\mathbf H(s)v-S(t)\mathbf H(s)v,
  \qquad s\in\rho(A),\quad v\in U_{\CC},
\end{equation}
with both sides in $W$. The partial sums of $h$ converge in $L^2$ on bounded
intervals, so Assumption~\ref{hyp:H1} makes their control convolutions converge in
$C([0,\tau];W)$ for every $\tau>0$. Summing
\eqref{eq:analytic-exp-convolution} yields
\begin{equation}\label{eq:analytic-state-expansion}
  \bar x(t)=S(t)c+\sum_{\ell\ge1}q_\ell e^{s_\ell t},
  \qquad c:=x_0-\sum_{\ell\ge1}q_\ell,
\end{equation}
as an identity in $X$ for $x_0\in X$, and in $W$ for $x_0\in W$.

Both implications proved below start from an annihilator of the record, expand
it through \eqref{eq:analytic-state-expansion}, and eliminate the free response
by averaging. We isolate that step, which is the only one sensitive to whether
the state is read in $W$ or in $X$.

\begin{lem}[Vanishing of the harmonic coefficients]\label{lem:bohr-annihilator}
  Let $E$ be $W$ or $X$, with generator $A_E$ and growth bound
  \eqref{eq:analytic-W-growth-bound} or \eqref{eq:analytic-growth-bound}
  respectively. Let $c\in E_{\CC}$, let $\zeta\in E_{\CC}'$, and let
  $(d_\ell)_{\ell\ge1}$ be absolutely summable scalars such that
  \[
    \angl{\zeta}{S(t)c}_{E',E}
      +e^{\sigma t}\sum_{\ell\ge1}d_\ell e^{i\omega_\ell t}=0
  \]
  for a.e.\ $t$ in some nonempty open subinterval of $(0,\infty)$. Then
  $d_\ell=0$ for every $\ell$.
\end{lem}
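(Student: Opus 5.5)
The plan is to propagate the identity from the subinterval to the whole half-line by analyticity, divide by $e^{\sigma t}$, and take Bohr means. The free-response term will have zero Bohr mean at every frequency $\omega_\ell$, because $\sigma+i\omega_\ell\in\rho(A)$. The harmonic term has Bohr mean exactly $d_\ell$, so $d_\ell=0$.

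\emph{Step 1: extension to $(0,\infty)$.} Put $f(t):=\angl{\zeta}{S(t)c}_{E',E}$ and $g(t):=\sum_{\ell}d_\ell e^{i\omega_\ell t}$.
\begin{itemize}
\item For $E=X$, the semigroup is analytic, so $t\mapsto S(t)c$ is real-analytic into $X_{\CC}$ on $(0,\infty)$.
\item For $E=W$, the same holds because the restriction $S|_W$ factors through $S(\delta)\in\mathcal L(X,W)$, as shown just before the lemma.
\item Pairing with the fixed functional $\zeta$ therefore makes $f$ real-analytic on $(0,\infty)$.
\item By absolute summability and boundedness of the frequencies, $g$ is entire (Appendix~\ref{app:harmonic-pe}).
\end{itemize}
Hence $f+e^{\sigma\cdot}g$ is real-analytic on $(0,\infty)$. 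It vanishes a.e., hence everywhere, on an open subinterval, so the identity theorem gives $f(t)=-e^{\sigma t}g(t)$ for every $t>0$.

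\emph{Step 2: the free response has vanishing Bohr coefficients.} Fix $\ell$ and set $s:=s_\ell=\sigma+i\omega_\ell\in\rho(A)$. Recall $\rho(A)\subseteq\rho(A_W)$ with consistent resolvents, as established above, so $R(s,A_E)\in\mathcal L(E_{\CC})$ in both cases. Differentiating $t\mapsto e^{-st}S(t)R(s,A_E)c$ gives
\[
  \int_0^{\tau}e^{-st}S(t)c\,\d t=R(s,A_E)c-e^{-s\tau}S(\tau)R(s,A_E)c .
\]
This holds first for $c\in\mathcal D(A_E)$ and then by density for $c\in E_{\CC}$. By the growth bound \eqref{eq:analytic-growth-bound} or \eqref{eq:analytic-W-growth-bound}, $\|e^{-s\tau}S(\tau)\|_{\mathcal L(E)}\le M_E$, so the right-hand side is bounded uniformly in $\tau$. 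Pairing with $\zeta$ and dividing by $\tau$ yields
\[
  \lim_{\tau\to\infty}\frac1\tau\int_0^\tau e^{-i\omega_\ell t}\,e^{-\sigma t}f(t)\,\d t=0 .
\]

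\emph{Step 3: conclusion.} By Step 1, $e^{-\sigma t}f(t)=-g(t)$ for all $t>0$. Since $g$ converges uniformly, its Bohr mean at $\omega_\ell$ equals $d_\ell$: the frequencies are pairwise distinct, and each off-diagonal term contributes $O(1/\tau)$ with dominated convergence in $\ell$. Comparing with Step 2 gives $d_\ell=0$ for every $\ell$.

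The main obstacle is Step 1 in the case $E=W$. We need real-analyticity of $t\mapsto S(t)c$ in $W$, rather than merely in $X$, for $c\in W$. I would handle this by writing $S(t)c=S(\delta)S(t-\delta)c$ for $t>\delta$. Analyticity of $t\mapsto S(t-\delta)c$ into $X$, composed with the bounded map $S(\delta)\in\mathcal L(X,W)$, gives analyticity into $W$ on $(\delta,\infty)$ for every $\delta>0$. A secondary point is to check that the resolvent identity in Step 2 is used with $A_W$ on $W$, which the consistency of resolvents already covers.
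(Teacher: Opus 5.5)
Your proposal is correct and follows essentially the same route as the paper. You continue the identity to all $t>0$ by analyticity, then take the Bohr mean at $\omega_\ell$: absolute summability returns $d_\ell$ from the harmonic part, and the free response averages away because $\int_0^\tau e^{-s_\ell t}S(t)c\,\d t=R(s_\ell,A_E)c-e^{-s_\ell\tau}S(\tau)R(s_\ell,A_E)c$ stays bounded by the growth bound. Your density argument in Step 2 is superfluous, since $R(s_\ell,A_E)c\in\mathcal D(A_E)$ for every $c\in E_{\CC}$, so the derivative computation applies directly.
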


\begin{proof}
  The harmonic term extends to an entire function, and $S(t)c$ is analytic into
  $\mathcal D(A)\hookrightarrow E$ for $t>0$, so the identity holds for every
  $t>0$. Multiply by $e^{-\sigma t}$ and take the Bohr mean at $\omega_\ell$:
  absolute summability permits termwise averaging, which returns $d_\ell$,
  while the free response averages away, since for
  $d:=(A_E-s_\ell\operatorname{id})^{-1}c$ the growth bound gives
  \[
    \frac1R\int_0^R\! e^{-s_\ell t}\angl{\zeta}{S(t)c}_{E',E}\,\d t
      =\frac{\angl{\zeta}{e^{-s_\ell R}S(R)d-d}_{E',E}}{R}
      \xrightarrow{R\to\infty}0 . \qedhere
  \]
\end{proof}

\begin{lem}[Transfer map as a Laplace transform]\label{lem:transfer-laplace}
  For $\eta\in W_{\CC}'$, the function
  $\psi_\eta(r):=B^\star S(r)^\star\eta$ is analytic on $(0,\infty)$ and
  belongs to $L^2_{\rm loc}([0,\infty);U_{\CC})$. Moreover,
  \begin{equation}\label{eq:analytic-transfer-laplace}
    \angl{\eta}{\mathbf H(s)v}_{W',W}
    =\int_0^\infty e^{-sr}\angl{\psi_\eta(r)}{v}_U\,\d r,
    \qquad \operatorname{Re}s>\sigma,\quad v\in U_{\CC}.
  \end{equation}
  If the left-hand side vanishes for every such $s$ and $v$, then
  $\psi_\eta\equiv0$ and $\eta$ annihilates $\im L_\tau$ for every $\tau>0$.
  If $(A,B)$ is approximately controllable in time $T$, then
  $\overline{\im L_T}^{\,W}=W$.
\end{lem}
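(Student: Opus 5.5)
We prove the four assertions in order, reusing the facts established just before the statement: $r\mapsto S(r)^\star\eta$ is analytic into $\mathcal D(A^\star)$ on $(0,\infty)$, the growth bound \eqref{eq:analytic-W-growth-bound} holds on $W$, and the exponential convolution identity \eqref{eq:analytic-exp-convolution} holds.

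\emph{Analyticity and local integrability.} Analyticity of $\psi_\eta$ on $(0,\infty)$ follows at once. The map $r\mapsto S(r)^\star\eta$ is analytic into $\mathcal D(A^\star)$, and $B^\star$ is bounded on $\mathcal D(A^\star)$ with the graph norm (Appendix~\ref{app:control-realization}).

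For local square integrability, I will identify $\psi_\eta$ with a Riesz representative. By Assumption~\ref{hyp:H1} and Lemma~\ref{lem:finite-horizon-output}, the functional $u\mapsto\angl{\eta}{L_\tau u}_{W',W}$ is bounded on $L^2(0,\tau;U)$. It is therefore represented by some $g_\tau\in L^2(0,\tau;U_{\CC})$.

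Now take $u$ supported in $(0,\tau-\varepsilon)$. Then $L_\tau u=S(\varepsilon)L_{\tau-\varepsilon}u$. Moreover $S(\tau-r)^\star\eta$ lies in $\mathcal D(A^\star)$ uniformly for $r\le\tau-\varepsilon$. So the pairing can be computed through the $X_{-1}$/$\mathcal D(A^\star)$ duality, which gives
\[
  \angl{\eta}{L_\tau u}_{W',W}=\int_0^\tau\angl{\psi_\eta(\tau-r)}{u(r)}_U\,\d r .
\]
Consequently $g_\tau=\psi_\eta(\tau-\cdot)$ a.e.\ on $(0,\tau-\varepsilon)$, for every $\varepsilon>0$, and hence $\psi_\eta\in L^2(0,\tau)$.

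The same splitting gives an exponential bound on unit intervals. Write $\angl{\eta}{S(k)L_1u}=\angl{S(k)^\star\eta}{L_1u}$ and apply \eqref{eq:analytic-W-growth-bound}. This yields
\[
  \|\psi_\eta\|_{L^2(k,k+1)}\le b\,M_W e^{\sigma k}\|\eta\|_{W'},
\]
so the Laplace integral in \eqref{eq:analytic-transfer-laplace} converges absolutely for $\operatorname{Re}s>\sigma$. I expect this step to be the main technical obstacle. The point is to justify the representation formula for $\eta\in W'$, which is not in $X$, and to check that all the dualities involved are consistent.

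\emph{The Laplace formula.} Rearranging \eqref{eq:analytic-exp-convolution} gives
\[
  \mathbf H(s)v=e^{-s\tau}L_\tau(e^{s\cdot}v)+e^{-s\tau}S(\tau)\mathbf H(s)v .
\]
For $\operatorname{Re}s>\sigma$, the second term tends to $0$ in $W$ as $\tau\to\infty$ by \eqref{eq:analytic-W-growth-bound}. Pairing the first term with $\eta$ and using the representation above gives
\[
  \angl{\eta}{e^{-s\tau}L_\tau(e^{s\cdot}v)}_{W',W}=\int_0^\tau e^{-s\rho}\angl{\psi_\eta(\rho)}{v}_U\,\d\rho .
\]
Letting $\tau\to\infty$ yields \eqref{eq:analytic-transfer-laplace}.

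\emph{Vanishing.} Suppose the left-hand side of \eqref{eq:analytic-transfer-laplace} vanishes for all admissible $s$ and $v$. Injectivity of the Laplace transform on exponentially bounded $L^2_{\rm loc}$ functions gives $\angl{\psi_\eta}{v}_U=0$ a.e.\ for every $v$. Letting $v$ range over a countable dense set shows $\psi_\eta=0$ a.e., and analyticity upgrades this to $\psi_\eta\equiv0$. The representation formula then gives $\angl{\eta}{L_\tau u}=0$ for every $u$ and every $\tau>0$.

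\emph{Density in $W$.} By Hahn--Banach it suffices to show that any $\eta\in W'$ annihilating $\im L_T$ is zero. Such an $\eta$ gives $g_T=0$, so $\psi_\eta=0$ on $(0,T)$, and by analyticity $\psi_\eta\equiv0$ on $(0,\infty)$.

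Fix $\delta>0$ and set $\xi:=S(\delta)^\star\eta\in X$. Then $B^\star S(r)^\star\xi=\psi_\eta(r+\delta)=0$, so $\xi$ annihilates $\im L_T$ in $X$. Approximate controllability then forces $\xi=0$. Thus $\angl{\eta}{S(\delta)w}_{W',W}=0$ for every $w\in W$. Strong continuity of $S$ on $W$, letting $\delta\downarrow0$, gives $\eta=0$.
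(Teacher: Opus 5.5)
Your proposal is correct and follows the paper's proof essentially step for step: analyticity from the smoothing of $S(\cdot)^\star$ into $\mathcal D(A^\star)$; local square integrability by duality for controls supported away from the terminal time, combined with Assumption~\ref{hyp:H1}; the transform identity obtained by pairing \eqref{eq:analytic-exp-convolution} with $\eta$; and density via $S(\delta)^\star\eta\in X$ together with approximate controllability. The only deviations are cosmetic: you use an $L^2$ bound on unit intervals instead of the paper's pointwise bound $\|\psi_\eta(r)\|_U=O(e^{\sigma r})$, and you finish with strong continuity of $S$ on $W$ (a weak limit) rather than of the adjoint semigroup on $W'$, which is if anything slightly more economical.
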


\begin{proof}
  Analyticity follows from the smoothing just established and boundedness of
  $B^\star$ for the graph norm. Duality, first for controls supported away
  from the terminal time and then by Assumption~\ref{hyp:H1}, gives
  $(L_t^\star\eta)(\tau)=\psi_\eta(t-\tau)$ and local square integrability.
  Smoothing over a fixed positive time and \eqref{eq:analytic-W-growth-bound}
  also give $\|\psi_\eta(r)\|_U=O(e^{\sigma r})$ for $r\ge1$.
  Pairing \eqref{eq:analytic-exp-convolution} with $\eta$ yields
  \[
    \int_0^t e^{-sr}\angl{\psi_\eta(r)}{v}_U\,\d r
    =\angl{\eta}{\mathbf H(s)v}_{W',W}
       -e^{-st}\angl{\eta}{S(t)\mathbf H(s)v}_{W',W}.
  \]
  The last term tends to zero for $\operatorname{Re}s>\sigma$ by
  \eqref{eq:analytic-W-growth-bound}, proving the transform identity.
  Uniqueness of the Laplace transform and separability of $U$ show that a
  vanishing transform forces $\psi_\eta=0$, hence
  $\angl{\eta}{L_\tau u}_{W',W}=0$ for every $u$ and $\tau$.

  For the density assertion, let $\eta\in W'$ annihilate $\im L_T$.
  Then $\psi_\eta=0$ on $(0,T)$ and hence on $(0,\infty)$ by analyticity.
  For every $r>0$, the vector $\eta_r:=S(r)^\star\eta\in X$ satisfies
  $B^\star S(\rho)^\star\eta_r=\psi_\eta(r+\rho)=0$ for $\rho>0$.
  Thus $\eta_r\perp\im L_T$ in $X$, and approximate controllability gives
  $\eta_r=0$. Strong continuity of the adjoint semigroup on the Hilbert
  space $W'$ gives $\eta=\lim_{r\downarrow0}\eta_r=0$.
\end{proof}

\begin{proof}[Proof of Theorem~\ref{thm:analytic-universal-sufficiency}]
  \emph{\ref{item:analytic-approx-control}
  $\Rightarrow$\ref{item:analytic-all-informative}.}
  Fix $x_0\in W$ and let $(\alpha,\eta)\in U_{\CC}\times W_{\CC}'$
  annihilate the record on $I$. By \eqref{eq:analytic-state-expansion},
  \begin{equation}\label{eq:analytic-annihilator-expansion}
    0=\angl{\eta}{S(t)c}_{W',W}
      +e^{\sigma t}\sum_{\ell\ge1}d_\ell e^{i\omega_\ell t},
    \qquad
    d_\ell:=\angl{\alpha}{v_\ell}_U+\angl{\eta}{q_\ell}_{W',W},
  \end{equation}
  for a.e.\ $t\in I$, the coefficients being absolutely summable by
  \eqref{eq:analytic-qsummable}. Lemma~\ref{lem:bohr-annihilator} on $W$
  therefore gives $d_\ell=0$ for every $\ell$.

  Fix a direction $\phi_j$ of Definition~\ref{def:harmonic-pe}. The function
  \[
    f_j(s):=\angl{\alpha}{\phi_j}_U
             +\angl{\eta}{\mathbf H(s)\phi_j}_{W',W}
  \]
  is holomorphic on $\Omega_\star$ and vanishes at
  $s_{j,k}:=\sigma+i\omega_{j,k}$, because the corresponding coefficient is
  $a_{j,k}\phi_j$ with $a_{j,k}\ne0$. Since
  $s_{j,k}\to\sigma+i\nu_j\in K\subset\Omega_\star$, the identity theorem
  gives $f_j\equiv0$. Letting $s=r\to+\infty$ and using
  \eqref{eq:control-resolvent-decay} gives $\angl{\alpha}{\phi_j}_U=0$.
  Totality of $(\phi_j)$ implies $\alpha=0$ and then
  $\angl{\eta}{\mathbf H(s)v}_{W',W}=0$ for all $s\in\Omega_\star$ and
  $v\in U_{\CC}$. Lemma~\ref{lem:transfer-laplace} and approximate
  controllability give $\eta=0$.

  \emph{\ref{item:analytic-approx-control}
  $\Rightarrow$\ref{item:analytic-window-all-state}.}
  Fix $x_0\in X$ and let $(g,\xi)\in\ker(\mathcal Z_{T,T'}^{\bar u,\bar x})^\star$.
  Then
  \[
    \int_0^T\angl{g(t)}{\bar u(s+t)}_U\,\d t
       +\angl{\xi}{\bar x(s)}_X=0
    \qquad\text{for a.e.\ }s\in(0,T').
  \]
  Substituting \eqref{eq:analytic-state-expansion} and integrating termwise
  gives
  \[
    0=\angl{\xi}{S(s)c}_X+e^{\sigma s}\sum_{\ell\ge1}D_\ell e^{i\omega_\ell s},
    \qquad
    D_\ell:=\int_0^T e^{s_\ell t}\angl{g(t)}{v_\ell}_U\,\d t
               +\angl{\xi}{q_\ell}_X.
  \]
  The coefficients are absolutely summable, since the integral is bounded by
  $\sqrt T e^{|\sigma|T}\|g\|_{L^2}\|v_\ell\|_U$, so
  Lemma~\ref{lem:bohr-annihilator}, now on $X$, gives $D_\ell=0$ for every
  $\ell$. Consequently,
  \[
    F_j(z):=\int_0^T e^{zt}\angl{g(t)}{\phi_j}_U\,\d t
                +\angl{\xi}{\mathbf H(z)\phi_j}_X
  \]
  vanishes on $\Omega_\star$ by the same identity theorem. For
  $\operatorname{Re}z>\sigma$, Lemma~\ref{lem:transfer-laplace}, applied to
  $\xi\in X\hookrightarrow W'$, identifies $F_j(z)$ with
  $\int_{-T}^\infty e^{-zr}p_j(r)\,\d r$, where
  \[
    p_j(r):=
    \begin{cases}
      \angl{g(-r)}{\phi_j}_U,&-T<r<0,\\
      \angl{\psi_\xi(r)}{\phi_j}_U,&r\ge0.
    \end{cases}
  \]
  Translating by $T$ and using uniqueness of the Laplace transform gives
  $p_j=0$. Totality of $(\phi_j)$ gives $g=0$ and $\psi_\xi=0$, and
  approximate controllability then gives $\xi=0$.

  \emph{\ref{item:analytic-all-informative} or
  \ref{item:analytic-window-all-state} with $x_0=0$
  $\Rightarrow$\ref{item:analytic-approx-control}.}
  If $\overline{\im L_T}^{\,X}\ne X$, choose
  $0\ne\beta\in(\im L_T)^\perp$. Then $\psi_\beta=0$ on $(0,T)$ and
  hence on $(0,\infty)$ by analyticity. For the zero-state trajectory,
  \[
    \angl{\beta}{\bar x(s)}_X
    =\int_0^s\angl{\psi_\beta(s-r)}{\bar u(r)}_U\,\d r=0,
    \qquad s\ge0.
  \]
  Since $W$ is dense in $X$, $\beta$ also defines a nonzero functional on
  $W$. Thus $(0,\beta)$ contradicts either informativity in $U\times W$ or
  window informativity, proving both converse implications.
\end{proof}

\subsection{Proof of Example~\ref{ex:boundary-heat-informative} (boundary-controlled heat equation)}
\label{app:boundary-heat-informative}
\emph{The generator and the localized spaces.} Let
\[
  A=\partial_{\xi\xi},\qquad
  \mathcal D(A)=\{\varphi\in H^2(0,1):\varphi'(0)=\varphi'(1)=0\} ,
\]
which is self-adjoint with eigenvalues and normalized eigenfunctions
\[
  \lambda_0=0,\quad e_0(\xi)=1,\qquad
  \lambda_n=-n^2\pi^2,\quad e_n(\xi)=\sqrt2\cos(n\pi\xi),\qquad n\ge1 .
\]
It generates a bounded analytic semigroup with
$\|S(t)\|_{\mathcal L(X)}=1$, so \eqref{eq:analytic-growth-bound} holds with
$M=1$ and $\sigma=0$. Choose $\chi\in C_c^\infty(0,1)$ equal to one near
$\xi_0$ and define
\begin{equation}\label{eq:heat-localized-V}
  \|z\|_V^2:=\|z\|_{X_{-1}}^2+
    \int_0^\infty e^{-2t}\|\chi S(t)z\|_{H^1(0,1)}^2\,\d t,
\end{equation}
let $V$ be the space of those $z\in X_{-1}$ for which
\eqref{eq:heat-localized-V} is finite, and take $W=\mathcal D_V(A)$ with its
graph norm. Write $E=X_{-1}$, with norm
$\|z\|_E=\|(\operatorname{id}-A)^{-1}z\|_X$, use the same symbol $S$ for the
extended semigroup, and set $F_z(t):=e^{-t}\chi S(t)z$, so that
$\|z\|_V^2=\|z\|_E^2+\|F_z\|_{L^2(0,\infty;H^1)}^2$. For each $t>0$ the
operator $\chi S(t):E\to H^1(0,1)$ is bounded by analytic smoothing, so
$z\mapsto F_z$ is closed and $V$ is a Hilbert space. Multiplication by
$\chi$ is bounded on $H^1$, and the eigenfunction expansion gives
\[
  \int_0^\infty e^{-2t}\|S(t)z\|_{H^1(0,1)}^2\,\d t
    =\tfrac12\|z\|_X^2,\qquad z\in X,
\]
whence $X\hookrightarrow V$. From $F_{S(s)z}(t)=e^sF_z(t+s)$, strong
continuity of right translations in $L^2$ and of $S$ on $E$ make $S$
strongly continuous on $V$, with $\|S(s)\|_{\mathcal L(V)}\le e^s$; since
$S(s)z\in X$ for $s>0$, the same identity gives density. In particular,
$X\hookrightarrow V\hookrightarrow E$ continuously and densely.

Write $A_V$ for the generator on $V$, so that $W=\mathcal D(A_V)$ with
$\|w\|_W^2=\|w\|_V^2+\|A_Vw\|_V^2$ and Assumption~\ref{hyp:compat} holds by
construction, with $Z=W$. The consistent resolvent
$(2\operatorname{id}-A_V)^{-1}$ is the restriction of
$(2\operatorname{id}-A_{X,-1})^{-1}:E\to X$, so $W\hookrightarrow X$ and
\begin{equation}\label{eq:heat-W-domain}
  W=\{w\in X:A_{X,-1}w\in V\},\qquad A_Vw=A_{X,-1}w.
\end{equation}
Also $\mathcal D(A)\subset W$ densely in $X$, and $S$ restricts to a
strongly continuous semigroup on $W$.

\emph{Point observation and Assumption~\ref{hyp:H2}.}
For $w\in W$ the orbit satisfies $F_w\in H^1(0,\infty;H^1(0,1))$ with
$F_w'=F_{A_Vw}-F_w$, so its time trace $F_w(0)$, which equals $\chi w$
because $S(t)w\to w$ in $X$, obeys $\|\chi w\|_{H^1}\le c_\chi\|w\|_W$.
Combined with the embedding $H^1(0,1)\hookrightarrow C[0,1]$, of norm $c$,
this gives both $|Cw|=|w(\xi_0)|\le c\,c_\chi\|w\|_W$ and
\[
  \int_0^T|CS(t)w|^2\,\d t
    \le c^2 e^{2T}\int_0^\infty e^{-2t}
           \|\chi S(t)w\|_{H^1}^2\,\d t
    \le c^2e^{2T}\|w\|_V^2,
\]
which is Assumption~\ref{hyp:H2}. Point evaluation remains unbounded on
$X$, as smooth bumps near $\xi_0$, which lie in $\mathcal D(A)\subset W$,
show.

\emph{Boundary control and Assumption~\ref{hyp:H1}.}
Integration by parts gives $Bu=bu$, where $b=-\delta_0\in E$ has
coefficients $b_0=-1$ and $b_n=-\sqrt2$ for $n\ge1$. In particular,
$b\notin X$, so the control is genuinely unbounded on $X$. For $t>0$,
\[
  S(t)b=-K_N(t,\cdot,0),\qquad
  K_N(t,\xi,0)=\frac1{\sqrt{\pi t}}
    \sum_{m\in\mathbb Z}\exp\!\left(-\frac{(\xi-2m)^2}{4t}\right).
\]
Set $q(t):=\chi S(t)b$ and
$d_\chi:=\operatorname{dist}(\operatorname{supp}\chi,\{0\})>0$. Every
$\xi\in\operatorname{supp}\chi$ satisfies $|\xi-2m|\ge d_\chi$ for all
$m\in\mathbb Z$, so differentiating the image series gives, for suitable
constants $a,N$,
\[
  \|q(t)\|_{H^1}+\|q'(t)\|_{H^1}
    \le a\,t^{-N}e^{-d_\chi^2/(8t)},\qquad 0<t\le1,
\]
while for $t\ge1$ the eigenfunction expansion bounds $\|q(t)\|_{H^1}$ and,
the mode $\lambda_0=0$ being annihilated by the time derivative, makes
$\|q'(t)\|_{H^1}$ decay exponentially. Hence
\begin{equation}\label{eq:heat-separated-kernel}
  \int_0^\infty e^{-2t}
     \bigl(\|q(t)\|_{H^1}^2+\|q'(t)\|_{H^1}^2\bigr)\,\d t<\infty,
\end{equation}
and the definition of $V$ gives $b\in V$, hence $B\in\mathcal L(\R,V)$.

To prove the stronger control estimate into $W$, fix $u\in L^2(0,T)$
and put $w=L_Tu$. Cauchy--Schwarz in each Fourier coefficient gives
\begin{equation}\label{eq:heat-control-X}
  \|w\|_X^2\le\left(T+\sum_{n\ge1}\frac1{n^2\pi^2}\right)
    \|u\|_{L^2(0,T)}^2,
\end{equation}
so $a:=A_{X,-1}w\in E$ with $\|a\|_E\le c\|w\|_X$. For $s>0$, smoothing and
the convolution formula give $\chi S(s)a=\int_0^Tq'(s+T-r)u(r)\,\d r$, and
Cauchy--Schwarz in $r$ followed by the substitution $t=s+T-r$ yields
\[
  \int_0^\infty e^{-2s}\|\chi S(s)a\|_{H^1}^2\,\d s
    \le T e^{2T}\|u\|_{L^2(0,T)}^2
       \int_0^\infty e^{-2t}\|q'(t)\|_{H^1}^2\,\d t.
\]
With \eqref{eq:heat-separated-kernel} and \eqref{eq:heat-control-X} this
makes $a\in V$ with $\|a\|_V\le c_T\|u\|_{L^2(0,T)}$, so
\eqref{eq:heat-W-domain} gives
\[
  L_Tu\in W,\qquad \|L_Tu\|_W\le b_T\|u\|_{L^2(0,T)},
\]
proving Assumption~\ref{hyp:H1} for every $T>0$.

\emph{Excitation and informativity.}
The resolvent set is
$\rho(A)=\CC\setminus\{-n^2\pi^2:n\ge0\}$.
The frequencies $\pm\omega_k$ are distinct and bounded, lie in
$\pm(1,\tfrac32]$ and converge to $\pm1$, and the coefficients $2^{-k-1}$ of
the complex exponential representation are nonzero and absolutely summable.
Thus $\bar u$ is a harmonic signal, and it is harmonically persistently
exciting with the total family $\{1\}$ in $U_{\CC}=\CC$. Since
$\sigma=0$, the closure of the frequency set gives
$K=i\bigl(\{\pm\omega_k\}\cup\{\pm1\}\bigr)$, which consists of nonzero purely
imaginary numbers and therefore lies in $\rho(A)$, the spectrum being real.
Finally, $A$ has compact resolvent and a complete orthonormal eigenbasis,
and $B^\star\varphi=-\varphi(0)$ is bounded for the graph norm of
$\mathcal D(A)$. Each $\lambda_n$ is simple with $B^\star e_n=b_n\neq0$, so
Proposition~\ref{prop:fattorini-hautus} gives approximate controllability of
$(A,B)$, and Theorem~\ref{thm:analytic-universal-sufficiency} yields both
informativity in $U\times W$ for $\bar x_0\in W$ and window informativity
for $\bar x_0\in X$.
\qed

\subsection{Proof of Theorem~\ref{thm:willems-gramian} (Willems' fundamental lemma)}
\label{app:willems-gramian}
\emph{Step 1: regularity of the record.} By Assumption~\ref{hyp:H1} and the
discussion following it, $\bar x\in C(I;W)$ and
$\bar y=C\bar x\in C(I;Y)$. Moreover,
$B\bar u\in L^2(I;V_{-1})\subset L^1(I;V_{-1})$. Viewing the mild formula
\eqref{eq:hilbert-mild-state} on $V_{-1}$, Ball's variation-of-constants
theorem \parencite{ballStronglyContinuousSemigroups1977} shows that
$\bar x$ is a weak solution of $\dot x=Ax+B\bar u$ on $V_{-1}$. Let
$\phi\in\DD(I)$ and $z\in\mathcal D(A_{-1}^\star)$. Since
$\bar x(t)\in W\subset V=\mathcal D(A_{-1})$, the weak formulation gives
\[
  \Big\langle
    -\int_0^T\dot\phi(t)\bar x(t)\,\d t
    -\int_0^T\phi(t)\bigl(A\bar x(t)+B\bar u(t)\bigr)\,\d t
  ,z\Big\rangle_{V_{-1}}=0.
\]
The domain $\mathcal D(A_{-1}^\star)$ is dense in $V_{-1}$, so
$D\bar x=A\bar x+B\bar u$ in $\DD'(I;V_{-1})$. The right-hand side belongs
to $L^2(I;V_{-1})$, because $A\in\mathcal L(X,V_{-1})$,
$B\in\mathcal L(U,V_{-1})$, and
$\bar x\in C(I;W)\hookrightarrow L^2(I;X)$. Consequently,
\begin{equation}\label{eq:record-dynamics}
  \bar x\in H^1(I;V_{-1}),
  \qquad
  \dot{\bar x}=A\bar x+B\bar u
  \quad\text{in }L^2(I;V_{-1}).
\end{equation}

\emph{Step 2: identification of $\im\Gamma$.} Set $\bar z:=(\bar u,\bar x)$
and $\bar v:=(\bar u,\bar x,\dot{\bar x},\bar y)$. By
\eqref{eq:record-dynamics} and $\bar y=C\bar x$, the record satisfies
$\bar v=\Gamma\bar z$ almost everywhere, and informativity gives
$\cspan\bar z=U\times W$. Since $\Gamma$ is a homeomorphism onto its closed
range, the kernel identity for the Gramian
\eqref{eq:hilbert-gramian} yields
\[
  (\ker\mathcal G)^\perp
    =\cspan\bar v
    =\cspan(\Gamma\bar z)
    =\Gamma(\cspan\bar z)
    =\Gamma(U\times W)
    =\im\Gamma .
\]
For the synthesis operator, \eqref{eq:record-dynamics} and integration by
parts give
\begin{equation}\label{eq:synthesis-factorization}
  \mathcal W_{(\bar u,\bar x,\bar y)}\phi
  =\Gamma\left(
    \int_0^T\phi\,\bar u\,\d t,
    \int_0^T\phi\,\bar x\,\d t
  \right),
  \qquad \phi\in H_0^1(I),
\end{equation}
the operators $A$, $B$ and $C$ passing through the Bochner integrals because
they are bounded between the relevant spaces. Hence
$\im\mathcal W_{(\bar u,\bar x,\bar y)}=\Gamma(\mathcal N)$ for the subspace

\[
  \mathcal N:=\Bigl\{\Bigl(\int_0^T\phi\,\bar u\,\d t,
    \int_0^T\phi\,\bar x\,\d t\Bigr):\phi\in H_0^1(I)\Bigr\}
  \subseteq U\times W .
\]
If $(v,\xi)\in\mathcal N^\perp$, then
\[
  \int_0^T\phi(t)
  \bigl(
    \angl{v}{\bar u(t)}_U+\angl{\xi}{\bar x(t)}_W
  \bigr)\,\d t=0
  \qquad \forall\phi\in H_0^1(I),
\]
and since $H_0^1(I)$ is dense in $L^2(I)$ the scalar function in parentheses
vanishes almost everywhere; informativity then gives $(v,\xi)=0$. Thus
$\mathcal N$ is dense in $U\times W$ and, $\Gamma$ being a homeomorphism onto
its closed range,
$\overline{\im\mathcal W_{(\bar u,\bar x,\bar y)}}
=\Gamma(\overline{\mathcal N})=\Gamma(U\times W)=\im\Gamma$. This proves
\eqref{eq:graph-identification}.

\emph{Step 3: the equivalences.} Put $g:=Ax+Bu$, which lies in
$L^2(I;V_{-1})$ because $x\in C(I;W)\hookrightarrow L^2(I;X)$.

\ref{item:wfl-dynamics}$\Rightarrow$\ref{item:wfl-pointwise}: since
$\dot x=g$ and $y=Cx$ almost everywhere,
$(u,x,\dot x,y)(t)=\Gamma(u(t),x(t))\in\im\Gamma=(\ker\mathcal G)^\perp$ for a.e.\ $t$.

\ref{item:wfl-pointwise}$\Rightarrow$\ref{item:wfl-synthesis}:  $\mathcal W_{(u,x,y)}\phi$ is the Bochner
integral of $\phi(t)\,(u,x,\dot x,y)(t)$ over $I$. Its integrand takes values
in the closed subspace $(\ker\mathcal G)^\perp$ for a.e.\ $t$, hence so does the
integral.

\ref{item:wfl-synthesis}$\Rightarrow$\ref{item:wfl-dynamics}: fix
$\phi\in H_0^1(I)$. By \eqref{eq:graph-identification},
$\mathcal W_{(u,x,y)}\phi\in\im\Gamma$, and since the first two components of
$\Gamma(v,\xi)$ are $(v,\xi)$, its preimage must be
$(\int_0^T\phi\,u\,\d t,\int_0^T\phi\,x\,\d t)$. Comparing third components
gives
\[
  -\int_0^T\dot\phi\,x\,\d t
  =A\int_0^T\phi\,x\,\d t
    +B\int_0^T\phi\,u\,\d t
  =\int_0^T\phi\,g\,\d t ,
\]
so $Dx=g$ in $\DD'(I;V_{-1})$; since $x\in H^1(I;V_{-1})$ and
$g\in L^2(I;V_{-1})$, this is the identity $\dot x=g$ in $L^2(I;V_{-1})$.
Comparing fourth components gives
$\int_0^T\phi(t)\bigl(y(t)-Cx(t)\bigr)\,\d t=0$ for every $\phi\in H_0^1(I)$,
and pairing with arbitrary elements of $Y$ together with the density of
$H_0^1(I)$ in $L^2(I)$ shows that $y=Cx$ in $L^2(I;Y)$.

\emph{Step 4: the mild solution.} Whenever these conditions hold,
$x(t)\in W\subset V=\mathcal D(A_{-1})$ and $x\in H^1(I;V_{-1})$, so $x$ is a
strong solution on $V_{-1}$ and the variation-of-constants formula gives
$x(t)=S(t)x(0)+\int_0^tS(t-r)Bu(r)\,\d r$. By consistency of
the extrapolated semigroup and Assumption~\ref{hyp:H1}, this is precisely
\eqref{eq:hilbert-mild-state}; together with $y=Cx$ it follows that $(x,y)$
is the mild solution \eqref{eq:hilbert-mild} generated by $(x(0),u)$.
\qed

\subsection{Input--output parametrization and proof of Theorem~\ref{thm:windowed-io-fundamental-lemma}}
\label{app:windowed-io-fundamental-lemma}

\begin{lem}
  \label{lem:io-parametrization}
  Suppose that Assumptions~\ref{hyp:H1} and~\ref{hyp:H2} hold and let $T>0$.
  The \emph{parametrization}
  \[
    \begin{aligned}
      \mathcal M_T:L^2(0,T;U)\times X&\to L^2(0,T;U)\times L^2(0,T;Y),\\
      (u,x_0)&\mapsto\bigl(u,\mathcal F_Tu+\mathcal O_Tx_0\bigr),
    \end{aligned}
  \]
  is bounded, and $\im\mathcal M_T=\mathcal B_T^{u,y}$. If $(A,C)$ is approximately observable in
  time $T$, then $\mathcal M_T$ is injective. If $(A,C)$ is exactly observable
  in time $T$, then $\mathcal M_T$ is bounded below, hence a homeomorphism onto
  its closed range $\mathcal B_T^{u,y}$, and the latent state depends
  continuously on the pair, through
  \begin{equation}\label{eq:io-state-recovery}
    x_0=(\mathcal O_T^\star\mathcal O_T)^{-1}\mathcal O_T^\star
      (y-\mathcal F_Tu) .
  \end{equation}
  Moreover, if $(\bar u,\bar x,\bar y)$ is a record on
  $[0,T+T']$, then
  \begin{equation}\label{eq:window-factorization}
    \mathcal Y_{T,T'}^{\bar u,\bar y}
      =\mathcal M_T\,\mathcal Z_{T,T'}^{\bar u,\bar x} .
  \end{equation}
\end{lem}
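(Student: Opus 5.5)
Each assertion of Lemma~\ref{lem:io-parametrization} follows almost directly from Lemma~\ref{lem:finite-horizon-output} and the definitions; I would check them in the order stated. \emph{Boundedness and range.} Lemma~\ref{lem:finite-horizon-output} (with $\tau=T$) gives $\mathcal F_T\in\mathcal L(L^2(0,T;U),L^2(0,T;Y))$ and $\mathcal O_T\in\mathcal L(X,L^2(0,T;Y))$. Hence $\mathcal M_T$ is bounded, with norm at most $1+\|\mathcal F_T\|+\|\mathcal O_T\|$. The equality $\im\mathcal M_T=\mathcal B_T^{u,y}$ is a restatement of Definition~\ref{def:io-behavior}.

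\emph{Injectivity and lower bound.} If $\mathcal M_T(u,x_0)=0$, the first component gives $u=0$, and then $\mathcal O_Tx_0=0$. Approximate observability ($\ker\mathcal O_T=\{0\}$) therefore yields $x_0=0$. Under exact observability I would estimate, for any $(u,x_0)$,
\[
  c_T\|x_0\|_X\le\|\mathcal O_Tx_0\|
  \le\|y-\mathcal F_Tu\|+\ldots\le\|y\|+\|\mathcal F_T\|\,\|u\|,
  \qquad y:=\mathcal F_Tu+\mathcal O_Tx_0 .
\]
Combining this with $\|u\|\le\|\mathcal M_T(u,x_0)\|$ gives $\|(u,x_0)\|\le C\|\mathcal M_T(u,x_0)\|$. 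An operator that is bounded below has closed range and is a homeomorphism onto it, so $\mathcal B_T^{u,y}$ is closed.

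\emph{Recovery formula.} Exact observability makes $\mathcal O_T^\star\mathcal O_T$ coercive on $X$, since $\angl{\mathcal O_T^\star\mathcal O_Tx}{x}_X\ge c_T^2\|x\|_X^2$. By Lax--Milgram it is therefore boundedly invertible. Applying $\mathcal O_T^\star$ to $y-\mathcal F_Tu=\mathcal O_Tx_0$ and inverting gives \eqref{eq:io-state-recovery}.

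\emph{Window factorization.} Fix $\phi\in L^2(0,T')$. The shift identity of Lemma~\ref{lem:finite-horizon-output}, with $\rho=T$ and $s\in[0,T']$, gives
\[
  (\bar u(s+\cdot),\bar y(s+\cdot))=\mathcal M_T(\bar u(s+\cdot),\bar x(s)).
\]
Multiplying by $\phi(s)$ and integrating over $(0,T')$, the bounded operator $\mathcal M_T$ passes through the Bochner integral, which gives $\mathcal Y_{T,T'}^{\bar u,\bar y}\phi=\mathcal M_T\mathcal Z_{T,T'}^{\bar u,\bar x}\phi$.

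The main obstacle is justifying that these are genuine Bochner integrals. One must show that $s\mapsto(\bar u(s+\cdot),\bar x(s))$ is measurable and in $L^2(0,T';L^2(0,T;U)\times X)$. For the input component this follows from continuity of translations in $L^2$. For the state component it follows because $\bar x\in C([0,T+T'];X)$. Once the integrand is square integrable, $\phi$ times it is integrable, and bounded operators commute with the integral.
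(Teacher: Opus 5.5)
Your proposal is correct and follows the paper's proof essentially step by step. Boundedness and the range come from Lemma~\ref{lem:finite-horizon-output} and Definition~\ref{def:io-behavior}; exact observability gives the same lower bound $c_T\|x_0\|_X\le\|y\|+\|\mathcal F_T\|\,\|u\|$, and $\mathcal O_T^\star\mathcal O_T\ge c_T^2$ is invertible; the factorization follows by passing $\mathcal M_T$ through the Bochner integral via the shift identity. You also check that the window integrand is continuous, hence Bochner integrable against $\phi\in L^2(0,T')$, which the paper leaves implicit. The middle step of your displayed chain is an equality, $\|\mathcal O_Tx_0\|=\|y-\mathcal F_Tu\|$, so the stray ``$+\ldots$'' should be dropped.
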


\begin{proof}[Proof of Lemma~\ref{lem:io-parametrization}]
  Boundedness of $\mathcal M_T$ is that of $\mathcal F_T$ and $\mathcal O_T$
  in Lemma~\ref{lem:finite-horizon-output}, and
  $\im\mathcal M_T=\mathcal B_T^{u,y}$ is Definition~\ref{def:io-behavior}
  read off componentwise: $(u,y)\in\mathcal B_T^{u,y}$ if and only if
  $y=\mathcal O_Tx_0+\mathcal F_Tu$ for some $x_0\in X$, that is, if and only
  if $(u,y)=\mathcal M_T(u,x_0)$.

  If $\mathcal M_T(u,x_0)=0$, the first component gives $u=0$ and the second
  then gives $\mathcal O_Tx_0=0$; approximate observability makes
  $\mathcal O_T$ injective, so $x_0=0$. Under exact observability, write
  $(u,y):=\mathcal M_T(u,x_0)$; then $\|u\|\le\|(u,y)\|$ and
  $\mathcal O_Tx_0=y-\mathcal F_Tu$, so
  \[
    c_T\|x_0\|_X
    \le\|\mathcal O_Tx_0\|_{L^2(0,T;Y)}
    \le\|y\|_{L^2(0,T;Y)}+\|\mathcal F_T\|\,\|u\|_{L^2(0,T;U)}
    \le\bigl(1+\|\mathcal F_T\|\bigr)\|(u,y)\| ,
  \]
  which bounds $\|(u,x_0)\|$ by a multiple of $\|\mathcal M_T(u,x_0)\|$. A
  bounded-below operator on a Hilbert space has closed range and is a
  homeomorphism onto it; exact observability also makes
  $\mathcal O_T^\star\mathcal O_T\ge c_T^2$ boundedly invertible, whence
  \eqref{eq:io-state-recovery}.

  Finally, both window synthesis operators integrate against the same
  $\phi\in L^2(0,T')$, and
  $(\bar u(s+\cdot),\bar y(s+\cdot))=\mathcal M_T(\bar u(s+\cdot),\bar x(s))$ for almost every
  $s\in(0,T')$ by the shift identity of
  Lemma~\ref{lem:finite-horizon-output}. Since $\mathcal M_T$ is linear and
  bounded, it passes through the Bochner integral, so
  \[
    \mathcal Y_{T,T'}^{\bar u,\bar y}\phi
    =\int_0^{T'}\phi(s)\,\mathcal M_T(\bar u(s+\cdot),\bar x(s))\,\d s
    =\mathcal M_T\int_0^{T'}\phi(s)(\bar u(s+\cdot),\bar x(s))\,\d s
    =\mathcal M_T\,\mathcal Z_{T,T'}^{\bar u,\bar x}\phi . \qedhere
  \]
\end{proof}

\begin{proof}[Proof of Theorem~\ref{thm:windowed-io-fundamental-lemma}]
  By the factorization \eqref{eq:window-factorization} of
  Lemma~\ref{lem:io-parametrization},
  \[
    \im\mathcal Y_{T,T'}^{\bar u,\bar y}
    =\mathcal M_T\bigl(\im\mathcal Z_{T,T'}^{\bar u,\bar x}\bigr)
    \subseteq\im\mathcal M_T
    =\mathcal B_T^{u,y},
  \]
  so $\overline{\im\mathcal Y_{T,T'}^{\bar u,\bar y}}
  \subseteq\overline{\mathcal B_T^{u,y}}$. Conversely, $\mathcal M_T$ is
  bounded, hence
  \[
    \mathcal M_T\Bigl(
      \overline{\im\mathcal Z_{T,T'}^{\bar u,\bar x}}\Bigr)
    \subseteq
    \overline{\mathcal M_T\bigl(\im\mathcal Z_{T,T'}^{\bar u,\bar x}\bigr)}
    =\overline{\im\mathcal Y_{T,T'}^{\bar u,\bar y}} .
  \]
  Window informativity gives
  $\overline{\im\mathcal Z_{T,T'}^{\bar u,\bar x}}=L^2(0,T;U)\times X$, so
  the left-hand side is $\im\mathcal M_T=\mathcal B_T^{u,y}$ and therefore
  $\overline{\mathcal B_T^{u,y}}
  \subseteq\overline{\im\mathcal Y_{T,T'}^{\bar u,\bar y}}$. The two
  inclusions give
  $\overline{\im\mathcal Y_{T,T'}^{\bar u,\bar y}}
  =\overline{\mathcal B_T^{u,y}}$, which is
  \eqref{eq:io-behavior-identification}.
\end{proof}

\subsection{Proof of Theorem~\ref{thm:data-fattorini-hautus} (data tests for approximate controllability)}
\label{app:data-fattorini-hautus}
We work in the complexified spaces. For $\eta\in\mathcal D(A^\star)_{\CC}$,
set $y_\eta(t):=\angl{\eta}{\bar x(t)}_X$. By Assumption~\ref{hyp:H1}, the
graph-boundedness of $B^\star$, and the weak formulation of the mild solution
\parencite{ballStronglyContinuousSemigroups1977}, $y_\eta\in H^1(I;\CC)$ and
\begin{equation}\label{eq:weak-mild-diff}
  \dot y_\eta(t)
    =\angl{A^\star\eta}{\bar x(t)}_X
     +\angl{B^\star\eta}{\bar u(t)}_U
  \qquad\text{for a.e.\ }t\in I.
\end{equation}

\emph{Step 1: obstructions produce exponential identities.} Suppose that
$A^\star\eta=\lambda\eta$ and $B^\star\eta=0$ for some $\lambda\in\CC$ and
$\eta\in\mathcal D(A^\star)_{\CC}\setminus\{0\}$. Then
\eqref{eq:weak-mild-diff} gives $\dot y_\eta=\lambda y_\eta$, and hence
\begin{equation}\label{eq:data-fh-exponential}
  \angl{\eta}{\bar x(t)}_X=\angl{\eta}{x_0}_X\,e^{\lambda t}
  \qquad(t\in I).
\end{equation}
Informativity plays no role here. If $(A,B)$ is not approximately
controllable, Proposition~\ref{prop:fattorini-hautus} provides such a pair
$(\lambda,\eta)$, and \eqref{eq:data-fh-exponential} is \eqref{eq:data-fh}
with $\kappa=\angl{\eta}{x_0}_X$. Since
$\mathcal D(A^\star)_{\CC}\subset X_{\CC}$, contraposition proves
\ref{item:data-fh-certificate} and the ``if'' part of
\ref{item:data-fh-exact}.

\emph{Step 2: informativity turns them into obstructions.} Recall from
Definition~\ref{def:hilbert-informative} that informativity rules out
annihilators $(\alpha,\beta)\in U_{\CC}\times X_{\CC}$ of the record as
well. Suppose that
\eqref{eq:data-fh} holds for some $\lambda,\kappa\in\CC$ and
$\eta\in\mathcal D(A^\star)_{\CC}\setminus\{0\}$. Differentiating it and
using the complex-bilinear convention for the pairing gives
$\dot y_\eta=\lambda y_\eta=\angl{\lambda\eta}{\bar x}_X$ a.e.\@ Subtracting
\eqref{eq:weak-mild-diff} therefore gives
\[
  \angl{A^\star\eta-\lambda\eta}{\bar x(t)}_X
  +\angl{B^\star\eta}{\bar u(t)}_U=0
  \qquad\text{for a.e.\ }t\in I.
\]
Applying informativity with
$\alpha:=B^\star\eta\in U_{\CC}$ and
$\beta:=A^\star\eta-\lambda\eta\in X_{\CC}$ yields
\[
  B^\star\eta=0,
  \qquad
  A^\star\eta=\lambda\eta.
\]
By Proposition~\ref{prop:fattorini-hautus}, $(A,B)$ is not approximately
controllable, which is the ``only if'' part of \ref{item:data-fh-exact}.

\emph{Step 3: the constant.} Evaluating \eqref{eq:data-fh} at $t=0$ gives
$\kappa=\angl{\eta}{x_0}_X$. If $\kappa=0$, then $(0,\eta)\ne0$ annihilates
the record, contrary to informativity. Hence
$\kappa\ne0$.
\qed

\subsection{Proof of Theorem~\ref{thm:io-window-controllability} (input--output controllability tests)}
\label{app:io-window-controllability}
We work in the complexified spaces and write
\[
  \ell_{v,g}(u,y)
    :=\int_0^T\angl{v(t)}{u(t)}_U\,\d t
     +\int_0^T\angl{g(t)}{y(t)}_Y\,\d t
\]
for the bounded window functional on the left-hand side of
\eqref{eq:io-window-fh}, so that the identity reads
$\ell_{v,g}(\bar u(s+\cdot),\bar y(s+\cdot))=\kappa e^{\lambda s}$. Pulling $\ell_{v,g}$ back through
the parametrization $\mathcal M_T$ of Lemma~\ref{lem:io-parametrization} gives
\begin{equation}\label{eq:io-window-pullback}
  \ell_{v,g}\bigl(\mathcal M_T(u,x)\bigr)
    =\angl{\alpha}{u}_{L^2(0,T;U)}+\angl{\eta}{x}_X,
  \quad
  \alpha:=v+\mathcal F_T^\star g,
  \quad \eta:=\mathcal O_T^\star g .
\end{equation}
Moreover, if $\bar x$ denotes the state trajectory of the record, then
Lemma~\ref{lem:finite-horizon-output} gives the shift identity
$(\bar u(s+\cdot),\bar y(s+\cdot))=\mathcal M_T(\bar u(s+\cdot),\bar x(s))$ for every
admissible $s$.

\emph{Step 1: obstructions produce exponential identities.} Suppose that
$(A,B)$ is not approximately controllable, and use
Proposition~\ref{prop:fattorini-hautus} to choose $\lambda\in\CC$ and
$\eta\in\mathcal D(A^\star)_{\CC}\setminus\{0\}$ with
$A^\star\eta=\lambda\eta$ and $B^\star\eta=0$. Exact observability in time
$T$ makes $\mathcal O_T^\star$ onto, so there is $g\in L^2(0,T;Y_{\CC})$
with $\mathcal O_T^\star g=\eta$, and $g\ne0$ because $\eta\ne0$. Choosing
$v:=-\mathcal F_T^\star g$ makes $\alpha=0$ in
\eqref{eq:io-window-pullback}, so the shift identity and
\eqref{eq:data-fh-exponential} yield
\begin{equation}\label{eq:io-window-projection}
  \ell_{v,g}(\bar u(s+\cdot),\bar y(s+\cdot))
    =\angl{\eta}{\bar x(s)}_X
    =\angl{\eta}{x_0}_X\,e^{\lambda s}
\end{equation}
for every admissible $s$. Thus \eqref{eq:io-window-fh} holds with
$\kappa:=\angl{\eta}{x_0}_X$ and $g\ne0$. Only the record and the shift
identity are used; Step 2 discharges the remaining requirement $\kappa\ne0$.

\emph{Step 2: informativity forces $\kappa\ne0$.} Let $(\lambda,\kappa,v,g)$
be as in Step 1 and suppose that $\kappa=\angl{\eta}{x_0}_X=0$. Since
$A^\star\eta=\lambda\eta$ and $B^\star\eta=0$, identity
\eqref{eq:data-fh-exponential} makes $\angl{\eta}{\bar x(t)}_X$ vanish for
every $t$ in the record interval, so $(0,\eta)\ne0$ annihilates
$(\bar u,\bar x)$ there. Under the hypothesis of
\ref{item:io-fh-certificate} this contradicts informativity on $(0,T+T')$
outright; under the hypothesis of \ref{item:io-fh-exact} it contradicts
informativity on $(0,3T+T')$, which window informativity at horizon $3T$
supplies through Lemma~\ref{lem:window-implies-informative}. Hence
$\kappa\ne0$, and contraposition of Step 1 proves
\ref{item:io-fh-certificate} and the ``if'' part of
\ref{item:io-fh-exact}.

\emph{Step 3: exponential identities produce obstructions.} Here
$(\bar u,\bar x)$ is window informative at horizon $3T$ on $[0,3T+T']$, so
that Theorem~\ref{thm:windowed-io-fundamental-lemma} gives
\begin{equation}\label{eq:io-window-behavior}
  \overline{\im\mathcal Y_{3T,T'}^{\bar u,\bar y}}
    =\overline{\mathcal B_{3T}^{u,y}} .
\end{equation}
Suppose that \eqref{eq:io-window-fh} holds for every $s\in(0,2T+T')$, with
$\kappa\ne0$. Write $u_r:=u(r+\cdot)|_{(0,T)}$ and
$y_r:=y(r+\cdot)|_{(0,T)}$. For $r\in(T,2T)$, the functional
\[
  D_r(u,y):=\ell_{v,g}(u_r,y_r)-e^{\lambda r}\ell_{v,g}(u_0,y_0)
\]
annihilates every measured length-$3T$ window. Identity
\eqref{eq:io-window-behavior} and continuity therefore imply that $D_r$ annihilates
$\mathcal B_{3T}^{u,y}$, so every length-$3T$ trajectory satisfies
\begin{equation}\label{eq:io-window-shift-eigenrelation}
  \ell_{v,g}(u_r,y_r)=e^{\lambda r}\ell_{v,g}(u_0,y_0),
  \qquad r\in(T,2T).
\end{equation}
In \eqref{eq:io-window-shift-eigenrelation}, first choose zero initial state
and an arbitrary input supported in $(r,r+T)$, then choose zero input and an
arbitrary initial state, and finally choose zero initial state and arbitrary
inputs supported in $(0,r)$. With $\alpha$ and $\eta$ as in
\eqref{eq:io-window-pullback}, these three choices yield, respectively,
\[
  \alpha=0,\qquad
  S(r)^\star\eta=e^{\lambda r}\eta,\qquad
  L_r^\star\eta=0,
  \qquad r\in(T,2T).
\]
Moreover, $\eta\ne0$; otherwise \eqref{eq:io-window-pullback} and
$\alpha=0$ would make $\ell_{v,g}$ vanish on every admissible window,
contrary to \eqref{eq:io-window-fh} and $\kappa\ne0$. In particular
$g\ne0$, which is the last assertion of \ref{item:io-fh-exact}.

For $0<\tau<T$, choose $r\in(T,2T-\tau)$. Applying $S(\tau)^\star$
to the identity at $r$ and comparing with the identity at $r+\tau$
gives $S(\tau)^\star\eta=e^{\lambda\tau}\eta$. The semigroup property
extends this to every $\tau\ge0$. Hence
$\eta\in\mathcal D(A^\star)_{\CC}$ and
$A^\star\eta=\lambda\eta$. Since $L_r^\star\eta=0$, duality gives, for
arbitrary $u$,
\[
  0=\angl{\eta}{L_ru}_X
    =\int_0^r e^{\lambda(r-t)}
      \angl{B^\star\eta}{u(t)}_U\,\d t,
\]
and thus $B^\star\eta=0$. Proposition~\ref{prop:fattorini-hautus} shows that
$(A,B)$ is not approximately controllable, which is the ``only if'' part of
\ref{item:io-fh-exact}.
\qed

\subsection{Proof of Theorem~\ref{thm:lqr-data} (data-driven LQR)}
\label{app:lqr-data}
Define the affine control-to-trajectory map
\[
  \Lambda u:=
  \bigl(u,x(\cdot\,;x_0,u),Cx(\cdot\,;x_0,u)\bigr).
\]
Assumption~\ref{hyp:H1} and the regularity argument in the proof of
Theorem~\ref{thm:willems-gramian} give
\(x(\cdot\,;x_0,u)\in C(I;W)\cap H^1(I;V_{-1})\), and
\[
  \mathcal J(\Lambda u)=J(u;x_0).
\]
The equivalence \ref{item:wfl-dynamics}$\Leftrightarrow$\ref{item:wfl-pointwise}
of Theorem~\ref{thm:willems-gramian} shows that
\(\Lambda\) maps \(L^2(I;U)\) bijectively onto
\(\mathcal T_{\mathcal G}(x_0)\): every mild trajectory satisfies the pointwise graph
constraint, and every triple satisfying that constraint is the mild
trajectory generated by its input and the prescribed initial state.

Hence the data-driven and model-based problems have the same feasible
trajectories and objective values. The model-based cost is a continuous
quadratic functional of \(u\); its quadratic part is coercive because
\[
  J(u;0)\ge
  \int_0^T\angl{u(t)}{Ru(t)}_U\,\d t
  \ge\eps_R\|u\|_{L^2(I;U)}^2.
\]
It therefore has a unique minimizer, which proves the first assertion. Under
Assumptions~\ref{hyp:H2} and~\ref{hyp:compat}, the feedback formula and value
follow from Theorem~\ref{thm:lqr-riccati}.
\qed

\subsection{Proof of Theorem~\ref{thm:lqr-data-io} (input--output data-driven LQR)}
\label{app:lqr-data-io}
Write $\rho:=T_0+T$ and let $\mathcal M_\rho$ be the parametrization of
Lemma~\ref{lem:io-parametrization} at that length.

\emph{Step 1: the feasible set is the behavior.} Since
$(\mathcal O_\rho x)\vert_{(0,T_0)}=\mathcal O_{T_0}x$ for every $x\in X$,
exact observability in time $T_0$ implies exact observability in time $\rho$
with the same constant. Together with Assumption~\ref{hyp:H2}, this makes
$\mathcal M_\rho$ bounded below, hence injective with closed range
$\mathcal B_\rho^{u,y}$ (Lemma~\ref{lem:io-parametrization}). Window
informativity at horizon $\rho$ and
Theorem~\ref{thm:windowed-io-fundamental-lemma} then give
\[
  \overline{\im\mathcal Y_{\rho,T'}^{\bar u,\bar y}}
  =\overline{\mathcal B_\rho^{u,y}}
  =\mathcal B_\rho^{u,y} ,
\]
so the feasible set of \eqref{eq:lqr-data-io-problem} is
\[
  \mathcal A=\bigl\{\mathcal M_\rho(v,\xi)\ :\
    (v,\xi)\in L^2(0,\rho;U)\times X,\quad
    \mathcal M_\rho(v,\xi)\vert_{(0,T_0)}=(u_{\rm ini},y_{\rm ini})\bigr\} .
\]

\emph{Step 2: the conditioning window fixes the state.} Because
$(\mathcal F_\rho v)(t)=CL_tv$ depends only on $v\vert_{(0,t)}$, restriction
to $(0,T_0)$ gives
$\mathcal M_\rho(v,\xi)\vert_{(0,T_0)}=\mathcal M_{T_0}(v\vert_{(0,T_0)},\xi)$.
Hence $\mathcal M_\rho(v,\xi)\in\mathcal A$ if and only if
$v\vert_{(0,T_0)}=u_{\rm ini}$ and
$\mathcal O_{T_0}\xi=\mathcal O_{T_0}x_{\rm ini}$, the latter because
$(u_{\rm ini},y_{\rm ini})=\mathcal M_{T_0}(u_{\rm ini},x_{\rm ini})$ and the
forced responses cancel. Exact observability in time $T_0$ makes
$\mathcal O_{T_0}$ injective, so $\xi=x_{\rm ini}$. Consequently
$\mathcal A\neq\emptyset$ and, by the injectivity of $\mathcal M_\rho$, the
map $v\mapsto\mathcal M_\rho(v,x_{\rm ini})$ is a bijection from
$\{v\in L^2(0,\rho;U):v\vert_{(0,T_0)}=u_{\rm ini}\}$ onto $\mathcal A$. Since the
conditioning part of $v$ is prescribed, that set is itself parametrized
bijectively by the free part $v(T_0+\cdot)\in L^2(0,T;U)$.

\emph{Step 3: the cost is the model-based cost.} Let
$(v,z)=\mathcal M_\rho(v,x_{\rm ini})\in\mathcal A$ and let $x$ be the mild
solution generated by $(x_{\rm ini},v)$. Then
$x(T_0)=x(T_0;x_{\rm ini},u_{\rm ini})=x_0$ by causality, and $x_0\in W$
because $x_{\rm ini}\in W$, by \eqref{eq:hilbert-mild}. The shift identity of
Lemma~\ref{lem:finite-horizon-output} gives
\[
  z(T_0+\cdot)=\mathcal O_Tx_0+\mathcal F_Tv(T_0+\cdot) ,
\]
that is, the regulated part of the window is the output generated on $[0,T]$
by the initial state $x_0$ and the input $v(T_0+\cdot)$. Since $G=0$, comparing
the objective of \eqref{eq:lqr-data-io-problem} with \eqref{eq:lqr-cost}
yields that the cost of $(v,z)$ equals $J(v(T_0+\cdot);x_0)$.

\emph{Step 4: conclusion.} By Steps 2 and 3, problem
\eqref{eq:lqr-data-io-problem} is the model-based problem \eqref{eq:lqr-cost}
for the initial state $x_0$, transported by the bijection
$(v,z)\mapsto v(T_0+\cdot)$. As in the proof of Theorem~\ref{thm:lqr-data}, the
functional $J(\cdot\,;x_0)$ is continuous and quadratic with coercive
quadratic part, so it has a unique minimizer $u_F$. Pulling $u_F$ back through
the bijection gives the unique minimizer $(u^\star,y^\star)$ of
\eqref{eq:lqr-data-io-problem}, namely the input $u^\star$ with
$u^\star\vert_{(0,T_0)}=u_{\rm ini}$ and $u^\star(T_0+\cdot)=u_F$, together with
$y^\star=\mathcal O_\rho x_{\rm ini}+\mathcal F_\rho u^\star$; its regulated
part is the input--output pair generated by $(x_0,u_F)$. Finally, since
$x_0\in W$, Assumptions~\ref{hyp:H1}, \ref{hyp:H2}, and~\ref{hyp:compat}
and Theorem~\ref{thm:lqr-riccati} evaluate the common optimal value as
$J(u_F;x_0)=\angl{x_0}{P(0)x_0}_{V,V'}$.
\qed

\clearpage
\begingroup
\setlength{\emergencystretch}{1em}
\printbibliography
\endgroup

\end{document}